\documentclass[a4paper,11pt]{article}

\usepackage{graphicx,enumitem}
\usepackage{color}
\usepackage{latexsym}
\usepackage{amsmath, amssymb, mathrsfs}
\usepackage{algorithmic}
\usepackage{algorithm}

\usepackage{libertine}
\usepackage{soul,xcolor}
\usepackage[colorlinks]{hyperref}
\usepackage{tikz}
\usetikzlibrary{arrows.meta}
\usetikzlibrary{calc}
\usepackage{subcaption} 
\setlength{\belowcaptionskip}{-5pt}
\usepackage{amsfonts,amsmath,amssymb}
\usepackage{amsthm}
\usepackage{geometry}
\usepackage{graphicx} 
\usepackage{xcolor}
\usepackage{hyperref}
\usepackage{footnote}
\usepackage{stackrel}
\usepackage{xfrac}
\usepackage[tikz]{mdframed}

\newtheorem{theorem}{Theorem}[section]
\newtheorem{definition}{Definition}   
\newtheorem{remark}{Remark}
\newtheorem{proposition}[theorem]{Proposition}
\newtheorem{lemma}[theorem]{Lemma}
\usepackage{amsmath}
\usepackage{mathtools}
\usepackage{mathrsfs}
\usepackage{amssymb}
\usepackage{multicol}
\usepackage[colorlinks]{hyperref}
\usepackage{caption,subcaption}
\captionsetup[subfigure]{labelfont=rm}
\usepackage{bm}
\newcommand{\vect}[1]{\boldsymbol{\mathbf{#1}}}
\newcommand{\vertiii}[1]{{\left\vert\kern-0.25ex\left\vert\kern-0.25ex\left\vert #1 
    \right\vert\kern-0.25ex\right\vert\kern-0.25ex\right\vert}}
\newtheorem{problem}{Problem}
\newtheorem{corollary}[theorem]{Corollary}
\newtheorem{example}{Example}


\newcommand{\dotu}{\dot{u}}
%
\newcommand{\un}{u_\text{N}}
\newcommand{\ud}{u_\text{D}}
\newcommand{\cv}{\overline{v}}
\newcommand{\cu}{\overline{u}}
\newcommand{\cphi}{\overline{\varphi}}
\newcommand{\ut}{u^t}
\newcommand{\ur}{u_{1}}
\newcommand{\ui}{u_{2}}
\newcommand{\vr}{p_{1}}
\newcommand{\vi}{p_{2}}
\newcommand{\up}{u'}

\newcommand{\uip}{u_{2}'}

\newcommand{\aaa}{\vect{\mathsf{a}}}
\newcommand{\aat}{\vect{\mathsf{a}}_t}
\newcommand{\GG}{G} 
\newcommand{\LL}{\vect{\mathsf{L}}}
\newcommand{\QQ}{\vect{\mathsf{Q}}}
\newcommand{\SSig}{\vect{\mathsf{S}}}
\newcommand{\sfPsi}{{\mathcal{F}}}
\newcommand{\sfTheta}{\vect{\Theta}}
\newcommand{\sfj}{\mathsf{j}}

\newcommand{\HH}{\vect{\mathsf{H}}}
\newcommand{\HHg}{\vect{\mathsf{V}}}

\newcommand{\VV}{\vect{V}}
\newcommand{\Vn}{V_{n}}
\newcommand{\Wn}{W_\text{n}}
\newcommand{\WW}{\vect{W}}
\newcommand{\svv}{\vect{v}}
\newcommand{\ww}{\vect{w}}
\newcommand{\vn}{v_\text{n}}
\newcommand{\svvs}{\vect{v}_{\Sigma}}

\newcommand{\wws}{\vect{w}_{\Sigma}} 
\newcommand{\nn}{\vect{n}}

\newcommand{\dn}[1]{\partial_{\nn}{#1}}

\newcommand{\intOast}[1]{\int_{\Omega^{\ast}}{#1}{\, {\operatorname{\mathnormal{d}}} x}}
\newcommand{\intO}[1]{\int_{\Omega}{#1}{\, {\operatorname{\mathnormal{d}}} x}}
\newcommand{\intdO}[1]{\int_{\partial\Omega}{#1}{\, {\operatorname{\mathnormal{d}}} \sigma}}
\newcommand{\intOt}[1]{\int_{\Omega_t}{#1}{\, {\operatorname{\mathnormal{d}}} x_t}} 

\newcommand{\intS}[1]{\int_{\Sigma}{#1}{\, {\operatorname{\mathnormal{d}}} \sigma}} 
\newcommand{\intSast}[1]{\int_{\Sigma^\ast}{#1}{\, {\operatorname{\mathnormal{d}}} \sigma}}
\newcommand{\intSt}[1]{\int_{\Sigma_t}{#1}{\, {\operatorname{\mathnormal{d}}} \sigma_t}}

%
%
%

\begin{document}

\title{On the new coupled complex boundary method in shape optimization framework for solving stationary free boundary problems\footnote{Some of the results in this paper were presented at the SAB-ATAN 2021: International  Conference on Mathematical Sciences and Applications held virtually on November 24-26, 2021 and hosted by the University of the Philippines Baguio.}\\
\medskip
}
\author{Julius Fergy T. Rabago}

\date{%
	{\footnotesize
	 Faculty of Mathematics and Physics\\%
	 Institute of Science and Engineering\\%
         Kanazawa University, Kanazawa 920-1192, Japan\\\vspace{-2pt}
        \texttt{rabagojft@se.kanazawa-u.ac.jp,\ jfrabago@gmail.com}}\\[2ex]
    \today
}

\maketitle

\begin{abstract}
We expose here a novel application of the so-called coupled complex boundary method -- first put forward by Cheng et al. (2014) to deal with inverse source problems -- in the framework of shape optimization for solving the exterior Bernoulli problem, a prototypical model of stationary free boundary problems.
The idea of the method is to transform the overdetermined problem to a complex boundary value problem with a complex Robin boundary condition coupling the Dirichlet and Neumann boundary conditions on the free boundary.
Then, we optimize the cost function constructed by the imaginary part of the solution in the whole domain in order to identify the free boundary.
We also prove the existence of the shape derivative of the complex state with respect to the domain.
Afterwards, we compute the shape gradient of the cost functional, and characterize its shape Hessian at the optimal domain under a strong, and then a mild regularity assumption on the domain. 
We then prove the ill-posedness of the proposed shape problem by showing that the latter expression is compact.
Also, we devise an iterative algorithm based on a Sobolev gradient scheme via finite element method to solve the minimization problem.
Finally, we illustrate the applicability of the method through several numerical examples, both in two and three spatial dimensions.
\medskip

\textit{Keywords}{ coupled complex boundary method, stationary free boundary problem, Bernoulli problem, shape calculus, shape derivatives, adjoint method}
\end{abstract}

\newpage
\section{Introduction}
\label{sec:Introduction}
Let $A$ and $B$ be two bounded and simply connected domains in $\mathbb{R}^d$, $d\in\{2,3\}$, with respective boundaries $\Gamma: = \partial A$ and $\Sigma: = \partial B$, such that $B \supset \overline{A}$.
Denote the annular domain $\Omega = B \setminus \overline{A}$ (which is assumed, unless stated otherwise, to be non-empty bounded open
Lipschitz subset of $\mathbb{R}^{d}$ throughout the paper), then an exterior free boundary problem may be given as follows:
for given functions $f$, $g$, $h$, one tries to find $\Omega$ with the associated function $u:=u(\Omega)$ such that the overdetermined problem
\begin{equation}
\label{eq:FBP}
		-\Delta u = f \quad \text{in}\ \Omega,\qquad
		u = g \quad  \text{on}\  \Gamma,\qquad
		u = 0 \quad \text{and} \quad 
	 	\dn{u} = h \quad\text{on} \ \Sigma,
\end{equation}
is satisfied, where $\dn{u}:=\nabla u \cdot \nn$ is the outward normal derivative of $u$.

In this work, we are primarily interested in solving the free boundary problem (FBP) \eqref{eq:FBP} through the novel application of the so-called \textit{coupled complex boundary method} or \textit{CCBM} in solving stationary FBPs through the context of shape optimization.
For simplicity of discussion, we will consider the prototypical case of \eqref{eq:FBP} in two spatial dimensions popularly known as the exterior Bernoulli problem wherein $f \equiv 0$, $g \equiv 1$, and $h = \lambda$, where $\lambda < 0$ is a fixed constant in \eqref{eq:FBP}.
That is, we consider the problem of finding a pair $(\Omega, u) := (\Omega, u(\Omega))$ that solves the system
\begin{equation}
\label{eq:Bernoulli_problem}
		-\Delta u = 0 \ \text{in} \ \Omega,\qquad
		u = 1 \ \text{on} \ \Gamma,\qquad
		u = 0 \quad \text{and} \quad 
	 	\dn{u} = \lambda \ \text{on} \ \Sigma.
\end{equation}
The system of partial differential equations (PDEs) admits a classical solution for simply connected bounded domain $\Omega$ for any $\lambda < 0$, and uniqueness can be guaranteed for bounded convex domains $A$ \cite{FlucherRumpf1997}.
Moreover, in the said case, it was shown in \cite[Thm 1.1]{HenrotShahgholian2000a} that the free boundary is $\mathcal{C}^{2,\alpha}$ regular.
For additional qualitative properties of solutions, as well as their numerical treatment, we refer the readers, for instance, to \cite{FlucherRumpf1997}.
The problem in consideration is also known in the literature as the \textit{Alt-Caffarelli problem} \cite{AltCaffarelli1981}.
It originates from the description of free surfaces for ideal fluids \cite{Friedrichs1934}, but copious industrial applications leading to similar formulations to \eqref{eq:Bernoulli_problem} arises in many other related contexts, see \cite{Crank1984,Fasano1992,FlucherRumpf1997}.
We mention that, in this paper, we do not tackle the question of existence of optimal shape solutions for the proposed shape optimization problem.
Instead, we will tacitly assume the existence of optimal domains which is sufficiently regular to carry out a second-order shape calculus.
Even so, we mention that existence proofs and tools developed in this direction are issued in \cite{Boulkhemairetal2013,HKKP2004a,HKKP2003,HaslingerMakinen2003}.
 
The methods of shape optimization is a well-established tool to solve free boundary problems, and in the case of \eqref{eq:Bernoulli_problem}, the method can be applied in several ways.
The usual strategy is to choose one of the boundary conditions on the free boundary to obtain a well-posed state equation and then track the remaining boundary data in $L^2(\Sigma)$ (see \cite{EpplerHarbrecht2009,EpplerHarbrecht2010,HIKKP2009,HKKP2003,IKP2006,RabagoBacani2017,RabagoBacani2018}) or utilize the Dirichlet energy functional as a shape functional (see \cite{EpplerHarbrecht2006,Harbrecht2008}).
Another option is to consider an energy-gap type cost function which consists of two auxiliary states; one that is a solution of a pure Dirichlet problem and one that satisfies a mixed Dirichlet-Neumann problem (see \cite{BenAbdaetal2013,Bacani2013,EpplerHarbrecht2012a}).
The latter formulation is precisely given as the minimization problem
\begin{equation}\label{eq:KV_method}
	J_{KV}(\Omega):=\frac12 \intO{\left| \nabla\left(\un - \ud\right) \right|^2} \ \to\ \inf,
\end{equation} 
where the state functions $\un$ and $\ud$ respectively satisfy the well-posed systems
\begin{align}
	&-\Delta \un = 0 \ \text{in} \ \Omega,
	&\un = 1 \ \text{on} \ \Gamma,\quad\qquad
	&\dn{\un} = \lambda \ \text{on} \ \Sigma; \label{eq:state_un}&\\
	&-\Delta \ud = 0 \ \text{in} \ \Omega,
	&\ud = 1 \ \text{on} \ \Gamma,\quad\qquad
	&\ud = 0 \ \text{on} \ \Sigma.&			\label{eq:state_ud}
\end{align}
The equivalence between the above shape optimization formulation and the exterior Bernoulli problem \eqref{eq:Bernoulli_problem} issues from the following statement.
If $(u,\Omega)$ is a solution of \eqref{eq:Bernoulli_problem}, then $\un=\ud=u$; therefore, $J_{KV}(\Omega) \equiv 0$.
Conversely, if $J_{KV}(\Omega) = 0$, then $u = \un = \ud$ is a solution of problem \eqref{eq:Bernoulli_problem} because $\ud - \un \in H^{1}_{\Gamma,0}:=\{\varphi \in H^{1}(\Omega) \mid \varphi = 0 \ \text{on}\ \Gamma\}$, $J_{KV}(\Omega) = \frac12\big|\ud - \un\big|_{H^{1}(\Omega)}$ is a norm on $H^{1}_{\Gamma,0}(\Omega)$.
Formulation \eqref{eq:KV_method} is better known in the literature as the Kohn-Vogelius approach (see, e.g. \cite{Bacani2013,EpplerHarbrecht2012a,KohnVogelius1987}).

Recently, some modifications of the above mentioned existing methods were offered and examined in \cite{RabagoAzegami2019a,RabagoAzegami2019b}, including the so-called \textit{Dirichlet-data-gap} cost functional approach -- firstly proposed in \cite{RabagoAzegami2020}.
These approaches make use of a Robin problem as one of the state problem. 
Additionally, we mention that there is another numerical technique -- closely related to shape optimization method and was originally conceived to solve moving boundary problems via finite element method -- called the comoving mesh method or CMM developed in \cite{SunayamaKimuraRabago2022} which can be applied to solve the free boundary problem \eqref{eq:Bernoulli_problem}; see \cite[Section 3]{SunayamaKimuraRabago2022} for details.

In this paper, we want to offer yet another shape optimization approach to solve \eqref{eq:Bernoulli_problem}.
The idea is somewhat similar to \cite{RabagoAzegami2019a,Tiihonen1997}, but applies the concept of complex PDEs.
More exactly, we showcase here a new application of the so-called coupled complex boundary method for solving stationary free boundary problems.
The idea of the method is simple: we couple the Dirichlet and Neumann data in a Robin boundary condition in such a way that the Dirichlet data and the Neumann data are the respective real and imaginary parts of the Robin boundary condition.
As a result, the conditions that have to be satisfied on the free boundary are transformed into one condition that needs to be satisfied on the domain.
With the new method, as in problem \eqref{eq:KV_method}, the objective function can then be defined as a volume integral, see \eqref{eq:cost_function}.

The CCBM was first put forward by Cheng et al. in \cite{Chengetal2014} for solving an inverse source problem (see also \cite{Chengetal2016}) and was then used to solve the Cauchy problem stably in \cite{Chengetal2016}.
It is later on applied to an inverse conductivity problem with one measurement in \cite{Gongetal2017} and also to parameter identification in elliptic problems in \cite{Zhengetal2020}.
Much more recently, CCBM was also applied in solving inverse obstacle problems by Afraites in \cite{Afraites2022}.
To the best of our knowledge -- as the method has not been applied yet to solving FBPs in previous investigations -- this is the first time that CCBM will be explored to deal with stationary FBPs, particularly as a numerical resolution to the exterior Bernoulli problem.
This, in turn, provides new directions in treating related free boundary/surface problems in the context of shape optimization.

The remainder of the paper is as follows.
In Section \ref{sec:shape_optimization_formulation}, we describe how CCBM actually applies to solving problem \eqref{eq:Bernoulli_problem} in the framework of shape optimization. 
We also give further motivations about the present study -- providing merits to considering CCBM as a way to solve FBPs.
Section \ref{sec:Shape_Derivatives} is devoted to proving that the map $t \mapsto \ut$ is $\mathcal{C}^1$ in a neighborhood of zero, and to the characterization of its derivative.
Analogous results for the map $t \mapsto J(\Omega_t)$ are also exhibited therein, including especially the first-order shape derivative of the cost.
We also derive its second-order shape derivative in subsection \ref{subsec:shape_Hessian}, looking particularly on its structure at the solution of  \eqref{eq:Bernoulli_problem}.
With the latter expression, we examine the instability of the shape optimization problem in subsection \ref{subsec:instability_analysis} by proving the compactness of the shape Hessian of $J$ at a critical shape.
In Section \ref{sec:Numerical_Approximation}, we devise a numerical algorithm (subsection \ref{subsec:Numerical_Algorithm}) based on \textit{Sobolev gradient} method to solve the shape optimization problem in consideration.
This is followed by an intermediate subsection (subsection \ref{subsec:properties}), where we state and prove a small result concerning a stationary point of an evolving domain that evolves according to a given pseudo flow field related to Sobolev gradient method used in this investigation.
Then, we illustrate the feasibility of the proposed method through several numerical examples both in two and three spatial dimensions (subsection \ref{subsec:Numerical_Examples}). 
Lastly, in Appendix \ref{subsec:shape_derivative_of_the_state}, a rigorous proof of the existence of the material derivative of the state, as well as the characterization of its corresponding shape derivative, are provided. 
\section{CCBM in shape optimization setting}
\label{sec:shape_optimization_formulation}
We present here the proposed CCBM formulation of \eqref{eq:Bernoulli_problem} and give the motivation of the method for solving the free boundary problem.

\subsection{Formulation and Notations} 
In what follows we present the CCBM formulation of \eqref{eq:Bernoulli_problem} and its shape optimization reformulation by introducing the least-squares fitting for the imaginary part of the complex PDE solution.
The discussion will be issued in the case of two dimensions, but the results easily extend in three dimensions.
The main point of departure of the method is to recast \eqref{eq:Bernoulli_problem} into the complex PDE system
\begin{equation}
\label{eq:complexPDE}
		-\Delta u = 0 \ \text{in} \ \Omega,\qquad\quad
		u = 1 \ \text{on} \ \Gamma,\qquad\quad
	 	\dn{u} + i u= \lambda \ \text{on} \ \Sigma,
\end{equation}
where $i = \sqrt{-1}$ stands for the unit imaginary number.
Letting $u = \ur + i \ui$ denote the solution of \eqref{eq:complexPDE}, it can be verified that the real-valued functions $\ur$ and $\ui$ respectively satisfy the real PDE systems:
\begin{align}
	&-\Delta \ur = 0 \ \text{in} \ \Omega,\qquad\qquad
	\ur = 1 \ \text{on} \ \Gamma,
	&\dn{\ur} - \ui = \lambda \ \text{on} \ \Sigma; \label{eq:real_state}\\
	&-\Delta \ui = 0 \ \text{in} \ \Omega,\qquad\qquad
	\ui = 0 \ \text{on} \ \Gamma,
	&\ur + \dn{\ui} = 0 \ \text{on} \ \Sigma.\label{eq:imaginary_state}
\end{align}
	As mentioned earlier, a Robin-type boundary condition has already been used in \cite{RabagoAzegami2019a,RabagoAzegami2019b,RabagoAzegami2020,Tiihonen1997}.
	The formulation issued here, however, is not covered by any of the aforementioned study.
	This is because the Robin coefficient appearing in \eqref{eq:complexPDE} is a complex number while the one used in \cite{RabagoAzegami2019a,RabagoAzegami2019b,RabagoAzegami2020} is a positive real number.
	Moreover, the Robin coefficient in \cite{Tiihonen1997} is assumed to be a non-negative real number.
\begin{remark}
\label{rem:equivalence}
	Let us note that if $\ui = 0$ in $\Omega$, then we have $\ui = \dn{\ui} = 0$ on $\Sigma$ and $\ur = 0$ on $\Sigma$.
	In view of \eqref{eq:real_state} and \eqref{eq:imaginary_state}, we see that the pair $(\Omega, \ur)$ solves the original free boundary problem \eqref{eq:Bernoulli_problem}. 
	Conversely, if $(\Omega, u)$ is the solution to the free boundary problem \eqref{eq:Bernoulli_problem}, then clearly $\ur$ and $\ui$ satisfy \eqref{eq:real_state} and \eqref{eq:imaginary_state}.
\end{remark}
We infer from the previous remark that the original free boundary problem \eqref{eq:Bernoulli_problem} can be recast into an equivalent shape problem given as follows.
\begin{problem}
\label{eq:main_problem}
	Given a fixed interior boundary $\Gamma$ and a real number $\lambda < 0$, find an annular domain $\Omega$, with the exterior boundary $\Sigma:=\partial \Omega\setminus \Gamma$, and a function $u := u(\Omega)$ such that $\ui = 0$ in $\Omega$ and $u = \ur + i\ui$ solves the PDE system \eqref{eq:complexPDE}.
\end{problem}
\textbf{Notations.} The notations for the function spaces used in the paper are as follows.
We denote by $W^{m,p}(\Omega)$ the standard real Sobolev space with the norm $\|\cdot\|_{W^{m,p}(\Omega)}$.
Let $W^{0,p}(\Omega)=L^p(\Omega)$, and particularly, $H^m(\Omega)$ represents $W^{m,2}(\Omega)$ with the corresponding inner product $( \cdot , \cdot)_{m,\Omega}$ and norm $\|\cdot\|_{H^m(\Omega)}$.
We let $\HH^m(\Omega)$ be the complex version of $H^m(\Omega)$ with the inner product $(\!(\cdot, \cdot)\!)_{m,\Omega}$ and norm $\vertiii{\, \cdot\, }_{\HH^m(\Omega)}$ defined as follows: for all $u, v \in \HH^m(\Omega)$, $(\!( u, v )\!)_{m,\Omega} = (u, \cv)_{m,\Omega}$ and $\vertiii{v}_{\HH^m(\Omega)}^2 = (\!( v,v )\!)_{m,\Omega}$.
Similarly, we denote $V(\Omega):=H^{1}_{\Gamma,0}(\Omega)$, $\HHg(\Omega) :=\HH^{1}_{\Gamma,0}(\Omega)$, $Q = L^2(\Omega)$, $\QQ = \LL^2(\Omega)$, $S = L^2(\Sigma)$, $\SSig = \LL^2(\Sigma)$.
Lastly, we define $\vertiii{v}_{\HHg(\Omega)}^2 :=  \intO{ \nabla {v} \cdot \nabla {\cv}} + \intS{ {v} {\cv}}$.

Before we go further, we remark that the complex PDE system \eqref{eq:complexPDE} is well-posed.
Indeed, with the sesquilinear form $\aaa$ defined on $\HHg(\Omega) \times \HHg(\Omega)$ by
	\[
		\aaa({u},{v}) = \intO{\nabla {u} \cdot \nabla {\cv}} + i \intS{{u} {\cv}}, \quad \forall {u}, {v} \in \HHg(\Omega),
	\]
and the linear form $l(v) = \lambda \intS{ {\cv}}$, we may state \eqref{eq:complexPDE} in variational form as follows:
\begin{equation}\label{eq:state_weak_form}
	\text{find ${u} \in \HH^{1}(\Omega)$, \ $u = 1$ on $\Gamma$, \ \ such that\ \  $\aaa({u},{v}) = l({v})$, \ for all $v \in \HHg(\Omega)$}.
\end{equation}
The existence and uniqueness of solution of the problem follows from the complex version of the Lax-Milgram lemma \cite[p. 376]{DautrayLionsv21998} (see also \cite[Lem. 2.1.51, p. 40]{SauterSchwab2011}).

Now, to solve Problem \ref{eq:main_problem}, we introduce the cost functional
\begin{equation}\label{eq:cost_function}
	J(\Omega) = \frac12\|\ui\|^2_{L^2(\Omega)} = \frac12 \intO{|\ui|^2},
\end{equation}
where $\ui$ is, of course, subject to the state problem \eqref{eq:imaginary_state}.
The shape optimization problem that we consider then is the problem of minimizing $J(\Omega)$ over a set of admissible domains $\mathcal{O}_\text{ad}$.
Here, $\mathcal{O}_\text{ad}$ is essentially the set of $\mathcal{C}^{1,1}$ annular domains $\Omega$ with (fixed) interior boundary $\Gamma$ and (free) exterior boundary $\Sigma$. 
It actually suffices to consider $\Gamma$ to be only Lipschitz regular in deriving the (first-order) shape derivative of the functions and shape functionals involved, but for simplicity, we also assume it to be $\mathcal{C}^{1,1}$ regular.
For second-order shape sensitivity analysis, we will require the domain $\Omega$ be of class $\mathcal{C}^{2,1}$ -- at least when applying the chain rule approach.

To numerically solve the optimization problem $J(\Omega) \to \inf$, we will apply a shape-gradient-based descent method based on finite element method (FEM).
We will not, however, employ any kind of adaptive mesh refinement in our numerical scheme as opposed to \cite{RabagoAzegami2019a,RabagoAzegami2019b,RabagoAzegami2020}.
In this way, we can further assess the stability of the new method in comparison with the Kohn-Vogelius approach.
Our method is of course different from \cite{EpplerHarbrecht2009,EpplerHarbrecht2010,EpplerHarbrecht2012a} which make use of the boundary element method to numerically solve the shape optimization problem studied in the said papers.
The expression for the shape derivative of the cost will be exhibited in the next section using \textit{shape calculus} \cite{DelfourZolesio2011,HenrotPierre2018,MuratSimon1976,Simon1980,SokolowskiZolesio1992}.
We point out though that as opposed to \cite{EpplerHarbrecht2009,EpplerHarbrecht2010,EpplerHarbrecht2012a} where the chain rule approach is used to compute the shape derivative of the cost, our approach does not use the strong form of the shape (Eulerian) derivative of the state, but instead only make use of the weak form of the material (Lagrangian) derivative of the state problem (see Remark \ref{rem:detour_remark}).
This technique also applies to computing the second-order shape derivative of the cost which requires the domain to be only of class $\mathcal{C}^{1,1}$.

Throughout the rest of the paper, we shall refer to the Kohn-Vogelius method as KVM, or just KV, and to our proposed shape optimization method simply by CCBM.
%
%
\subsection{Motivations} To further motivate the study of the proposed shape optimization formulation $J(\Omega) \to \inf$, we discuss here about some key advantages of CCBM over the KVM \eqref{eq:KV_method} (and to other traditional approaches) while giving some additional observations about the two formulations.

From the theoretical point of view, the computation of the first-order shape derivative of the cost function $J$ is simpler compared to $J_{KV}$ when computed via a Lagrange formulation or through the minimax formulation \cite{DelfourZolesio1988a}.
This is because the corresponding Lagrangian expression for CCBM is composed of only one equality constraint (this corresponds to problem \eqref{eq:complexPDE}) in addition to the objective function $J$.
In the case of KVM, the Lagrangian consists of the cost function $J_{KV}$ and two equality constraints corresponding to a mixed Dirichlet-Neumann problem and a pure Dirichlet problem.
Moreover, in order to take into account in the Lagrangian functional the Dirichlet data on the free boundary for KVM, one needs to introduce a suitable Lagrangian multiplier in constructing the functional.
Such additional requirement is not needed in the case of the CCBM formulation.
In addition, when applying instead the chain rule approach to get the shape derivative of the cost $J$ through the shape derivative of the states, one only needs to compute and justify the existence of the shape derivative of a single complex PDE in contrast to the case of KVM where one has to deal with the shape derivative of two systems of PDEs.
Certainly, the latter would be more involved, especially when considering more general and complicated boundary value problems.
Also, we add that the formulation -- under suitable assumptions -- offers more regularity for the solution of the corresponding adjoint problem (see Remark \ref{rem:higher_regularity_of_the_adjoint}) than in the case of formulating \eqref{eq:Bernoulli_problem} into a shape optimization setting that utilizes a boundary-data-tracking-type cost functional.
Meanwhile, from the numerical viewpoint, CCBM appears to have nearly the same computational complexity with respect to KVM.
Nonetheless, it features some notable advantages in terms of overall performance over KVM, but also presents some downsides.
On the one hand, CCBM only needs to solve one complex state problem to evaluate its corresponding cost function.
On the other hand, it also requires to solve another PDE system (this corresponds to the adjoint problem \eqref{eq:adjoint_system} associated with the formulation) in order to calculate its \textit{shape gradient}.
This is as opposed to KVM which does not need the introduction of an adjoint problem.
Also, we stress here that, when applying FEM for instance, one does not need to solve the coupled problems \eqref{eq:real_state} and \eqref{eq:imaginary_state} simultaneously since it is enough to solve the variational problem corresponding to \eqref{eq:complexPDE}. 
Even so, the time needed to compute the solution to \eqref{eq:complexPDE} is practically the same with the time required to solve the two state problems associated with the KV formulation via a finite element scheme.
These observations may not seem to provide any advantages to CCBM over KVM, but computational results -- specifically obtained under large deformations of the domains -- provided in Examples \ref{example2d2}--\ref{example2d3} of subsection \ref{subsec:Numerical_Examples} show otherwise.
In fact, in many of these performed experiments, CCBM seems to require less \textit{overall} computational-time-per-iteration of the approximate shape solution to \eqref{eq:Bernoulli_problem} compared to KVM when solve via a gradient-based scheme, not to mention that, in some instances, KVM converges prematurely and sometimes tends to overshoot the optimal shape not like CCBM.
In general, KVM converges faster than CCBM and that the two methods complete the same number of iterations at nearly the same time.
Moreover, as we observed in our numerical experiments, CCBM seems to have some sort of a similar smoothing effect that has been observed in \cite{RabagoAzegami2019a} when replacing the Dirichlet boundary condition with a Robin boundary condition in a Neumann-data-tracking approach.
In addition, we become aware -- after conducting numerous numerical experiments -- that the cost function $J_{KV}$ is less sensitive compared to $J$.
In fact, we notice that $J_{KV}(\Omega)$ is insensitive to large deformations of $\Omega$.
In some sense, this means that, after some iterations, the $k$th approximant $\Omega^{k}$ of the exact shape solution $\Omega^\ast$ gets stuck to some certain geometry and so will not get closer to the exact solution even after an additional number of iterations.
The insensitivity of $J_{KV}$ to large variations limits the algorithm to take smaller step sizes at each iterations -- so to avoid being stuck at certain point which may not yet be considered optimal. 
Surely, in some situations, smaller step sizes may result to slower convergence behavior.
Be that as it may, the two methods provide almost the same optimal solution to the given problem.
These claims are made evident in subsection \ref{subsec:Numerical_Examples} where numerical examples are provided.
Last but not least, we point out that there is another domain-integral-type penalization that may be considered.
More exactly, one may opt to penalize, instead of $J_{KV}$, by the cost functional 
\[
	J_{L^2}(\Omega) := \frac{1}{2} \intO{(\un - \ud)^2}. 
\]
Compared with $J_{KV}$, however, its shape gradient is more complex in structure and would be numerically expensive to evaluate.
In fact, it can be checked that in this case one would need to solve five real BVPs: two state equations, two adjoint equations (cf. \cite{LaurainPrivat2012}), and one to approximate the mean curvature of the free boundary implicitly.

The discussion given above, as well as the numerical findings issued in subsection \ref{subsec:Numerical_Examples}, clearly warrants the proposal of the new method as a way to solve stationary free boundary problems such as \eqref{eq:FBP} and \eqref{eq:Bernoulli_problem}.
\section{Shape Derivatives}
\label{sec:Shape_Derivatives}
This section is devoted to proving that the map $t \mapsto \ut$ is $\mathcal{C}^1$ in a neighborhood of zero (Proposition \ref{prop:umaps}), and to the characterization of its derivative.
Similar results for the map $t \mapsto J(\Omega_t)$ (Theorem \ref{prop:Jmaps}) is also presented herein.
On this purpose, we need the notions of shape derivatives of functions and shape functionals, and those we give in the first subsection.
Meanwhile, a rigorous proof of existence of the Lagrangian derivative of the state -- presented specifically in its weak form -- will be given in subsection \ref{subsec:Lagrangian_derivative_of_the_state}.
Finally, the shape derivative of the cost functional is exhibited in subsection \ref{subsec:shape_derivative_of_the_cost_using_the_Eulerian_derivatives}.

\subsection{Some elements of shape calculus} To accomplish our main tasks, we apply the concept of \textit{velocity} or \textit{speed} method (see, e.g., \cite[Chap. 4]{DelfourZolesio2011} or \cite{DelfourZolesio1991b}).
We let $\mathcal{D}^{k}(\mathbb{R}^d; \mathbb{R}^d)$ be the space of $k$-times continuously differentiable functions with compact support contained in $\mathbb{R}^d$.
Consider $\VV \in \mathcal{E}^k:=\mathcal{C}([0, \varepsilon); \mathcal{D}^{k}(\mathbb{R}^d; \mathbb{R}^d))$, $\mathbb{N} \ni k \geqslant 2$, and let $\varepsilon > 0$ be a small real number.
The field  $\VV(t)(x) = \VV(t,x)$, $x \in \mathbb{R}^d$, generates the transformations $T_t(\VV)(X) := T_t(X) = x(t; X)$, $t \geqslant 0$, $X \in \mathbb{R}^d$, through the differential equation $\dot{x}(t; x_0) = \VV(t, x(t; x_0))$, $x(0;x_0) = x_0$, with the initial value $x_0$ specified. 
We denote the ``transformed domain'' $T_t(\VV)(\Omega)$ at $t \geqslant 0$ by $\Omega_t=:T_t(\Omega)$, 
and assumed that all \textit{admissible} deformations of $\Omega$ is contained in a larger open, bounded, and connected set $U \subset \mathbb{R}^d$ (e.g., a ball in $\mathbb{R}^d$) of class $\mathcal{C}^{k,1}$.  
In this work, the evolutions of the reference domain $\Omega$ is described using time-independent velocity fields $\VV$ such that
\begin{equation}
\label{eq:space_for_V}
	\VV \in \sfTheta^{k}:=\{\VV \in \mathcal{C}^{k,1}(\overline{\Omega})^{d} \mid \VV = 0 \ \text{on} \ \Gamma \cup \partial U\}.
\end{equation}
Here, we are fixing the interior boundary $\Gamma$ by taking $\VV = 0$ on $\Gamma$.
We note that it is enough to consider transformations $T_t$ that only change the position of the free part $\Sigma$ of $\partial \Omega$, but do not rotate it.
In other words, we may just consider vector fields that have zero tangential part along $\Sigma$; that is, $\VV|_{\Sigma} = (\VV\cdot\nn)\nn = \Vn\nn$. 

\sloppy We say that the function $u$ has a \textit{material} derivative $\dot{u}$ and a \textit{shape} derivative $u'$ at $0$ in the direction of the vector field $\VV$ if the limits $\dot{u} = \lim_{t \searrow 0} \frac{1}{t}\left(u(\Omega_t) \circ T_t - u(\Omega) \right)$ and $u' = \lim_{t \searrow 0} \frac{1}{t}\left( u(\Omega_t) - u(\Omega)\right)$, exist, respectively, where $(u(\Omega_t) \circ T_t)(x) = u(\Omega_t)(T_t(x))$.
These expressions are related by $u' = \dot{u} - (\nabla u \cdot \VV)$ provided that $\nabla u \cdot \VV$ exists in some appropriate function space \cite{DelfourZolesio2011,SokolowskiZolesio1992}. 

Given a shape functional $\sfj : \Omega \to \mathbb{R}$, we say that it has a directional Eulerian derivative at $\Omega$ in the direction of $\VV$ if the limit $\lim_{t \searrow0} \frac{1}{s}\left(\sfj(\Omega_t) - \sfj(\Omega)\right) =: \operatorname{\mathnormal{d}} \sfj(\Omega)[\VV]$ exists (cf. \cite[Eq. (3.6), p. 172]{DelfourZolesio2011}). 
If the map $\VV \mapsto \operatorname{\mathnormal{d}} \sfj(\Omega)[\VV]$ is linear and continuous, then $j$ is \textit{shape differentiable} at $\Omega$, and the map is referred to as the \textit{shape gradient} of $j$.
Similarly, the \textit{second-order} Eulerian derivative of $j$ at $\Omega$ along the two fields $\VV$ and $\WW$ is given by $\lim_{s \searrow0} \frac{1}{s}\left( \operatorname{\mathnormal{d}}\sfj(\Omega_s(\WW))[\VV] - \operatorname{\mathnormal{d}}\sfj(\Omega)[\VV] \right) =: {\operatorname{\mathnormal{d}}}^2\sfj(\Omega)[\VV,\WW]$
if the limit exists \cite[Def. 2.3]{DelfourZolesio1991a}.
In addition, $\sfj$ is said to be \textit{twice shape differentiable} if, for all $\VV$ and $\WW$, ${\operatorname{\mathnormal{d}}}^2\sfj(\Omega)[\VV,\WW]$ exists, and is bilinear and continuous with respect to $\VV, \WW$.
Accordingly, we call the expression as the \textit{shape Hessian} of $\sfj$.

To complete our preparation, we also mention some properties of $T_t := T_t(\VV)$, $\VV \in \sfTheta^1$, that are essential to our investigation.
For sufficiently small $t>0$, $T_t$ and its inverse $T_t^{-1}$ are both in $\mathcal{D}^{1}(\mathbb{R}^d; \mathbb{R}^d)$ \cite[Chap. 4]{DelfourZolesio2011} where $d$ is the space dimension.
Moreover, for $t \in [0, \varepsilon)$, $\varepsilon > 0$ can be chosen sufficiently small so that $I_t := \det \,{\mathnormal{D}}T_t > 0$.
Throughout the paper, we use the notations $A_t := I_t({\mathnormal{D}}T_t^{-1})({\mathnormal{D}}T_t)^{-\top}$ and $B_{t} = I_t |({\mathnormal{D}}T_t)^{-\top} \nn|$, and assume that, for all $t \in [0, \varepsilon)$, $I_t > 0$ and there exist constants $\Lambda_0, \Lambda_1$ ($0 < \Lambda_0 < \Lambda_1$) such that 
$\Lambda_0 \leqslant B_t \leqslant \Lambda_1$ and $\Lambda_0|\xi|^2 \leqslant A_t\xi \cdot \xi \leqslant \Lambda_1|\xi|^2$ for all $\xi \in \mathbb{R}^d$.
We also note the following regularities: $[t \mapsto I_t] \in \mathcal{C}^1(\mathcal{I},\mathcal{C}(\overline{\Omega}))$, $[t \mapsto A_t] \in \mathcal{C}^1(\mathcal{I},\mathcal{C}(\overline{\Omega})^{d \times d})$, and $[t \mapsto B_t] \in \mathcal{C}^1(\mathcal{I},\mathcal{C}(\Sigma))$, on a neighborhood $\mathcal{I}$ of $0$ such that, for $t \in \mathcal{I}$, $T_t$ is a diffeomorphism of $\Omega$ onto $\Omega_t$ (see, e.g., \cite{IKP2006}).
Lastly, we note the following derivatives: $\dot{I}_t \big|_{t=0} := (d/dt)I_t \big|_{t=0} = {\operatorname{div}}\, \VV$, $\dot{A}_t \big|_{t=0} = A := ({\operatorname{div}}\VV)\vect{I} -  \operatorname{\mathnormal{D}}\!\VV - (\operatorname{\mathnormal{D}}\!\VV)^\top$, and $\dot{B}_t \big|_{t=0} = {\operatorname{div}}_{\Sigma} \VV = {\operatorname{div}} \VV \big|_{\Sigma} - (\operatorname{\mathnormal{D}}\!\VV\nn)\cdot\nn$. 
\subsection{Lagrangian derivatives of the state}
\label{subsec:Lagrangian_derivative_of_the_state}
The main point of this section is to prove the following proposition concerning the material derivative of the state.
Throughout the section, $\Omega$ is assumed to be of class $\mathcal{C}^{1,1}$ and $\VV \in \sfTheta^1$.
This implies that, for sufficiently small $t>0$, $T_t$ is a $\mathcal{C}^{1,1}$ diffeomorphism of $\Omega$ onto $\Omega_t$.
\begin{proposition}
	\label{prop:umaps}
	Let $\Omega$ be of class $\mathcal{C}^{1,1}$ and $\VV \in \sfTheta^1$.
	The map $t \mapsto \ut \in \HH^{1}(\Omega)$ is $\mathcal{C}^1$ in a neighborhood of $0$.
	Its Lagrangian derivative at $0$, denoted by $\dotu$, belongs to $\HHg(\Omega)$ and satisfies
	\begin{equation}\label{eq:Lagrangian_derivative_of_the_state}
	\begin{aligned}
	\intO{\nabla {\dotu} \cdot \nabla {\cv}}  + i \intS{\dotu {\cv} }  
		&= -\intO{A \nabla {u} \cdot \nabla {\cv}} - i \intS{ ({\operatorname{div}}_{\Sigma} \VV) {u} {\cv}} \nonumber\\
		&\qquad + \lambda \intS{ ({\operatorname{div}}_{\Sigma} \VV) {\cv}}, \quad \forall {v} \in \HHg(\Omega)
	\end{aligned}
	\end{equation}
\end{proposition}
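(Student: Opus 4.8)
The plan is to pull the state equation \eqref{eq:state_weak_form}, which lives on the moving domain $\Omega_t$, back to the fixed reference domain $\Omega$ through the diffeomorphism $T_t = T_t(\VV)$, so that the transported unknown $\ut = u(\Omega_t)\circ T_t$ solves a variational problem posed on the $t$-independent space $\HHg(\Omega)$ but with $t$-dependent coefficients $A_t, B_t$. I would then invoke the implicit function theorem to obtain, in a single stroke, both the $\mathcal{C}^1$ regularity of $t\mapsto\ut$ and the weak equation satisfied by $\dotu$.

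First I would carry out the transport. Using the change of variables $x=T_t(X)$, the gradient identity $\nabla_x\phi = ({\mathnormal{D}}T_t)^{-\top}\nabla_X(\phi\circ T_t)$, the volume factor $I_t$, and the surface factor $B_t$, together with the abbreviation $A_t = I_t({\mathnormal{D}}T_t)^{-1}({\mathnormal{D}}T_t)^{-\top}$, the weak formulation on $\Omega_t$ becomes, for all $v\in\HHg(\Omega)$,
\[
	\intO{A_t\nabla\ut\cdot\nabla\cv} + i\intS{B_t\,\ut\cv} = \lambda\intS{B_t\,\cv},
\]
with $\ut = 1$ on $\Gamma$; the Dirichlet datum is carried unchanged and is $t$-independent precisely because $\VV = 0$ on $\Gamma$. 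Fixing once and for all a lift $u_\flat\in\HH^1(\Omega)$ with $u_\flat = 1$ on $\Gamma$, I would set $w^t := \ut - u_\flat\in\HHg(\Omega)$ and recast the identity as $a_t(w^t,v) = L_t(v)$, where $a_t(w,v) := \intO{A_t\nabla w\cdot\nabla\cv} + i\intS{B_t\,w\cv}$ is a sesquilinear form on $\HHg(\Omega)\times\HHg(\Omega)$ and the antilinear form $L_t$ collects the data together with the contributions of the lift.

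Next I would set up the implicit function theorem. Define $\Phi\colon \mathcal{I}\times\HHg(\Omega)\to\HHg(\Omega)^\ast$ by $\langle\Phi(t,w),v\rangle = a_t(w,v) - L_t(v)$, so that $\Phi(t,w^t)=0$. Three points must be verified. (i) $\Phi$ is $\mathcal{C}^1$ in $t$, which follows from the stated regularities $[t\mapsto A_t]\in\mathcal{C}^1(\mathcal{I},\mathcal{C}(\overline\Omega)^{d\times d})$ and $[t\mapsto B_t]\in\mathcal{C}^1(\mathcal{I},\mathcal{C}(\Sigma))$. (ii) For each $t$, the partial differential $\partial_w\Phi(t,\cdot) = a_t(\cdot,\cdot)$ is an isomorphism of $\HHg(\Omega)$ onto its antidual; this is the complex Lax--Milgram lemma, applicable because $a_t$ is uniformly bounded and uniformly coercive. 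Indeed, since $A_t$ is real, symmetric, and satisfies $A_t\xi\cdot\xi\geq\Lambda_0|\xi|^2$, one has $\operatorname{Re}a_t(w,w) = \intO{A_t\nabla w\cdot\nabla\overline{w}}\geq\Lambda_0\|\nabla w\|_{L^2(\Omega)}^2$, and because $w=0$ on $\Gamma$ the Poincaré inequality upgrades this to control of $\vertiii{w}^2_{\HHg(\Omega)}$, uniformly in $t$ (the Robin term contributes to $\operatorname{Im}a_t$ but is not needed for coercivity). The implicit function theorem then gives that $t\mapsto w^t$, hence $t\mapsto\ut$, is $\mathcal{C}^1$ near $0$ with values in $\HH^1(\Omega)$, and that $\dotu\in\HHg(\Omega)$.

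Finally I would differentiate the transported identity in $t$ at $t=0$. Using $A_0 = \vect{I}$, $B_0 = 1$, $u^0 = u$, and the derivatives $\dot A_0 = A = (\operatorname{div}\VV)\vect{I} - \operatorname{\mathnormal{D}}\!\VV - (\operatorname{\mathnormal{D}}\!\VV)^\top$ and $\dot B_0 = \operatorname{div}_{\Sigma}\VV$ recorded in the preliminaries, I obtain
\[
	\intO{\nabla\dotu\cdot\nabla\cv} + \intO{A\nabla u\cdot\nabla\cv} + i\intS{\dotu\,\cv} + i\intS{(\operatorname{div}_{\Sigma}\VV)u\cv} = \lambda\intS{(\operatorname{div}_{\Sigma}\VV)\cv},
\]
and transposing the two inhomogeneous terms yields exactly \eqref{eq:Lagrangian_derivative_of_the_state}. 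The main obstacle is not this concluding differentiation, which is essentially bookkeeping once the coefficient derivatives are in hand, but rather rigorously establishing the hypotheses that make the implicit function theorem applicable: the uniform-in-$t$ coercivity and boundedness of $a_t$ and the $\mathcal{C}^1$ dependence $t\mapsto a_t$ as a map into $\mathcal{L}(\HHg(\Omega),\HHg(\Omega)^\ast)$. The complex, sesquilinear setting additionally demands careful handling of conjugation so as to remain consistent with the complex Lax--Milgram lemma; the observation that coercivity is furnished by the Dirichlet condition on $\Gamma$, rather than by the Robin boundary term, is what keeps the estimates uniform in $t$.
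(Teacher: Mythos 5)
Your proposal is correct and follows essentially the same route as the paper's proof: pull the state equation back to the fixed domain via $T_t$ (the paper's Lemma \ref{lem:transported_problem}), establish boundedness and coercivity of the transported sesquilinear form (Lemma \ref{lem:sesquiliner_form}), apply the implicit function theorem around a fixed lift of the Dirichlet datum to get $\mathcal{C}^1$ dependence of $t \mapsto \ut$, and differentiate the transported identity at $t=0$ using $\dot{A}_t\big|_{t=0} = A$ and $\dot{B}_t\big|_{t=0} = \operatorname{div}_{\Sigma}\VV$. The only point where you genuinely deviate is the coercivity estimate: you obtain it uniformly in $t$ from $\Re\,\aat(w,w) \geqslant \Lambda_0 \|\nabla w\|^2_{L^2(\Omega)}$ combined with the Poincar\'{e} and trace inequalities available on $\HHg(\Omega)$ (since $w=0$ on $\Gamma$), whereas the paper argues by continuity of $t\mapsto A_t, B_t$ at $0$, using $\max(|A_t-\vect{I}|_\infty, |B_t-1|_\infty)<1$ for small $t$ and invoking both the gradient and the boundary terms of $\aat(w,w)$ --- your variant is in fact slightly more robust, because the Robin contribution to $\aat(w,w)$ is purely imaginary (as $B_t$ is real), so the paper's displayed bound on $\Re\{\aat(w,w)\}$ alone cannot control the $\intS{|w|^2}$ part of $\vertiii{w}^2_{\HHg(\Omega)}$ and implicitly requires passing to the modulus $|\aat(w,w)|$, a step your Poincar\'{e}-based argument sidesteps entirely.
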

To prove the above result, we need the following lemmas.
\begin{lemma}\label{lem:sesquiliner_form}
	The sesquilinear form $\aat$ defined on $\HHg(\Omega) \times \HHg(\Omega)$ by
	\[
		\aat({u},{v}) = \intO{A_t \nabla {u} \cdot \nabla {\cv}} + i \intS{B_t {u} {\cv}}, \quad \forall {u}, {v} \in \HHg(\Omega),
	\]
	is bounded and coercive on $\HHg(\Omega) \times \HHg(\Omega)$ for sufficiently small $t > 0$.
\end{lemma}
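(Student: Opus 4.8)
The plan is to verify that $\aat$ satisfies the two hypotheses of the complex version of the Lax--Milgram lemma already invoked for the form $\aaa$ at $t=0$, namely boundedness and coercivity on $\HHg(\Omega)\times\HHg(\Omega)$, where the latter space is equipped with the norm $\vertiii{\,\cdot\,}_{\HHg(\Omega)}$ satisfying $\vertiii{v}_{\HHg(\Omega)}^2 = \intO{\nabla v \cdot \nabla \cv} + \intS{v \cv}$. Both properties will follow directly from the uniform bounds recorded in the previous subsection, namely $\Lambda_0 \leqslant B_t \leqslant \Lambda_1$ on $\Sigma$ and $\Lambda_0|\xi|^2 \leqslant A_t\xi\cdot\xi \leqslant \Lambda_1|\xi|^2$ for all $\xi\in\mathbb{R}^d$, which hold for every $t$ in a neighborhood of $0$. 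The one structural fact I would emphasize at the outset is that $A_t = I_t({\mathnormal{D}}T_t)^{-1}({\mathnormal{D}}T_t)^{-\top} = I_t\big[({\mathnormal{D}}T_t)^{\top}{\mathnormal{D}}T_t\big]^{-1}$ is a \emph{symmetric} positive-definite matrix, a point that makes the coercivity estimate transparent.

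For boundedness, I would split $|\aat(u,v)|$ by the triangle inequality into a volume and a boundary contribution. Since $A_t$ is symmetric with spectrum contained in $[\Lambda_0,\Lambda_1]$, its operator norm on $\mathbb{C}^d$ equals $\Lambda_1$, so $|A_t\nabla u \cdot \nabla \cv| \leqslant \Lambda_1 |\nabla u|\,|\nabla v|$; Cauchy--Schwarz in $L^2(\Omega)$ then bounds the volume term by $\Lambda_1\vertiii{u}_{\HHg(\Omega)}\vertiii{v}_{\HHg(\Omega)}$. Similarly, $B_t\leqslant\Lambda_1$ together with Cauchy--Schwarz in $L^2(\Sigma)$ controls the boundary term, and summing yields $|\aat(u,v)|\leqslant 2\Lambda_1\,\vertiii{u}_{\HHg(\Omega)}\vertiii{v}_{\HHg(\Omega)}$, uniformly in $t$.

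For coercivity I would test with $v=u$. By the symmetry of $A_t$ the volume integral $\intO{A_t\nabla u \cdot \nabla \cu}$ is real, and decomposing $u$ into real and imaginary parts the pointwise ellipticity gives $\intO{A_t\nabla u \cdot \nabla \cu}\geqslant\Lambda_0\intO{\nabla u \cdot \nabla \cu}$; the boundary term $i\intS{B_t u \cu} = i\intS{B_t|u|^2}$ is purely imaginary with $\intS{B_t|u|^2}\geqslant\Lambda_0\intS{|u|^2}$. Thus $\aat(u,u)=p+ir$ with $p,r\geqslant 0$, and the elementary inequality $\sqrt{p^2+r^2}\geqslant\tfrac{1}{\sqrt2}(p+r)$ gives
\[
	|\aat(u,u)| \geqslant \frac{\Lambda_0}{\sqrt2}\left(\intO{\nabla u \cdot \nabla \cu} + \intS{|u|^2}\right) = \frac{\Lambda_0}{\sqrt2}\,\vertiii{u}_{\HHg(\Omega)}^2 .
\]

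The main point requiring care is that $\aat$ is a genuinely non-Hermitian complex sesquilinear form, so $\aat(u,u)$ is complex-valued and one cannot simply read off coercivity from its real part as in the real symmetric case; coercivity must instead be established in the modulus $|\aat(u,u)|$. The device that makes this succeed is precisely the orthogonal splitting into the (real) Dirichlet-type volume part and the (imaginary) boundary part, which hinges on the symmetry of $A_t$ so that no indefinite cross term contaminates the imaginary part. Finally, the hypothesis that $t>0$ be sufficiently small serves only to guarantee the validity of the uniform constants $\Lambda_0,\Lambda_1$ through the regularity $[t\mapsto A_t]$ and $[t\mapsto B_t]$ near $0$; once these bounds are in force, the estimates above hold with constants independent of $t$.
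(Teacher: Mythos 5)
Your proof is correct, and on the coercivity step it takes a genuinely different route from the paper's. The boundedness argument coincides in both (triangle inequality plus Cauchy--Schwarz on the volume and boundary contributions; the paper records the constant as $c_t = \max\left( |A_t|_{\infty}, |B_t|_{\infty} \right)$, while you give the $t$-uniform $2\Lambda_1$). For coercivity, the paper instead perturbs around the $t=0$ form: it writes $\aat(u,u)$ as the $t=0$ form plus remainders involving $A_t - \vect{I}$ and $B_t - 1$, invokes continuity of $t \mapsto A_t$ and $t \mapsto B_t$ at $0$ to make $\max\left( |A_t - \vect{I}|_{\infty}, |B_t - 1|_{\infty} \right) < 1$ for small $t$, and concludes $\Re\{\aat(u,u)\} \gtrsim \vertiii{\nabla u}^2_{\QQ} + \vertiii{u}^2_{\SSig}$. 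As you correctly observe, the boundary term $i\intS{B_t |u|^2}$ is purely imaginary, so the real part alone controls only the gradient seminorm; the paper's stated inequality therefore implicitly needs a Poincar\'e/trace argument (available on $\HHg(\Omega)$ since $u = 0$ on $\Gamma$) to recover the $\vertiii{u}^2_{\SSig}$ piece, a step it does not spell out. Your device --- exploiting the symmetry of $A_t = I_t\bigl[({\mathnormal{D}}T_t)^{\top}{\mathnormal{D}}T_t\bigr]^{-1}$ to split $\aat(u,u)$ into a nonnegative real volume part $p$ and a nonnegative imaginary boundary part $r$, then applying $\sqrt{p^2+r^2} \geqslant (p+r)/\sqrt{2}$ --- closes exactly this point, works directly in the modulus (which is what the complex Lax--Milgram lemma actually requires), and delivers the explicit $t$-uniform constant $\Lambda_0/\sqrt{2}$ for the full $\vertiii{\,\cdot\,}_{\HHg(\Omega)}$ norm on the whole interval $[0,\varepsilon)$ where the standing ellipticity bounds hold, not merely for $t$ small enough that the perturbations from $\vect{I}$ and $1$ are below $1$. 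The trade-off is that your argument leans on the uniform bounds $\Lambda_0, \Lambda_1$ assumed in the paper's setup, whereas the paper's version leans only on continuity of $t \mapsto A_t$ and $t \mapsto B_t$ at $0$; since both sets of hypotheses are in force here, each proof is legitimate, but yours is the tighter and more transparent of the two.
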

\begin{proof}
	\sloppy For ${u}$, ${v} \in \HHg(\Omega)$ and sufficiently small $t>0$, we have, by Cauchy-Schwarz and trace inequalities, the estimate $|\aat({u}, {v})| \leqslant c_t \vertiii{u}_{\HHg(\Omega)}  \vertiii{v}_{\HHg(\Omega)}$ where $c_t = \max\left( |A_t|_{\infty}, |B_t|_{\infty} \right)$.
	This shows that $\aat$ is bounded.
	Also, we have that\footnote{Here, $\vect{I}$ stands for the identity matrix in $d=2$ dimensions.}
\[
	\aat({u}, {u}) = \intO{ \left\{ \nabla u \cdot \nabla \cu + (A_t - \vect{I}) \nabla {u} \cdot \nabla {\cu} \right\}}
					+ i \intS{ \left\{ u \cu + (B_t - 1){u}\cu \right\}}.
\]
	Because the maps $t \mapsto A_t$ and $t \mapsto B_t$ are continuous at $0$, then -- for sufficiently small $t$ -- $\max\left( |A_t - \vect{I}|_{\infty}, |B_t - 1|_{\infty} \right) < 1$. 
	Consequently, $\Re\{\aat({u}, {u})\} \gtrsim \left( \vertiii{\nabla {u}}^2_{\QQ} + \vertiii{u}^2_{\SSig} \right)$.
	Hence, $\aat$ is coercive for small enough $t$.
\end{proof}
\begin{lemma}
	\label{lem:transported_problem}
	The function $u^t=u_{1}^t + i u_{2}^t$ uniquely solves in $\HH^{1}(\Omega)$ the equation
	\begin{equation}\label{eq:transformed_state}
	\begin{aligned}
	\intO{A_t\nabla {u^t} \cdot \nabla {\cv}} 
					+ i \intS{ B_{t} {u^t} {\cv} } &= \lambda \intS{B_{t} {\cv}}, \ \ \forall {v} \in \HHg(\Omega),\\
	u^t &= 1\ \text{on}\ \Gamma.	
	\end{aligned}
	\end{equation}
\end{lemma}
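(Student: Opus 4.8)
The plan is to obtain \eqref{eq:transformed_state} by writing down the variational formulation of \eqref{eq:complexPDE} on the perturbed domain $\Omega_t$ and then transporting it back to the fixed reference domain $\Omega$ through the diffeomorphism $T_t$; the unique solvability will follow immediately from Lemma \ref{lem:sesquiliner_form}. First I would record the weak problem satisfied by the state $u(\Omega_t)$ on $\Omega_t$: arguing exactly as in \eqref{eq:state_weak_form} but over $\Omega_t$, the function $u(\Omega_t) \in \HH^1(\Omega_t)$ with $u(\Omega_t)=1$ on $\Gamma$ solves
\[
	\intOt{\nabla u(\Omega_t) \cdot \nabla \overline{w}} + i\intSt{u(\Omega_t)\overline{w}} = \lambda \intSt{\overline{w}}, \quad \forall w \in \HHg(\Omega_t).
\]
Here I use that $\VV = 0$ on $\Gamma$, so that $T_t$ restricts to the identity on $\Gamma$; hence $\Gamma_t = \Gamma$ and the Dirichlet datum on the fixed boundary is carried over unchanged to $u^t := u(\Omega_t)\circ T_t$, giving $u^t = 1$ on $\Gamma$.

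The second step is the change of variables $x_t = T_t(x)$. Taking the test function $w = v \circ T_t^{-1}$ for arbitrary $v \in \HHg(\Omega)$ (which ranges over all of $\HHg(\Omega_t)$ as $v$ ranges over $\HHg(\Omega)$, since $T_t$ is a diffeomorphism fixing $\Gamma$), and invoking the standard pull-back identities $(\nabla_{x_t} u(\Omega_t))\circ T_t = (\operatorname{\mathnormal{D}}T_t)^{-\top}\nabla u^t$, $\operatorname{\mathnormal{d}}x_t = I_t\operatorname{\mathnormal{d}}x$, and $\operatorname{\mathnormal{d}}\sigma_t = B_t\operatorname{\mathnormal{d}}\sigma$, the volume term becomes $\intO{A_t \nabla u^t \cdot \nabla \cv}$ with $A_t = I_t(\operatorname{\mathnormal{D}}T_t)^{-1}(\operatorname{\mathnormal{D}}T_t)^{-\top}$ as introduced in the preceding subsection (using $(M a)\cdot(M b) = (M^\top M a)\cdot b$ with $M=(\operatorname{\mathnormal{D}}T_t)^{-\top}$), while the two surface integrals pick up the Jacobian factor $B_t$. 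This produces precisely the identity in \eqref{eq:transformed_state}.

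For existence and uniqueness I would reduce to the homogeneous problem via a lift of the boundary data: fix $u_\Gamma \in \HH^1(\Omega)$ with $u_\Gamma = 1$ on $\Gamma$ and write $u^t = u_\Gamma + \tilde{u}^t$ with $\tilde{u}^t \in \HHg(\Omega)$. Then $\tilde{u}^t$ solves $\aat(\tilde{u}^t, v) = \lambda\intS{B_t \cv} - \aat(u_\Gamma, v)$ for all $v \in \HHg(\Omega)$. The right-hand side is a bounded antilinear functional on $\HHg(\Omega)$ (boundedness of $B_t$ and of $v \mapsto \aat(u_\Gamma, v)$ being immediate from the $L^\infty$ bounds on $A_t, B_t$ and the trace inequality), and $\aat$ is bounded and coercive by Lemma \ref{lem:sesquiliner_form}. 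The complex Lax--Milgram lemma then delivers a unique $\tilde{u}^t$, hence a unique $u^t$, in $\HH^1(\Omega)$.

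The only genuinely technical point is the justification of the change-of-variables formulas on the free boundary, namely the surface-measure transformation $\operatorname{\mathnormal{d}}\sigma_t = B_t\operatorname{\mathnormal{d}}\sigma$ with $B_t = I_t|(\operatorname{\mathnormal{D}}T_t)^{-\top}\nn|$, together with the gradient pull-back; these rest on the $\mathcal{C}^{1,1}$ regularity of $\Omega$ and the diffeomorphism properties of $T_t$ recorded earlier. Once these are in hand, the remaining steps -- the linear reduction, the antilinearity and boundedness of the data functional, and the appeal to coercivity -- are entirely routine, and in particular uniqueness is immediate from Lemma \ref{lem:sesquiliner_form}.
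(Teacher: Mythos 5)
Your proposal is correct and follows essentially the same route as the paper's proof: transport the variational problem on $\Omega_t$ back to $\Omega$ via $T_t$ (using $\VV = 0$ on $\Gamma$ so the Dirichlet datum is preserved), then reduce to a homogeneous problem by lifting the boundary data and apply the complex Lax--Milgram lemma together with the boundedness and coercivity of $\aat$ from Lemma \ref{lem:sesquiliner_form}. The only inessential difference is that the paper takes its lift $u_0$ in $\HH^{1}(U)$ rather than $\HH^{1}(\Omega)$, which changes nothing in the argument.
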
	
\begin{proof}
	The function ${u_t} \in \HH^{1}(\Omega_t)$ satisfies ${u_t} = 1$ on $\Gamma$, and solves the variational problem
	\[
	\intOt{\nabla {u_t} \cdot \nabla \cphi_t}
					+ i \intSt{{u_t} \cphi_t} = \lambda \intSt{\cphi_t}, \quad \forall \varphi_t \in \HHg(\Omega_t),
	\]
	where $\HHg(\Omega_t) = \{\varphi_t \in \HH^{1}(\Omega_t) \mid  \varphi_t = 0 \ \text{on} \ \Gamma\}$.
	Using the relation $u^t = {u_t} \circ T_t := {u_{1t}} \circ T_t + i {u_{2t}} \circ T_t$, the identity $(\nabla \varphi_t) \circ T_t = DT_t^{-\top} \nabla \varphi^t$ which holds for any $\varphi_t \in \HH^{1}(\Omega_t)$ and $\varphi^t \in \HH^{1}(\Omega)$, and the change of variables (cf. \cite[subsec. 9.4.2--9.4.3, pp. 482--484]{DelfourZolesio2011}), we get ${u^t} = 1$ on $\Gamma$ and
	the variational equation above transforms to
	\[
		\intO{A_t \nabla {u^t} \cdot \nabla \cphi^t} + i \intS{B_t {u^t} \cphi^t } = \lambda \intS{B_t \cphi^t}, \quad \forall \varphi^t \in \HHg(\Omega).
	\]

	We immediately get \eqref{eq:transformed_state} by taking $v = \varphi^t$ above.
	Now, from Lemma \ref{lem:sesquiliner_form}, $\aat(\cdot, \cdot) : [\HHg(\Omega)]^2 \to \mathbb{R}$ is bounded and coercive.
	We let $u_0 \in \HH^{1}(U)$ be a fixed function such that $u_0 = 1$ on $\Gamma$.
	Then, $u^t - u_0 \in \HHg(\Omega)$, and by \eqref{eq:transformed_state}, we have
	\begin{align*}
	 &\intO{A_t\nabla {(u^t - u_0)} \cdot \nabla {\cv}}
		+ i \intS{ B_{t} {(u^t - u_0)} {\cv} } \\
			&\qquad =  - \intO{A_t\nabla u_0 \cdot \nabla {\cv}}
						- i \intS{B_{t} u_0 {\cv}}
							+ \lambda \intS{ B_{t} {\cv} }, \quad \forall {v} \in \HHg(\Omega).
	\end{align*}

	For $t>0$ small enough, the following estimates hold
	\begin{align*}
	\left| \intS{ B_{t} u_0 {\cv}} \right|
		&\lesssim |B_{t}|_{\infty} \vertiii{u_0}_{\HH^{1}(U)} \vertiii{v}_{\HH^{1}(\Omega)}, \\
		\left| \lambda \intS{ B_{t} {\cv}} \right|
			&\lesssim |B_{t}|_{\infty} |\Sigma|^{1/2}\vertiii{v}_{\HH^{1}(\Omega)},\\	
	\left| \intO{A_t\nabla u_0 \cdot \nabla {\cv}} \right|
			&\leqslant |A_{t}|_{\infty} \vertiii{u_0}_{\HH^{1}(U)} \vertiii{v}_{\HH^{1}(\Omega)}.
	\end{align*}

	By Lax-Milgram lemma, $z^t=u^t - u_0 \in \HHg(\Omega)$ is the unique solution in $\HHg(\Omega)$ of
	\begin{align*}
	 &\intO{A_t\nabla {z^t} \cdot \nabla {\cv}}
		+ i \intS{ B_{t} {z^t} {\cv} } \\
			&\qquad =  - \intO{A_t\nabla u_0 \cdot \nabla {\cv}}
						- i \intS{B_{t} u_0 {\cv}}
							+ \lambda \intS{ B_{t} {\cv} }, \quad \forall {v} \in \HHg(\Omega).
	\end{align*}

	Now, let $\ut = z^t + u_0 \in \HH^{1}(\Omega)$.
	Then, we have
	\begin{align*}
	&\intO{A_t\nabla \ut \cdot \nabla {\cv}} + i \intS{ B_{t} \ut {\cv} } \\
	&\quad \qquad \qquad =  \intO{A_t\nabla (z^t + u_0) \cdot \nabla {\cv}} + i \intS{ B_{t} (z^t + u_0) {\cv} }\\
	&\quad \qquad  \qquad = \lambda \intS{ B_{t} {\cv}}, \quad \forall {v} \in \HHg(\Omega).
	\end{align*}

	Clearly, $u^t = z^t + 1 = 1$ on $\Gamma$ because $z^t \in \HHg(\Omega)$.
	Uniqueness of $u^t$ follows from the uniqueness of $z^t$.
	That is, $u^t$ uniquely solves \eqref{eq:transformed_state} in $\HH^{1}(\Omega)$.
\end{proof}
\begin{lemma}
	The map $t \mapsto {u^t}$ is $\mathcal{C}^1$ in a neighborhood of $0$.
\end{lemma}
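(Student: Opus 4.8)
The plan is to apply the implicit function theorem to a parametrized family of variational problems, exploiting that the transported state equation \eqref{eq:transformed_state} is affine in the unknown and depends on $t$ only through the coefficients $A_t$ and $B_t$. First I would fix a lift $u_0 \in \HH^{1}(U)$ with $u_0 = 1$ on $\Gamma$, exactly as in Lemma \ref{lem:transported_problem}, and set $z^t := \ut - u_0 \in \HHg(\Omega)$, so that $z^t$ is characterized by
\[
	\aat(z^t, v) = \lambda \intS{B_t \cv} - \aat(u_0, v) =: L_t(v), \qquad \forall v \in \HHg(\Omega).
\]
I would then introduce the map $F \colon \mathcal{I} \times \HHg(\Omega) \to \HHg(\Omega)'$ into the (anti)dual of $\HHg(\Omega)$, defined through $\langle F(t,z), v \rangle = \aat(z, v) - L_t(v)$, so that $F(t, z^t) = 0$ for every $t$ in a neighborhood $\mathcal{I}$ of $0$; the whole task then reduces to showing that $F$ satisfies the hypotheses of the implicit function theorem near $(0, z^0)$.

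Next I would verify those two hypotheses. For the invertibility of the partial differential in $z$: since $F$ is affine in $z$, one has $\partial_z F(t,z)[\zeta] = \aat(\zeta, \cdot\,)$, independently of $z$, and by Lemma \ref{lem:sesquiliner_form} the form $\aat$ is bounded and coercive on $\HHg(\Omega) \times \HHg(\Omega)$ for small $t$; hence, by the complex Lax--Milgram lemma, $\partial_z F(t,z)$ is a topological isomorphism of $\HHg(\Omega)$ onto its dual. For the joint $\mathcal{C}^1$ regularity of $F$: the dependence on $t$ enters only, and linearly, through $A_t$ and $B_t$, so the $\mathcal{C}^1$ regularities $[t \mapsto A_t] \in \mathcal{C}^1(\mathcal{I},\mathcal{C}(\overline{\Omega})^{d\times d})$ and $[t \mapsto B_t] \in \mathcal{C}^1(\mathcal{I},\mathcal{C}(\Sigma))$ recorded above, together with the continuity estimates already invoked in the proof of Lemma \ref{lem:transported_problem}, show that $t \mapsto F(t,z)$ is $\mathcal{C}^1$ with $\partial_t$-derivative obtained by replacing $A_t, B_t$ by $\dot{A}_t, \dot{B}_t$, while $\partial_z F$ depends continuously on $t$ as well. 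Thus $F$ is of class $\mathcal{C}^1$ on $\mathcal{I} \times \HHg(\Omega)$.

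With both hypotheses in hand the implicit function theorem furnishes a $\mathcal{C}^1$ map $t \mapsto z^t$ on a (possibly smaller) neighborhood of $0$, and therefore $t \mapsto \ut = u_0 + z^t$ is $\mathcal{C}^1$ into $\HH^{1}(\Omega)$, which is the claim. The main point requiring care is the functional-analytic setup rather than any single computation: because the parameter $t$ is real while the state space is complex, I would regard $\HHg(\Omega)$ as a \emph{real} Banach space (splitting into real and imaginary parts) so that the real-variable implicit function theorem applies verbatim, and I would ensure that the boundedness and coercivity constants from Lemma \ref{lem:sesquiliner_form} hold uniformly for $t$ in a fixed neighborhood of $0$, so that the isomorphism $\partial_z F(0,z^0)$ genuinely persists nearby.

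As an equivalent alternative that avoids the abstract theorem, I would argue directly with difference quotients: subtracting the equations satisfied by $z^t$ and $z^s$ gives $\aat(z^t - z^s, v) = (L_t - L_s)(v) - (\aat - \mathsf{a}_s)(z^s, v)$, and using the coercivity of $\aat$ together with the Lipschitz-in-$t$ bounds on $\|A_t - A_s\|_\infty$ and $\|B_t - B_s\|_\infty$ yields $\vertiii{z^t - z^s}_{\HHg(\Omega)} \lesssim |t - s|$, hence continuity. Dividing by $t-s$ and passing to the limit, now using the $\mathcal{C}^1$ dependence of $A_t$ and $B_t$, identifies the limit of the difference quotients as the unique solution of a coercive variational problem whose data depend continuously on $t$; the continuity of this solution in $t$ then delivers the $\mathcal{C}^1$ conclusion, and specializing to $t=0$ produces precisely the Lagrangian derivative characterized in Proposition \ref{prop:umaps}.
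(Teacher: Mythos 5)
Your proposal is correct and follows essentially the same route as the paper: an application of the implicit function theorem to an affine map into the (anti)dual of $\HHg(\Omega)$, with invertibility of the partial derivative in the state variable supplied by Lemma \ref{lem:sesquiliner_form} and the complex Lax--Milgram lemma, and joint $\mathcal{C}^1$ regularity coming from the $\mathcal{C}^1$ dependence of $A_t$ and $B_t$. The only (immaterial) difference is your choice of base point -- you shift by the fixed lift $u_0$ rather than by the $t=0$ solution $u$ as the paper does -- and your explicit remarks on viewing $\HHg(\Omega)$ as a real Banach space and on the uniformity of the coercivity constants are welcome refinements of points the paper leaves implicit.
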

\begin{proof}
	We prove the statement using the implicit function theorem (IFT).
	In view of \eqref{eq:transformed_state}, we see that ${u^t} - {u}$ is the unique element in $\HHg(\Omega)$ that satisfies
	 \begin{align*}
	&\intO{A_t\nabla ({u^t} - {u}) \cdot \nabla {\cv}} + i \intS{ B_{t} ({u^t} - {u}) {\cv} } \\
	 	&\qquad\qquad =  - \intO{A_t\nabla {u} \cdot \nabla {\cv}} - i \intS{ B_{t} {u} {\cv} }  + \lambda  \intS{B_{t} {\cv}},
		\quad \forall {v} \in \HHg(\Omega).
	 \end{align*}
	 Denoting by $\langle \cdot , \cdot \rangle$ the duality pairing between $\HHg(\Omega)$ and its dual space $\HHg'(\Omega)$, we consider the function $\sfPsi : \mathcal{I} \times \HHg(\Omega) \to \HHg'(\Omega)$ defined by
	 \[
	 	\langle \sfPsi(t,w), v \rangle := \intO{A_t \nabla (w+{u}) \cdot \nabla {\cv}} + {i} \intS{ B_{t} (w+{u}) {\cv} } - \lambda \intS{ B_{t} {\cv}},
	 \]
	 for all $v$, $w\in \HHg(\Omega)$.
	 Clearly, $\sfPsi$ is $\mathcal{C}^1$ because $t \mapsto A_t$ and $t \mapsto B_t$ are $\mathcal{C}^1$ in a neighborhood of $0$.
	 We observe that ${u^t} - {u}$ uniquely solves $\sfPsi(t, {u^t} - {u}) = 0$ in $\HHg(\Omega)$.
	 In addition, $\langle \operatorname{\mathnormal{D}}\!_{w}\sfPsi(0,0)w, v \rangle = \aat(w,v)$. 
	 Using Lemma \ref{lem:sesquiliner_form}, we deduce via the complex version of Lax-Milgram lemma that $\operatorname{\mathnormal{d}}\!_{w}\sfPsi(0,0)$ is an isomorphism from $\HHg(\Omega)$ to $\HHg'(\Omega)$.
	 By the IFT, we conclude that the map $t \mapsto {u^t} - {u}$ is $\mathcal{C}^1$ in a neighborhood of $0$.
	 Now let $\dotu \in \HHg(\Omega)$ be its derivative at $t=0$.
	 Differentiating $\sfPsi(t, {u^t} - {u}) = 0$ with respect to $t$, we obtain
	 $\left\langle \operatorname{\mathnormal{D}}\!_{w}\sfPsi(0,0)\dotu, v \right\rangle + \left\langle \frac{\partial }{\partial t}\sfPsi(0,0), v \right\rangle = 0$,
	 for all $v \in \HHg(\Omega)$, leading to \eqref{eq:Lagrangian_derivative_of_the_state}.	 
\end{proof}
%
\subsection{Computation of the shape gradient}
\label{subsec:shape_derivative_of_the_cost_using_the_Eulerian_derivatives}
We now prove the differentiability of the map $t \mapsto J(\Omega_t)$ and characterize its derivative.

\begin{theorem}[Shape gradient of $J$]
	\label{prop:Jmaps}
	Let $\Omega$ be of class $\mathcal{C}^{1,1}$ and $\VV \in \sfTheta^1$.
	The map $t \mapsto J(\Omega_t)$ is $\mathcal{C}^1$ in a neighborhood of $0$, and its derivative at $0$ is given by ${\operatorname{\mathnormal{d}}}J(\Omega)[\VV] = \intS{\GG \nu \cdot \VV}$ where
	\begin{equation}\label{eq:shape_gradient}
	\begin{aligned}
		\GG &= \frac12 |\ui|^2  - \Big[  \nabla_{\Sigma} \vr \cdot \nabla_{\Sigma} \ui - \nabla_{\Sigma} \vi \cdot \nabla_{\Sigma} \ur \\
			& \qquad\qquad\qquad + \vr \left( \dn{\ur} + \kappa \ur \right) + \vi \left( \dn{\ui} + \kappa \ui \right) + \lambda \kappa \vi  \Big],
	\end{aligned}
	\end{equation}
	$\kappa$ stands for the mean curvature of the free boundary $\Sigma$, $u = \ur + i \ui$ is the unique solution to \eqref{eq:complexPDE}, and $p = \vr + i \vi$ uniquely solves the adjoint system
	\begin{equation}\label{eq:adjoint_system}
		-\Delta p 		=\ui \ \text{in $\Omega$},\qquad\quad
		p 			=0 \ \text{on $\Gamma$},\qquad\quad
		\dn{p} - i p		=0 \ \text{on $\Sigma$}.
	\end{equation}		
\end{theorem}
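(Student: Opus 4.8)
The plan is to transport the moving-domain integral back to the fixed reference domain $\Omega$, differentiate in $t$, remove the Lagrangian derivative of the state through the adjoint system \eqref{eq:adjoint_system}, and finally collapse the resulting volume expression into a boundary integral over $\Sigma$ by tangential calculus. First I would use the change of variables $x_t = T_t(x)$ to write $J(\Omega_t) = \tfrac12\intO{|\ui^t|^2 I_t}$, where $\ui^t = \operatorname{Im}(\ut)$ and $I_t = \det \operatorname{\mathnormal{D}}\!T_t$. Since Proposition \ref{prop:umaps} gives the $\mathcal{C}^1$-regularity of $t \mapsto \ut \in \HH^1(\Omega)$ and $t \mapsto I_t$ is $\mathcal{C}^1$ with $\dot I_t\big|_{t=0} = {\operatorname{div}}\,\VV$, the composite $t \mapsto J(\Omega_t)$ is $\mathcal{C}^1$ near $0$, and differentiating under the integral yields ${\operatorname{\mathnormal{d}}}J(\Omega)[\VV] = \intO{\ui\,\dot{u}_{2}} + \tfrac12\intO{|\ui|^2\,{\operatorname{div}}\,\VV}$, where $\dot u_2 = \operatorname{Im}(\dotu)$.

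The essential step is to eliminate $\dotu$ from the first term via the adjoint. Writing $\intO{\ui\dot u_2} = -\operatorname{Im}\intO{\ui\,\overline{\dotu}}$ (legitimate since $\ui$ is real), I would compute $\intO{\ui\,\overline{\dotu}}$ in closed form as follows: test the weak form of \eqref{eq:adjoint_system}, namely $\intO{\nabla p\cdot\nabla\overline{w}} - i\intS{p\,\overline{w}} = \intO{\ui\,\overline{w}}$, with $w = \dotu$, and test the complex conjugate of the material-derivative identity \eqref{eq:Lagrangian_derivative_of_the_state} with $v = p$. The two left-hand sides then coincide, because the Dirichlet product $\nabla p\cdot\nabla\overline{\dotu}$ and the boundary product $p\,\overline{\dotu}$ are symmetric; equating the right-hand sides gives
\[
\intO{\ui\,\overline{\dotu}} = -\intO{A\nabla\cu\cdot\nabla p} + i\intS{({\operatorname{div}}_{\Sigma}\VV)\,\cu\,p} + \lambda\intS{({\operatorname{div}}_{\Sigma}\VV)\,p},
\]
which is free of $\dotu$; here $A = ({\operatorname{div}}\,\VV)\vect{I} - \operatorname{\mathnormal{D}}\!\VV - (\operatorname{\mathnormal{D}}\!\VV)^\top$. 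This realizes the advantage announced in the introduction: only the single complex adjoint \eqref{eq:adjoint_system} is needed, and only the weak Lagrangian derivative of the state enters.

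The hard part will be the final reduction to the Hadamard form $\intS{\GG\,\nu\cdot\VV}$. For the volume term $\intO{A\nabla\cu\cdot\nabla p}$ I would invoke the pointwise identity $A\nabla\phi\cdot\nabla\psi = {\operatorname{div}}\big[(\nabla\phi\cdot\nabla\psi)\VV - (\VV\cdot\nabla\phi)\nabla\psi - (\VV\cdot\nabla\psi)\nabla\phi\big] + (\VV\cdot\nabla\phi)\Delta\psi + (\VV\cdot\nabla\psi)\Delta\phi$, applied with $\phi = \cu$ (harmonic) and $\psi = p$ (so $\Delta p = -\ui$). After integrating and dropping the $\Gamma$-contributions (where $\VV = 0$), the residual volume integral it produces cancels exactly against the $-\intO{\ui\,\nabla\ui\cdot\VV}$ arising from rewriting $\tfrac12\intO{|\ui|^2{\operatorname{div}}\,\VV}$ through ${\operatorname{div}}(\ui^2\VV)$, so that only integrals over $\Sigma$ remain. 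On $\Sigma$ I would restrict to purely normal fields $\VV = \Vn\nn$ (legitimate by the structure theorem, as only $\nu\cdot\VV$ survives), decompose gradients as $\nabla f = \nabla_{\Sigma}f + (\dn{f})\nn$ so that $(\nabla\cu\cdot\nabla p)\Vn - (\VV\cdot\nabla\cu)\dn{p} - (\VV\cdot\nabla p)\dn{\cu} = \Vn\big[\nabla_{\Sigma}\cu\cdot\nabla_{\Sigma}p - \dn{\cu}\,\dn{p}\big]$, and use ${\operatorname{div}}_{\Sigma}\VV = \kappa\Vn$ to convert each tangential-divergence factor into a mean-curvature factor. Taking imaginary parts, substituting the boundary conditions of \eqref{eq:real_state}, \eqref{eq:imaginary_state} and \eqref{eq:adjoint_system} (that is, $\dn{\ur} = \ui + \lambda$, $\dn{\ui} = -\ur$, $\dn{\vr} = -\vi$, $\dn{\vi} = \vr$), and collecting terms then delivers \eqref{eq:shape_gradient}. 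The main obstacle is precisely this tangential-calculus bookkeeping, keeping the real/imaginary splitting consistent while the curvature terms emerge from ${\operatorname{div}}_{\Sigma}\VV$, together with verifying that the $\dotu$-dependent volume contributions genuinely cancel; once that cancellation is secured, the identification of $\GG$ becomes a direct computation.
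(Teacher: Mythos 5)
Your proposal is correct and follows essentially the same route as the paper's proof: transport to the fixed domain and differentiate using Proposition \ref{prop:umaps}, eliminate $\dotu$ by pairing the adjoint weak form (tested with $\dotu$) against the material-derivative identity \eqref{eq:Lagrangian_derivative_of_the_state} (tested with $p$), and reduce to the boundary via the integrated form of your pointwise identity for $A\nabla\phi\cdot\nabla\psi$ --- which is precisely the paper's expansion \eqref{eq:expansion} underlying Lemma \ref{eq:integral_equivalence} --- followed by the same cancellation of the volume term $\intO{\ui \VV\cdot\nabla\ui}$ and the same tangential-calculus bookkeeping. The only cosmetic deviations are that you work with the conjugate quantity $\intO{\ui\,\overline{\dotu}}$ and restrict to purely normal fields (invoking the structure theorem, consistent with the paper's standing convention $\VV|_{\Sigma}=\Vn\nn$), whereas the paper handles general $\VV$ directly through the tangential Green's formula \eqref{eq:tangential_Greens_formula}; neither affects correctness.
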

\begin{remark}
	The weak formulation of \eqref{eq:adjoint_system} reads as follows:	
	\begin{equation}\label{eq:adjoint_equation}
	\text{	find $p \in \HHg(\Omega)$ such that} \intO{ \nabla p \cdot \nabla {\cphi}} - i \intS{ p {\cphi} } = \intO{ \ui {\cphi} }, \ \forall \varphi \in \HHg(\Omega).
	\end{equation}
	The existence and uniqueness of solution to the variational problem \eqref{eq:adjoint_equation} is again a consequence of the Lax-Milgram lemma.
	\end{remark}
%

\begin{remark}\label{rem:higher_regularity_of_the_adjoint}
	In general, for $\Omega$ of class $\mathcal{C}^{k+1,1}$, $k$ a non-negative integer, it can be shown that the weak solution $u \in \HH^{1}(\Omega)$ to the variational problem \eqref{eq:state_weak_form} is also $\HH^{k+2}(\Omega)$.
	In particular, $u_{2} \in H^{k+2}(\Omega)$.
	Consequently, we find that the weak solution $p$ of problem \eqref{eq:adjoint_equation} is not only in $\HH^{1}(\Omega)$, but is also an element of $\HH^{k+4}(\Omega)$.
\end{remark}

Theorem \ref{prop:Jmaps} relies on following lemma.
The $\HH^2(\Omega)$ regularity of ${u}$, which holds true since $\Omega$ is assumed to be $\mathcal{C}^{1,1}$ and $\VV \in \sfTheta^1$, will be used subsequently without further notice.	
\begin{lemma}
\label{eq:integral_equivalence}
	Let $\Omega$ be of class $\mathcal{C}^{1,1}$ and $\VV \in \sfTheta^1$.
	The solution ${u}$ of the state problem \eqref{eq:complexPDE} and the adjoint variable $p$ which is the solution to \eqref{eq:adjoint_system} satisfy the equation
	\begin{equation} \label{eq:domain_identity}
	\begin{aligned}
	\intO{A \nabla {u} \cdot \nabla \overline{p}}  
	&= - \intO{ u_2 \VV \cdot \nabla {u} } + \intS{(\nabla \overline{{p}} \cdot \nabla {u})\Vn} 
		+ i \intS{{u}(\VV \cdot \nabla \overline{{p}})}\\
	&\qquad\quad + i \intS{ \overline{p} (\VV \cdot \nabla {u})} - \lambda \intS{ (\VV \cdot \nabla \overline{{p}})}.
	\end{aligned}
	\end{equation}
\end{lemma}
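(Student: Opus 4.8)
The plan is to derive \eqref{eq:domain_identity} from a single algebraic–differential identity for the tensor $A$, followed by the divergence theorem, after which the stated boundary terms emerge upon inserting the boundary conditions carried by $u$ and $p$. The starting point is the pointwise identity, valid for any scalar fields $\phi,\psi \in \HH^2(\Omega)$ and any $\VV \in \sfTheta^1$,
\[
A\nabla\phi\cdot\nabla\psi = {\operatorname{div}}\big[ (\nabla\phi\cdot\nabla\psi)\VV - (\VV\cdot\nabla\phi)\nabla\psi - (\VV\cdot\nabla\psi)\nabla\phi \big] + (\VV\cdot\nabla\phi)\Delta\psi + (\VV\cdot\nabla\psi)\Delta\phi,
\]
which one obtains by expanding the divergence of the bracketed field using $A = ({\operatorname{div}}\VV)\vect{I} - \operatorname{\mathnormal{D}}\!\VV - (\operatorname{\mathnormal{D}}\!\VV)^\top$ and cancelling the two Hessian contractions $\VV\cdot(\operatorname{\mathnormal{D}}^2\phi)\nabla\psi$ and $\VV\cdot(\operatorname{\mathnormal{D}}^2\psi)\nabla\phi$ that arise. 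Since $\Omega$ is $\mathcal{C}^{1,1}$, both $u$ and $p$ lie in $\HH^2(\Omega)$, so all second-order terms are genuine $L^2$ functions and the divergence theorem applies to the bracketed field.

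Next I would integrate this identity over $\Omega$ with the choice $\phi = u$ and $\psi = \overline{p}$. The two interior Laplacian terms simplify at once: $\Delta u = 0$ by \eqref{eq:complexPDE}, while $\Delta\overline{p} = \overline{\Delta p} = -\ui$ by \eqref{eq:adjoint_system} together with the fact that $\ui$ is real-valued. Hence the interior contribution collapses to precisely $-\intO{\ui\,\VV\cdot\nabla u}$, the first term on the right of \eqref{eq:domain_identity}.

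Applying the divergence theorem to the bracketed vector field produces a boundary integral over $\partial\Omega = \Gamma\cup\Sigma$ with integrand $(\nabla u\cdot\nabla\overline{p})(\VV\cdot\nn) - (\VV\cdot\nabla u)\dn{\overline{p}} - (\VV\cdot\nabla\overline{p})\dn{u}$. Because $\VV = 0$ on $\Gamma$, only the $\Sigma$-part survives, and there $\VV\cdot\nn = \Vn$ yields the term $\intS{(\nabla\overline{p}\cdot\nabla u)\Vn}$. It then remains to convert the two normal-derivative terms by invoking the Robin-type conditions on $\Sigma$: $\dn{u} = \lambda - iu$ from \eqref{eq:complexPDE}, and $\dn{\overline{p}} = \overline{\dn{p}} = -i\overline{p}$ from \eqref{eq:adjoint_system}. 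Substituting these turns $-(\VV\cdot\nabla u)\dn{\overline{p}}$ into $i\overline{p}(\VV\cdot\nabla u)$ and $-(\VV\cdot\nabla\overline{p})\dn{u}$ into $iu(\VV\cdot\nabla\overline{p}) - \lambda(\VV\cdot\nabla\overline{p})$, which are exactly the remaining three surface terms.

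The only genuinely delicate point is the first step: justifying the divergence identity and the subsequent integration by parts at merely $\HH^2$ regularity, since the bracketed field involves products of first derivatives whose normal traces on $\Sigma$ must be meaningful. This is supplied by the $\mathcal{C}^{1,1}$ hypothesis on $\Omega$, which guarantees $u,p\in\HH^2(\Omega)$, hence $\nabla u,\nabla p \in \HH^1(\Omega)^d$ with well-defined $L^2(\Sigma)$ traces. The pointwise identity itself is a routine, if lengthy, expansion that I would summarize as ``by direct computation'' once the Hessian cancellations are recorded; everything else is algebraic bookkeeping with the boundary data of the state and adjoint problems.
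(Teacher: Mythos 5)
Your proof is correct and follows essentially the same route as the paper: the paper invokes the integration-by-parts formula \eqref{eq:expansion} (cited from Delfour--Zol\'esio) with $\varphi = u$, $\psi = p$, and then substitutes $\Delta u = 0$, $-\Delta \overline{p} = \ui$, $\dn{u} = \lambda - iu$, and $\dn{\overline{p}} = -i\overline{p}$ on $\Sigma$, exactly as you do. The only difference is that you derive \eqref{eq:expansion} yourself from the pointwise divergence identity (your expansion and Hessian cancellations check out, and your $\HH^2$-regularity justification of the divergence theorem matches the paper's standing assumption), whereas the paper simply cites the formula.
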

\begin{proof}
The result follows from the formula
\begin{equation}\label{eq:expansion}
\begin{aligned}
	\intO{A\nabla \varphi \cdot \nabla \overline{\psi} }
	&= \intO{ (\Delta \varphi) \VV \cdot \nabla \overline{\psi} }
		+ \intO{ (\Delta \overline{\psi}) \VV \cdot \nabla \varphi }
		- \intS{\dn{\varphi}(\VV \cdot \nabla \overline{\psi})} \\
	&\quad \qquad 
		 - \intS{\dn{\overline{\psi}}(\VV \cdot \nabla \varphi)}
		+ \intS{(\nabla \overline{\psi} \cdot \nabla \varphi)\Vn},
\end{aligned}
\end{equation}
which holds for all functions $\varphi$, $\psi \in \HHg(\Omega) \cap \HH^2(\Omega)$ and $\VV \in \sfTheta^1$, where $A = ({\operatorname{div}}\VV)\vect{I} -  \operatorname{\mathnormal{D}}\!\VV - (\operatorname{\mathnormal{D}}\!\VV)^\top$ (see, e.g., \cite{DelfourZolesio2011}). 
Letting $\varphi = {u}$ and $\psi = p$, and noting that $\dn{\overline{p}} = -i \overline{p}$ and $\dn{u} = -i {u} + \lambda$ on $\Sigma$ and that $-\Delta \overline{p} = u_2$ in $\Omega$, we obtain
\[
	\begin{aligned}
	\intO{A \nabla {u} \cdot \nabla \overline{p}}  
	&= - \intO{ u_2 \VV \cdot \nabla {u} } + i \intS{{u}(\VV \cdot \nabla \overline{{p}})} + i \intS{ \overline{p} (\VV \cdot \nabla {u})}\\
	&\qquad - \lambda \intS{ (\VV \cdot \nabla \overline{{p}})} + \intS{(\nabla \overline{{p}} \cdot \nabla {u})\Vn},
	\end{aligned}
\]
as desired.
This proves the lemma.
\end{proof}
%
%
%
\begin{proof}[Proof of Theorem \ref{prop:Jmaps}]
By change of variables, we write $J(\Omega_t) = \frac12 \intO{ I_t |u_2^t|^2}$.
Because $[t \mapsto I_t] \in \mathcal{C}^1(\mathcal{I},\mathcal{C}(\overline{\Omega}))$ and $[t \mapsto \ut] \in \mathcal{C}^1(\mathcal{I},\HH^{1}(\Omega))$, then the mapping $t\mapsto J(\Omega_t)$ is also $\mathcal{C}^1$ in the neighborhood $\mathcal{I}$ of $0$, and we have
\begin{equation}\label{eq:material_derivative_of_the_cost}
	{\operatorname{\mathnormal{d}}}J(\Omega)[\VV]
		= \frac12 \intO{ ({\operatorname{div}}\, \VV) |\ui|^2} + \intO{ \ui \dotu{}_2 }.
\end{equation}
On the one hand, using the identity $-\intO{({\operatorname{div}}\,\vect{v}) \phi} = \intO{\vect{v} \cdot \nabla \phi} - \intS{\phi(\vect{v} \cdot \nu)}$, which holds (on bounded Lipschitz domain $\Omega$) for any vector field $\vect{v} \in \mathcal{C}^{1}(\overline{\Omega})^{2}$ and scalar function $\phi \in W^{1,1}(\Omega)$, the first integral can be equivalently written as
\begin{equation}\label{eq:first_integral}
	\frac12 \intO{({\operatorname{div}}\,\VV) |\ui|^2} 
	= - \intO{\ui \VV \cdot \nabla \ui } + \frac12 \intS{ |\ui|^2\Vn}.
\end{equation}
On the other hand, the second integral is actually not useful for practical applications, especially in the numerical realization of the present shape minimization problem because it requires the solution of \eqref{eq:Lagrangian_derivative_of_the_state} for each velocity field $\VV$.
A way to resolve this issue is to rewrite the integrand in terms of another variable -- getting rid of the term $\dotu{}$ -- through the adjoint method.
To this end, we utilize the variational problem \eqref{eq:adjoint_equation} with the test function $\varphi = \dotu{} \in \HHg(\Omega)$ to obtain
\begin{equation}\label{eq:identity_one}
\intO{ \ui \overline{\dotu{}} } = \intO{ \nabla p \cdot \nabla \overline{\dotu{}} } - i \intS{ p \overline{\dotu{}}}.
\end{equation}
Now, choosing $v = p \in \HHg(\Omega)$ as the test function in \eqref{eq:Lagrangian_derivative_of_the_state} gives us
\begin{equation}\label{eq:identity_two}
	\begin{aligned}
	\intO{\nabla {\dotu} \cdot \nabla {\overline{p}}}  + i \intS{\dotu {\overline{p}} }  
		&= -\intO{A \nabla {u} \cdot \nabla {\overline{p}}} - i \intS{ ({\operatorname{div}}_{\Sigma} \VV) {u} {\overline{p}}}\\
		&\qquad + \lambda \intS{ ({\operatorname{div}}_{\Sigma} \VV) {\overline{p}}}.
	\end{aligned}	
\end{equation}
Hence, taking the complex conjugate of both sides of \eqref{eq:identity_one} and then comparing the resulting equation to \eqref{eq:identity_two} leads us to the identity
\[
	\begin{aligned}
	\intO{ \ui \dotu{} }  
		&= -\intO{A \nabla {u} \cdot \nabla {\overline{p}}} - i \intS{ ({\operatorname{div}}_{\Sigma} \VV) {u} {\overline{p}}} + \lambda \intS{ ({\operatorname{div}}_{\Sigma} \VV) {\overline{p}}}.
	\end{aligned}	
\]
Applying \eqref{eq:domain_identity} in Lemma \ref{eq:integral_equivalence}, we can further write
\[
	\begin{aligned}
	\intO{ \ui \dotu{} }  
		&=  \intO{ u_2 \VV \cdot \nabla {u} } -i \intS{ \overline{p} (\VV \cdot \nabla {u})} - i \intS{{u}(\VV \cdot \nabla \overline{{p}})} \\
		&\qquad - i \intS{ {u} {\overline{p}}\, {\operatorname{div}}_{\Sigma} \VV} - \intS{(\nabla \overline{{p}} \cdot \nabla {u})\Vn}\\
		&\qquad + \lambda \intS{ \nabla \overline{{p}} \cdot \VV } + \lambda \intS{ {\overline{p}}\, {\operatorname{div}}_{\Sigma} \VV }.
	\end{aligned}	
\]
At this point we apply the following version of the tangential Green's formula\footnote{A proof of this formula can be found in \cite{MuratSimon1976}.}, which is valid, for instance, when $\Sigma$ is $\mathcal{C}^{1,1}$,
\begin{equation}
	\label{eq:tangential_Greens_formula}
	\intS{\left( \nabla \phi \cdot \VV + \phi \, {\operatorname{div}}_{\Sigma} \VV \right) } 
		=  \intS{ \left( \dn{\phi}  +\phi\, {\operatorname{div}}_{\Sigma} \nn \right) {\Vn}}.
\end{equation}
Here, the function $\phi$ is supposed to be $W^{2,1}(U)$ regular.
We also note the fact that $\kappa =  {\operatorname{div}}_{\Sigma} \nn$, where $\kappa$ is the mean curvature of $\Sigma$, and utilize the identity $\nabla \overline{{p}} \cdot \nabla {u} = \nabla_{\Sigma} \overline{{p}} \cdot \nabla_{\Sigma} {u} + \dn{\overline{p}} \dn{u}$\footnote{Here $\nabla_{\Sigma}$ is the tangential gradient operator on $\Sigma$. The intrinsic definition of the operator is given in \cite[Chap. 5., Sec. 5.1, p. 492]{DelfourZolesio2011}. More discussion on tangential calculus can be found in the same referenced text.} on $\Sigma$ to obtain
\[
	\begin{aligned}
	\intO{ \ui \dotu{} }  
		&=  \intO{ u_2 \VV \cdot \nabla {u} } - i \intS{ \overline{p}  \left( \dn{u} + \kappa u \right) \Vn} \\
		&\qquad -  \intS{\nabla_{\Sigma} \overline{{p}} \cdot \nabla_{\Sigma} {u} \Vn}
				 + \lambda  \intS{ \kappa \overline{p} \Vn }.
	\end{aligned}	
\]
The desired expression immediately follows by comparing the respective complex parts on both sides of the above equation, and together with \eqref{eq:first_integral}.
\end{proof}
%
%
%
The next conclusion can be drawn easily from \eqref{eq:shape_gradient}, \eqref{eq:adjoint_system}, and Remark \ref{rem:equivalence}.
\begin{corollary}[Necessary condition]\label{cor:necessary_condition}
	Let the domain $\Omega^\ast$ be such that the state $u=u(\Omega^\ast)$ satisfies the overdetermined boundary-value problem \eqref{eq:Bernoulli_problem}, i.e., there holds
	\begin{equation}
	\label{eq:imaginary_is_zero}
		\ui = 0 \quad \text{on $\Omega^\ast$}. 
	\end{equation}
	Then, the domain $\Omega^\ast$ is stationary for the shape problem $\frac12 \intO{|\ui|^2} \to \inf$, where $\ui$ is subject to \eqref{eq:imaginary_state}.
	That is, it fulfills the necessary optimality condition
	\begin{equation}
	\label{eq:optimality_condition}
		{\operatorname{\mathnormal{d}}}J(\Omega^\ast)[\VV] = 0, \quad \text{for all $\VV \in \sfTheta^{2}$}.
	\end{equation}
\end{corollary}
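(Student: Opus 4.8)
The plan is to substitute the hypothesis \eqref{eq:imaginary_is_zero} directly into the shape gradient formula \eqref{eq:shape_gradient} and to show that the integrand $\GG$ vanishes identically on $\Sigma$. The starting point is Theorem \ref{prop:Jmaps}, which yields ${\operatorname{\mathnormal{d}}}J(\Omega^\ast)[\VV] = \intS{\GG \nu \cdot \VV}$; it therefore suffices to prove $\GG \equiv 0$ on $\Sigma$, and the optimality condition \eqref{eq:optimality_condition} follows at once for every $\VV \in \sfTheta^{2}$.

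First I would extract the boundary behaviour of the state from Remark \ref{rem:equivalence}. Since $\ui = 0$ throughout $\Omega^\ast$, one has $\ui = \dn{\ui} = 0$ and $\ur = 0$ on $\Sigma$. In particular the tangential gradients $\nabla_{\Sigma} \ui$ and $\nabla_{\Sigma} \ur$ both vanish on $\Sigma$ -- the former because $\ui \equiv 0$, the latter because $\ur$ is constant (equal to zero) along $\Sigma$.

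Next I would argue that the adjoint variable vanishes as well. With $\ui = 0$ in $\Omega^\ast$, the source on the right-hand side of the adjoint system \eqref{eq:adjoint_system} is zero, so $p$ solves the \emph{homogeneous} problem $-\Delta p = 0$ in $\Omega^\ast$, $p = 0$ on $\Gamma$, $\dn{p} - i p = 0$ on $\Sigma$. By the uniqueness furnished by the complex Lax--Milgram lemma applied to the variational problem \eqref{eq:adjoint_equation}, this forces $p \equiv 0$, whence $\vr = \vi = 0$ in $\Omega^\ast$ and consequently $\vr = \vi = 0$ together with $\nabla_{\Sigma} \vr = \nabla_{\Sigma} \vi = 0$ on $\Sigma$.

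Finally I would read off each term of \eqref{eq:shape_gradient}: the leading term $\tfrac12|\ui|^2$ vanishes since $\ui = 0$ on $\Sigma$; both tangential-gradient products $\nabla_{\Sigma} \vr \cdot \nabla_{\Sigma} \ui$ and $\nabla_{\Sigma} \vi \cdot \nabla_{\Sigma} \ur$ vanish (each having a null factor); the terms $\vr(\dn{\ur} + \kappa \ur)$ and $\vi(\dn{\ui} + \kappa \ui)$ vanish because $\vr = \vi = 0$; and $\lambda \kappa \vi = 0$ for the same reason. Hence $\GG \equiv 0$ on $\Sigma$, so ${\operatorname{\mathnormal{d}}}J(\Omega^\ast)[\VV] = 0$ for all $\VV \in \sfTheta^{2}$, which is precisely \eqref{eq:optimality_condition}. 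The argument presents no genuine obstacle -- it is a direct verification -- and the only step requiring a moment's care is the vanishing of the adjoint $p$, which hinges solely on the homogeneity of \eqref{eq:adjoint_system} once the source $\ui$ is removed, combined with the well-posedness already established for \eqref{eq:adjoint_equation}.
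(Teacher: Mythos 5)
Your proposal is correct and takes essentially the same route as the paper's own proof: the hypothesis $\ui = 0$ in $\Omega^\ast$ makes the adjoint system \eqref{eq:adjoint_system} homogeneous, so uniqueness (Lax--Milgram applied to \eqref{eq:adjoint_equation}) forces $p \equiv 0$, and then every term of $\GG$ in \eqref{eq:shape_gradient} vanishes on $\Sigma^\ast$, giving ${\operatorname{\mathnormal{d}}}J(\Omega^\ast)[\VV] = 0$. You merely spell out the details that the paper compresses into one line (its statement ``$p=p(\Omega^\ast)$'' is evidently intended to mean $p \equiv 0$ on $\Omega^\ast$), so there is no substantive difference in approach.
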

\begin{proof}
	By the assumption that $\ui = 0$ on $\Omega^\ast$, one finds that $p=p(\Omega^\ast)$ on $\Omega^\ast$.
	Thus, it follows that $\GG \equiv 0$ on $\Sigma^\ast$ which implies that ${\operatorname{\mathnormal{d}}}J(\Omega^\ast)[\VV] = 0$, for any $\VV \in \sfTheta^1$.
\end{proof}
\begin{remark}
	In connection with the previous result, we remark that solutions of the necessary condition \eqref{eq:optimality_condition} might exist such that the state does not satisfy equation \eqref{eq:imaginary_is_zero}.
	However, only in the case of exact matching of boundary data a stationary domain $\Omega^\ast$ is a global minimum because $J(\Omega^\ast)=0$.
\end{remark}
\begin{remark}\label{rem:detour_remark}
To obtain the form ${\operatorname{\mathnormal{d}}}J(\Omega)[\VV] = \intS{\GG \nu \cdot \VV}$ -- in accordance with Hadamard--Zol\'{e}sio's structure theorem -- via the chain rule approach, the authors in \cite{EpplerHarbrecht2009,EpplerHarbrecht2010,EpplerHarbrecht2012a,RabagoAzegami2019b,RabagoAzegami2020} utilize the strong form of the equation as well as the boundary conditions satisfied by the shape derivative of $u$.
The existence of the said derivative, however, requires more regularity on $u$, in addition to assuming that the domain is at least $\mathcal{C}^{2,\alpha}$ ($\alpha \in (0,1]$).
Allowing this higher regularity assumptions and taking the shape differentiability of the state $u$ for granted, an alternative and more direct argument to derive the shape derivative of $J$ could be given, see Appendix \ref{subsec:shape_derivatives_of_the_cost}.
The derivation of the shape gradient of the cost issued above is based on the Lagrangian derivative $\dotu{}$ which is rigorously accounted for in the proof of Theorem \ref{prop:umaps}.
We emphasize that only the weak form of the Lagrangian derivatives is utilized. 
This bypasses the need to use the strong form of the shape derivative of $u$.
Alternatively, Theorem \ref{prop:Jmaps} could be shown via the strong form of the equation, and of the Eulerian derivatives of the state, as well as the boundary conditions satisfied by the shape derivative of ${u}$ coupled with Hadamard's domain differentiation formula (see, e.g., \cite[Thm. 4.2, p. 483]{DelfourZolesio2011}), \cite[eq. (5.12), Thm. 5.2.2, p. 194]{HenrotPierre2018} or \cite[eq. (2.168), p. 113]{SokolowskiZolesio1992}):
\begin{align}
	\left. \left\{ \frac{\operatorname{\mathnormal{d}}}{\operatorname{\mathnormal{d}}t} \intOt{f(t,x)} \right\} \right|_{t=0}
		&= \intO{\frac{\partial }{\partial t} f(0,x)}
			+ \intdO{ f(0,\sigma) \Vn}. \label{eq:Hadamard_domain_formula}
\end{align}
\end{remark}
\section{Instability analysis of the critical shape}
\label{sec:instability_analysis_of_the_critical_shape}
In this section, we investigate the question of stability of the proposed shape optimization formulation of \eqref{eq:Bernoulli_problem} at a critical shape $\Omega^\ast$.
To do this, we need the expression for the shape Hessian of $J$ at $\Omega^\ast$ which we exhibit in subsection \ref{subsec:shape_Hessian}.
Afterwards, we deal with the stability condition of the present shape optimization problem in subsection \ref{subsec:instability_analysis}.
A quick review of the sufficient second-order optimality condition for the present shape optimization problem is given first in the following subsection.
\subsection{Sufficient Conditions}
Before we proceed to the computation of the shape Hessian, we briefly provide here a rough discussion about the regular local optimality of second-order of the critical shape $\Omega^{\ast}$ (the optimal solution of the proposed shape problem) on the basis of \cite[Thm. 3.3]{EpplerHarbrechtSchneider2007}.
Hereinafter, whenever we state the phrase `$\mathcal{C}^{\circ}$ regular/smooth' we mean that the object it describes is either $\mathcal{C}^{0,1} \cap \mathcal{C}^{2,\alpha}$ regular, $\alpha > 0$, or simply $\mathcal{C}^{2,1}$ regular.
Similar definition is given for $\mathcal{C}^{\circ}(\Omega)$ and $\mathcal{C}^{\circ}(\Sigma)$.
Moreover, $\vect{\mathcal{C}}^{\circ}(\cdot):=[\mathcal{C}^{\circ}(\cdot)]^{2}$ (e.g., $\vect{\mathcal{C}}^{2,\alpha}(\Omega):=[ \mathcal{C}^{2,\alpha}(\Omega) ]^{2}$).
In the sequel, we say that $\Omega \in \mathcal{C}^{\circ}$ is an \textit{admissible} perturbation of $\widehat{\Omega} \in \mathcal{C}^{\circ}$ if for some fixed small number $\delta \in (0,1)$ the following inequality condition holds:
\[
	\|\phi - \hat{\phi}\|_{\mathcal{C}^{\circ}(\mathbb{R};\mathbb{R}^{2})} < \delta,
\]
where $\phi, \hat{\phi} \in \mathcal{C}^{\circ}(\mathbb{R};\mathbb{R}^{2})$ are respectively a periodic parametrization of the free boundary of $\Omega$ and $\widehat{\Omega}$.
We denote the collection of such perturbations of $\widehat{\Omega}$ by $U_{\delta}({\hat{\phi}})$.
Note that, essentially, $U_{\delta}({\hat{\phi}})$ is the set of all perturbations $\Omega$ that are `sufficiently near' -- based on some appropriate metric -- to $\widehat{\Omega}$.
In fact, in the next proposition, the result only holds for all perturbations $\Omega$ that lies in some specific vicinity of the stationary shape $\Omega^{\ast}$ (i.e., domains whose free boundaries are contained within some certain tubular neighborhood of $\Sigma^{\ast}$).
We let $U:=U_{1}(\cdot)$ be another open bounded set containing all sets $U_{\delta}(\cdot)$, $\delta \in (0,1)$.
\begin{proposition}[sufficient second-order optimality condition]
	Let the necessary condition
	\begin{equation}
	\label{eq:assumption1}\tag{A1}
		{\operatorname{\mathnormal{d}}}J(\Omega^\ast)[\VV] = 0, \quad \text{for all $\VV \in \sfTheta^{2} \cap \vect{\mathcal{C}}^{\circ}(\overline{U})$},
	\end{equation}
	holds for a certain critical shape $\Omega^{\ast} \in \mathcal{C}^{\circ}$. 
	For all admissible perturbation $\Omega = \Omega(\phi) \in U_{\delta}(\phi^{\ast})$ of $\Omega^{\ast}$, we suppose that there is a constant $c_{b} > 0$ which depends continuously on $\Omega$ via the parametric function $\phi$ such that the bilinear form imposed by the shape Hessian satisfies the following inequality 
	\begin{equation}
	\label{eq:assumption2}\tag{A2}
			|{\operatorname{\mathnormal{d}}}^2J(\Omega)[\VV,\WW]| \leqslant c_{b}(\Omega) \|\VV\|_{\vect{H}^{1}(\Sigma)} \|\WW\|_{\vect{H}^{1}(\Sigma)}, 
	\end{equation}
	if $\Omega \in \overline{U_{\delta}(\phi^{\ast})}$,
	and the remainder estimate
	\begin{equation}
	\label{eq:assumption3}\tag{A3}
			|{\operatorname{\mathnormal{d}}}^2J(\Omega)[\VV,\WW] - {\operatorname{\mathnormal{d}}}^2J(\Omega^{\ast})[\VV,\WW]|
			\leqslant \eta\left(\|\phi - \phi^{\ast}\|_{\mathcal{C}^{\circ}(\mathbb{R};\mathbb{R}^{2})}\right) \|\VV\|_{\vect{H}^{1}(\Sigma)} \|\WW\|_{\vect{H}^{1}(\Sigma)}, 
	\end{equation}	
	for all $\VV, \WW \in \sfTheta^{2} \cap \vect{\mathcal{C}}^{\circ}(\overline{U})$,
	where $\eta:\mathbb{R}^{+}_{0} \to \mathbb{R}^{+}_{0}$ is a decreasing function that satisfies the condition $\eta(s) \to 0$ as $s \to 0$.
	Then, the domain $\Omega^{\ast}$ is a strong regular local optimum of second-order with respect to a specific constant $\hat{c}_{e} > 0$,
	\begin{equation}\label{eq:strong_regular_local_optimum_of_second_order}
		J(\Omega) - J(\Omega^{\ast}) \leqslant \hat{c}_{e} \|\phi - \phi^{\ast}\|_{\vect{H}^{1}(\Sigma^{\ast})},
		\quad \text{for all $\Omega \in \overline{U_{\hat{\delta}}(\phi^{\ast})}$}.
	\end{equation}
	if and only if the shape Hessian satisfies the strong coercivity estimate
	\begin{equation}
	\label{eq:assumption4}\tag{A4}
			{\operatorname{\mathnormal{d}}}^2J(\Omega^\ast)[\VV,\VV] \geqslant c_{e} \|\VV\|^2_{\vect{H}^{1}(\Sigma^\ast)}, 
			\quad \text{for all $\VV \in \sfTheta^{2} \cap \vect{\mathcal{C}}^{\circ}(\overline{U})$},
	\end{equation}	
	for some constant $c_{e} > 0$.
\end{proposition}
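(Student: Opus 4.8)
The plan is to recognize this statement as a direct specialization of the abstract second-order sufficient optimality theorem \cite[Thm.~3.3]{EpplerHarbrechtSchneider2007}, so that the task reduces to casting the shape functional $J$ in the parametric setting of that reference and verifying that the structural hypotheses there are precisely (A1)--(A4). Concretely, I would identify each admissible domain $\Omega \in U_{\delta}(\phi^{\ast})$ with the periodic parametrization $\phi$ of its free boundary, turning $J$ into a real-valued functional $j(\phi):=J(\Omega(\phi))$ defined on a neighborhood of $\phi^{\ast}$ in $\vect{\mathcal{C}}^{\circ}(\mathbb{R};\mathbb{R}^{2})$. Under this identification the shape gradient ${\operatorname{\mathnormal{d}}}J(\Omega)[\VV]$ and the shape Hessian ${\operatorname{\mathnormal{d}}}^2J(\Omega)[\VV,\WW]$ play the roles of the first and second derivatives of $j$, while the velocity fields $\VV,\WW \in \sfTheta^{2}\cap\vect{\mathcal{C}}^{\circ}(\overline{U})$ encode the admissible increments of $\phi$.

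For the sufficiency direction (coercivity (A4) $\Rightarrow$ local optimality), I would write a second-order Taylor expansion of $j$ along the segment joining $\phi^{\ast}$ to an admissible $\phi$, with increment represented by a field $\VV \sim \phi-\phi^{\ast}$. The first-order term vanishes on account of the necessary condition (A1). The quadratic term I split as ${\operatorname{\mathnormal{d}}}^2J(\widetilde{\Omega})[\VV,\VV] = {\operatorname{\mathnormal{d}}}^2J(\Omega^{\ast})[\VV,\VV] + \big({\operatorname{\mathnormal{d}}}^2J(\widetilde{\Omega})[\VV,\VV] - {\operatorname{\mathnormal{d}}}^2J(\Omega^{\ast})[\VV,\VV]\big)$, where $\widetilde{\Omega}$ is an intermediate domain on the segment. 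The first summand is bounded below by $c_{e}\|\VV\|^2_{\vect{H}^{1}(\Sigma^{\ast})}$ via (A4), while the remainder estimate (A3) controls the second summand by $\eta(\|\phi-\phi^{\ast}\|_{\mathcal{C}^{\circ}})\,\|\VV\|^2_{\vect{H}^{1}(\Sigma)}$. Choosing $\hat{\delta}$ small enough that $\eta(s)<c_{e}/2$ on $[0,\hat{\delta})$ then yields a net positive quadratic lower bound, hence the claimed strong local optimality estimate with an explicit $\hat{c}_{e}$.

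For the necessity direction (local optimality $\Rightarrow$ (A4)) I would argue by contraposition: if the coercivity estimate fails, there is a sequence of admissible directions $\VV_{n}$, normalized in $\vect{H}^{1}(\Sigma^{\ast})$, along which ${\operatorname{\mathnormal{d}}}^2J(\Omega^{\ast})[\VV_{n},\VV_{n}]$ is not bounded below by a fixed positive multiple of the norm. Perturbing $\Omega^{\ast}$ in the directions $\VV_{n}$ with suitably small amplitude, and reinserting into the same Taylor expansion, contradicts the growth estimate, with the boundedness (A2) ensuring the higher-order terms cannot rescue positivity. Since this is exactly the argument packaged inside \cite[Thm.~3.3]{EpplerHarbrechtSchneider2007}, in practice I would verify hypothesis-by-hypothesis that (A2)--(A4) match its assumptions and then invoke it.

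The principal obstacle is the \emph{two-norm discrepancy} intrinsic to second-order shape calculus: admissibility and closeness of domains are measured in the strong $\mathcal{C}^{\circ}$ norm, whereas the Hessian estimates (A2)--(A4) are only bounded and coercive in the weaker $\vect{H}^{1}(\Sigma)$ norm. This mismatch is precisely why the result cannot be read off from a naive Banach-space second-order condition, and it is what the hypotheses are designed to circumvent: (A2) supplies weak-norm continuity of the Hessian on the $\mathcal{C}^{\circ}$-neighborhood, while (A3) furnishes its uniform (weak-norm) continuity in the strong-norm variable $\phi$. Verifying that our functional $J$ -- with the shape gradient of Theorem~\ref{prop:Jmaps} and the Hessian formula derived in the sequel -- genuinely satisfies these mixed-norm estimates is where the real care lies; the abstract optimality equivalence then follows directly.
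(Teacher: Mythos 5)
Your proposal coincides with the paper's treatment: the paper gives no self-contained proof of this proposition but asserts it ``on the basis of'' Theorem 3.3 of Eppler--Harbrecht--Schneider (2007), which is exactly the abstract result you reduce to, and your Taylor-expansion sketch (first-order term killed by (A1), Hessian split at an intermediate shape, remainder absorbed via (A3) by choosing $\hat{\delta}$ so that $\eta < c_{e}/2$, contrapositive argument for necessity) is precisely the argument packaged inside that citation. The only point worth flagging is that you tacitly prove the intended quadratic growth bound $J(\Omega)-J(\Omega^{\ast}) \gtrsim \|\phi-\phi^{\ast}\|^{2}_{\vect{H}^{1}(\Sigma^{\ast})}$, thereby silently correcting what appears to be a typo in the paper's displayed inequality (printed with ``$\leqslant$'' and without the square), in line with the cited theorem.
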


As remarked in many references (see, e.g., \cite{EpplerHarbrecht2006nm,EpplerHarbrecht2006,EpplerHarbrechtSchneider2007,EpplerHarbrecht2008,EpplerHarbrecht2009,EpplerHarbrecht2010,EpplerHarbrecht2012a}), it is generally impossible to realize coercivity with respect, for instance, to the space ${\mathcal{C}^{2,\alpha}}$ where the involved objects are defined and differentiation is undertaken (hence Assumption \eqref{eq:assumption4}).
In subsection \ref{subsec:instability_analysis}, we will show that the energy space of the bilinear form imposed by the shape Hessian ${\operatorname{\mathnormal{d}}}^2J(\Omega)$ is the Sobolev space $\vect{H}^{1}(\Sigma)$, and that coercivity of the shape Hessian $	{\operatorname{\mathnormal{d}}}^2J(\Omega^\ast)$ can only be achieved in the \textit{weaker} space $\vect{H}^{1/2}(\Sigma^{\ast})$.
The latter result implies that the shape problem under consideration is \textit{mildly} ill-posed.


\subsection{Shape Hessian of the cost function at the critical shape}
\label{subsec:shape_Hessian}
Our goal here is to get the structure of the shape Hessian at a critical shape (see Proposition \ref{prop:shape_Hessian}).
To do this we shall assume that $\Omega$ is of class $\mathcal{C}^{2,1}$ and take the deformation fields from the set $\sfTheta^2$.
Given this assumption, the existence of the shape derivative of the state is guaranteed.
This implies that we only need to apply the chain rule to get the desired expression.
By this approach, however, we first need to exhibit the shape derivative of the states which we give in the next lemma.
A revision of Proposition \ref{prop:shape_Hessian} with the mild $\mathcal{C}^{1,1}$ regularity assumption on the domain, however, is given in Proposition \ref{prop:shape_Hessian_weaker_assumption}.

\begin{lemma}
	\label{lem:tiihonen}
	Let $\Omega \in \mathcal{C}^{2,1}$ and $\VV \in \in \sfTheta^2$.
	Then, $u \in \HH^3(\Omega)$ is shape differentiable with respect to $\Omega$ in the direction of $\VV$, 
	and its shape derivative $u' \in \HH^{1}(\Omega)$ uniquely satisfies the boundary value problem
	\begin{equation}
	\label{eq:shape_derivative_of_the state}
		-\Delta {u}' 	=0 \ \text{in $\Omega$},\quad
		{u}' 			=0 \ \text{on $\Gamma$},\quad
		\dn{{u}'} + i {u}'	=\Upsilon(u)[\Vn] \ \text{on $\Sigma$},
	\end{equation}
	where $\Upsilon(u)[\Vn] = {\operatorname{div}}_{\Sigma} (\Vn\nabla_{\Sigma} {u}) 
						- i (\dn{{u}}+\kappa {u})\Vn 
						+ \lambda \kappa \Vn$.
\end{lemma}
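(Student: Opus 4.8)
The plan is to obtain the boundary value problem \eqref{eq:shape_derivative_of_the state} by differentiating the transported variational problem \eqref{eq:transformed_state} with respect to $t$ at $t=0$, recovering the material derivative, and then passing to the shape derivative via the relation $u' = \dotu - (\nabla u \cdot \VV)$. First I would invoke the regularity gain: since $\Omega \in \mathcal{C}^{2,1}$ and $\VV \in \sfTheta^2$, elliptic regularity applied to \eqref{eq:complexPDE} (as recorded in Remark \ref{rem:higher_regularity_of_the_adjoint}) yields $u \in \HH^3(\Omega)$, so that the traces $\dn{u}$, $\nabla_\Sigma u$, and the normal derivative of $\dotu$ all make classical sense on $\Sigma$. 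With this regularity in hand, the existence and $\mathcal{C}^1$-dependence of $t \mapsto u^t$ from Proposition \ref{prop:umaps} guarantees that $\dotu \in \HHg(\Omega)$ exists and satisfies the weak identity \eqref{eq:Lagrangian_derivative_of_the_state}.

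Next I would reconstruct the strong form solved by $\dotu$. Starting from \eqref{eq:Lagrangian_derivative_of_the_state}, I integrate by parts, using $\dot{A}_t\big|_{t=0} = A = ({\operatorname{div}}\VV)\vect{I} - \operatorname{\mathnormal{D}}\!\VV - (\operatorname{\mathnormal{D}}\!\VV)^\top$ and $\dot{B}_t\big|_{t=0} = {\operatorname{div}}_\Sigma \VV$, to transfer derivatives off the test function $v$. Because $-\Delta u = 0$ in $\Omega$, the volume term on the right collapses, while the surface terms reorganize, with the help of the tangential Green's formula \eqref{eq:tangential_Greens_formula} and the splitting $\nabla \overline{p}\cdot \nabla u = \nabla_\Sigma \overline{p}\cdot \nabla_\Sigma u + \dn{\overline{p}}\,\dn{u}$ used in Lemma \ref{eq:integral_equivalence}, into a clean Robin-type datum on $\Sigma$. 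This shows $\dotu$ solves $-\Delta \dotu = 0$ in $\Omega$, $\dotu = 0$ on $\Gamma$ (since $u^t = 1$ on the fixed boundary $\Gamma$ for all $t$, the material derivative vanishes there), and a boundary condition on $\Sigma$ whose datum is expressed through $\VV$, $u$, and the curvature $\kappa$.

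Then I would convert to the shape derivative. Substituting $\dotu = u' + \nabla u \cdot \VV$ into the equations for $\dotu$ and using $-\Delta(\nabla u \cdot \VV)$-type identities together with the harmonicity of $u$, the interior equation becomes $-\Delta u' = 0$; the Dirichlet condition $u' = 0$ on $\Gamma$ follows because $\VV = 0$ on $\Gamma$. The crux is the Robin condition on $\Sigma$: I would write the Robin trace $\dn{u'} + i u'$ in terms of $\dn{\dotu} + i\dotu$ minus the contribution of $\nabla u \cdot \VV$, and then simplify the latter using $\VV|_\Sigma = \Vn \nn$, the decomposition of $\nabla u$ into tangential and normal parts, the surface identity $\dn{(\dn{u})} = -\Delta u + \Delta_\Sigma u + \kappa \dn{u}$ and the boundary conditions $\dn{u} = -iu + \lambda$ satisfied by $u$. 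Collecting terms should produce exactly $\Upsilon(u)[\Vn] = {\operatorname{div}}_\Sigma(\Vn \nabla_\Sigma u) - i(\dn{u} + \kappa u)\Vn + \lambda \kappa \Vn$.

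The main obstacle will be the careful tangential-calculus bookkeeping on $\Sigma$ that produces the divergence-form term ${\operatorname{div}}_\Sigma(\Vn \nabla_\Sigma u)$ and the curvature factors; this requires consistently applying the tangential Green's formula and the normal-derivative identities for second-order traces of $u$, which is precisely where the $\HH^3$ regularity is indispensable. Uniqueness of $u'$ in $\HH^1(\Omega)$ is then immediate from the well-posedness of the complex Robin problem (the Lax--Milgram argument underlying \eqref{eq:state_weak_form}), since the homogeneous problem admits only the trivial solution.
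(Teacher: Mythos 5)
Your plan breaks at its hinge: the claim that the material derivative is harmonic. It is not. Testing the weak identity \eqref{eq:Lagrangian_derivative_of_the_state} against $v$ compactly supported in $\Omega$ gives $-\Delta\dotu = \operatorname{div}(A\nabla u)$ in the distributional sense, and $\operatorname{div}(A\nabla u)$ does not vanish for harmonic $u$: taking $u = x_1$ and $\VV = (x_1^2,0)^\top$ (ignoring the boundary constraints on $\VV$, which are irrelevant to the pointwise identity) one computes $A\nabla u = (-2x_1,0)^\top$ and $\operatorname{div}(A\nabla u) = -2 \neq 0$. What is true -- and what your substitution step tacitly requires -- is the identity $\operatorname{div}(A\nabla u) = -\Delta(\VV\cdot\nabla u)$ valid when $\Delta u = 0$, which is precisely why $u' = \dotu - \VV\cdot\nabla u$, and not $\dotu$, is harmonic; as written, your two assertions contradict each other, since $-\Delta\dotu = 0$ would force $-\Delta u' = \Delta(\VV\cdot\nabla u) \neq 0$. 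Relatedly, there is no ``clean Robin-type datum'' for $\dotu$ on $\Sigma$: integrating $\intO{A\nabla u\cdot\nabla\cv}$ by parts produces the boundary term $(A\nabla u\cdot\nn)\cv$, which mixes $\operatorname{\mathnormal{D}}\!\VV$ with the full gradient of $u$, and this is exactly why the paper never formulates a strong problem for $\dotu$. Instead, the paper's Step 3 restricts to test functions with $\dn{\cv} = 0$ on $\Sigma$ and invokes the expansion \eqref{eq:expansion}, so that $\intO{A\nabla u\cdot\nabla\cv} = \intO{(\Delta\cv)(\VV\cdot\nabla u)} = -\intO{\nabla(\VV\cdot\nabla u)\cdot\nabla\cv}$ for such compactly supported $v$, and the combination $\dotu - \VV\cdot\nabla u$ emerges directly, giving $\Delta u' = 0$; the Robin condition then follows by Green's theorem plus the tangential Green's formula \eqref{eq:tangential_Greens_formula}, using $\dn{u} + iu = \lambda$ on $\Sigma$, with no second normal traces of $u$ ever needed.

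Two further points. First, your surface identity carries the wrong signs: the correct trace identity is $\dnn{u} = \Delta u - \Delta_\Sigma u - \kappa\,\dn{u}$ on $\Sigma$, the negative of what you wrote, so even a repaired computation along your pointwise route would flip the curvature terms in $\Upsilon(u)[\Vn]$. Second, your reliance on Proposition \ref{prop:umaps} for existence of $\dotu$ and its weak equation is legitimate, but note that the paper's appendix proof instead re-derives this by direct estimates -- boundedness of the quotients $(u^t-u)/t$ in $\HHg(\Omega)$, weak convergence, then strong convergence obtained by testing with the quotient itself -- before the Step-3 argument sketched above; your Lax--Milgram uniqueness argument is fine. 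The plan is repairable: either adopt the paper's test-function device, or correct the strong form to $-\Delta\dotu = \operatorname{div}(A\nabla u)$ and use $\operatorname{div}(A\nabla u) = -\Delta(\VV\cdot\nabla u)$ together with the sign-corrected second-normal-derivative identity, but as stated the middle of your argument fails.
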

Because the above results is new and is in fact not yet available in the literature, we provide the proof of the lemma in Appendix \ref{subsec:shape_derivative_of_the_state}. 
Now with the lemma at hand, the first of the two main results of this subsection is now in order.
\begin{proposition}
	\label{prop:shape_Hessian}
	Let $\Omega \in \mathcal{C}^{2,1}$ and $\VV, \WW \in \sfTheta^2$. Then, the shape Hessian of $J$ at the solution $\Omega^\ast$ of the Bernoulli problem \eqref{eq:Bernoulli_problem} has the following structure:
	\begin{equation}\label{eq:shape_Hessian_at_stationary_domain}
		{\operatorname{\mathnormal{d}}}^2J(\Omega^\ast)[\VV,\WW] = - \intSast{ \lambda (p_{1,W}' + \kappa p_{2,W}') {\Vn}  },
	\end{equation}
where $p'_{W} = p_{1,W}' + i p_{2,W}'$ satisfies the complex PDE system 	
\begin{equation}
	\label{eq:shape_derivative_of_the_adjoint state_at_critical_shape}
		-\Delta {p}'_{W} 	=u'_{2,W} \ \text{in $\Omega^\ast$},\qquad
		{p}'_{W} 			=0 \ \text{on $\Gamma$},\qquad
		\dn{{p}'_{W}} - i {p}'_{W}	=0 \ \text{on $\Sigma^\ast$}.
\end{equation}
\end{proposition}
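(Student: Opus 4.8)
The plan is to differentiate the first-order shape gradient ${\operatorname{\mathnormal{d}}}J(\Omega)[\VV] = \intS{\GG\, \Vn}$ once more, in a second direction $\WW$, and to exploit the drastic simplifications that occur at the Bernoulli solution $\Omega^\ast$. First I would record what the state and adjoint reduce to on $\Omega^\ast$. By Remark \ref{rem:equivalence} and Corollary \ref{cor:necessary_condition}, the condition $\ui \equiv 0$ in $\Omega^\ast$ forces the source in the adjoint system \eqref{eq:adjoint_system} to vanish, so $p = \vr + i\vi \equiv 0$ in $\Omega^\ast$; moreover the real and imaginary parts of the complex boundary condition $\dn{u} + iu = \lambda$ on $\Sigma^\ast$, taken at $\ui=0$, read $\dn{\ur} = \lambda$ and $\ur = 0$. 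In particular $\ur$, $\ui$, $\vr$, $\vi$ and all their tangential gradients vanish on $\Sigma^\ast$, the only surviving boundary datum being $\dn{\ur} = \lambda$. This immediately gives $\GG \equiv 0$ on $\Sigma^\ast$, consistent with Corollary \ref{cor:necessary_condition}.

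The key structural observation is that $\GG$ vanishes identically on $\Sigma^\ast$, so both $\GG$ and its tangential gradient $\nabla_{\Sigma}\GG$ are zero there. Writing ${\operatorname{\mathnormal{d}}}J(\Omega_s(\WW))[\VV] = \int_{\Sigma_s}\GG_s(\nns\cdot\VV)\,d\sigma_s$ and applying the velocity-method differentiation formula for boundary integrals, every term carrying the undifferentiated factor $\GG$ — namely those from the variation of the surface measure, of the unit normal, and the transport terms — drops out. Consequently
\begin{equation*}
	{\operatorname{\mathnormal{d}}}^2J(\Omega^\ast)[\VV,\WW] = \intSast{\GG'[\WW]\,\Vn},
\end{equation*}
where $\GG'[\WW]$ is the shape derivative of $\GG$ in the direction $\WW$, which I would compute by the chain rule from \eqref{eq:shape_gradient} using the state shape derivatives furnished by Lemma \ref{lem:tiihonen}.

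Next I would evaluate $\GG'[\WW]$ on $\Sigma^\ast$ by differentiating \eqref{eq:shape_gradient} term by term and discarding everything multiplied by a quantity vanishing on $\Sigma^\ast$. Since $\ui$, $\nabla_{\Sigma}\ui$, $\nabla_{\Sigma}\ur$ (because $\ur=0$ along $\Sigma^\ast$), $\vr$, $\vi$, $\nabla_{\Sigma}\vr$ and $\nabla_{\Sigma}\vi$ all vanish, the only surviving contributions come from the two terms in which the adjoint appears undifferentiated and is paired with surviving state data: from $\vr(\dn{\ur}+\kappa\ur)$ the product rule retains $\vr'(\dn{\ur}+\kappa\ur) = \lambda\, p_{1,W}'$, and from $\lambda\kappa\vi$ it retains $\lambda\kappa\, p_{2,W}'$. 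The term $\tfrac12|\ui|^2$ and all tangential-gradient products differentiate to zero at $\Omega^\ast$. This yields $\GG'[\WW] = -\lambda\big(p_{1,W}' + \kappa\, p_{2,W}'\big)$, and substituting into the displayed identity gives exactly \eqref{eq:shape_Hessian_at_stationary_domain}.

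Finally I would identify the system solved by $p'_W = p_{1,W}' + i p_{2,W}'$ by shape-differentiating \eqref{eq:adjoint_system}. Differentiating $-\Delta p = \ui$ in the interior gives $-\Delta p'_W = u'_{2,W}$ in $\Omega^\ast$, while the fixed Dirichlet datum on $\Gamma$ (where $\WW = 0$) gives $p'_W = 0$ on $\Gamma$. For the Robin condition $\dn{p} - ip = 0$ on $\Sigma$, its shape derivative produces boundary source terms built from $p$, $\dn{p}$ and $\nabla_{\Sigma}p$ — entirely analogous to $\Upsilon(u)[\Vn]$ in Lemma \ref{lem:tiihonen} — but all of these vanish on $\Sigma^\ast$ since $p\equiv 0$ there and $\dn{p} = ip = 0$, leaving the homogeneous condition $\dn{p'_W} - i p'_W = 0$, which is \eqref{eq:shape_derivative_of_the_adjoint state_at_critical_shape}. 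I expect the main obstacle to be the bookkeeping in the two differentiation steps: rigorously justifying that the boundary-integral formula leaves only the $\GG'[\WW]$ term (so that no hidden contribution of $\GG$ through a normal-direction term appears, $\GG$ being intrinsically a boundary quantity), and carefully checking that every discarded term in $\GG'[\WW]$ indeed carries a factor vanishing on $\Sigma^\ast$, which requires the full list of boundary identities ($\ur = 0$, $\ui = 0$, $\dn{\ur} = \lambda$, $p\equiv 0$) together with the shape-derivative boundary conditions of Lemma \ref{lem:tiihonen}.
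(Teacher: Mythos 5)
Your proposal is correct, but it takes a genuinely different route from the paper's. You differentiate the Hadamard boundary form ${\operatorname{\mathnormal{d}}}J(\Omega)[\VV]=\intS{\GG \Vn}$ a second time and exploit $\GG\equiv 0$ on $\Sigma^\ast$, whereas the paper never touches the boundary kernel at second order: it rewrites ${\operatorname{\mathnormal{d}}}J$ in volume form via Stokes' theorem, applies the domain differentiation formula \eqref{eq:Hadamard_domain_formula} twice, and observes that at $\Omega^\ast$ every term carrying the factor $\ui\equiv 0$ drops out, leaving ${\operatorname{\mathnormal{d}}}^2J(\Omega^\ast)[\VV,\WW]=\intOast{u'_{2,V}u'_{2,W}}$; it then introduces $p'_{W}$ merely as the solution of the well-posed auxiliary system \eqref{eq:shape_derivative_of_the_adjoint state_at_critical_shape}, integrates by parts against $u'_{V}$ (whose boundary condition at $\Omega^\ast$, from Lemma \ref{lem:tiihonen}, reads $\dn{u'_{V}}+iu'_{V}=\lambda(\kappa-i)\Vn$), and compares imaginary parts in the resulting identity $\intOast{u'_{2,W}u'_{V}}=\intSast{\lambda(\kappa-i)\overline{p}'_{W}\Vn}$. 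The paper's detour buys exactly the two things your route must pay for: (i) it avoids all second-order boundary calculus --- no $\kappa'$, no $\nn'$, and in particular no $\dn{\GG}$ term, which is precisely the ``hidden normal contribution'' you flag; and (ii) it never needs $p'_{W}$ to actually be the shape derivative of $p$, whereas your product-rule computation of $\GG'[\WW]$ does (this requires an adjoint analogue of Lemma \ref{lem:tiihonen}, which the paper asserts but does not supply, though it is routine under the $\mathcal{C}^{2,1}$ hypothesis). Both of your flagged worries are resolvable, and for the same underlying reason: $\ui\equiv 0$ and $p\equiv 0$ hold in \emph{all} of $\Omega^\ast$, not merely on $\Sigma^\ast$, so $\nabla \ui$ and $\nabla p$ vanish identically; hence the corrections $\dot{p}_{i,W}-p'_{i,W}=\nabla p_{i}\cdot\WW$ vanish on $\Sigma^\ast$, and $\dn{\GG}=0$ there for any natural extension of the kernel, since every term of $\GG$ is a product in which each factor built from $p$ or $\ui$ vanishes together with its full gradient. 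With these points made explicit, your term-by-term evaluation $\GG'[\WW]=-\lambda\bigl(p'_{1,W}+\kappa p'_{2,W}\bigr)$ on $\Sigma^\ast$ is exactly right (signs included), as is your derivation of the homogeneous Robin condition for $p'_{W}$. One small bonus of the paper's route worth noting: the intermediate identity ${\operatorname{\mathnormal{d}}}^2J(\Omega^\ast)[\VV,\VV]=\|u'_{2,V}\|^{2}_{L^2(\Omega^\ast)}$ is what the remark following the proposition uses to conclude positivity of the Hessian at the critical shape, and your boundary-side derivation does not produce it directly.
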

\begin{proof}
The higher regularity assumption on $\Omega$ and on the deformation fields allows us to apply formula \eqref{eq:Hadamard_domain_formula} twice.
Indeed, we may write ${\operatorname{\mathnormal{d}}}J(\Omega)[\VV]  =  \intO{\ui \uip } + \intS{ \operatorname{div}\left( \frac12  |\ui|^2 \VV \right)}$ by Stokes' theorem.
Then, by the same approach, we get 
\begin{align*}
{\operatorname{\mathnormal{d}}}^2J(\Omega)[\VV,\WW] &= \intO{\ui u''_{2,V,W} } + \intO{ u'_{2,V} u'_{2,W } } \\
	&\qquad + \intO{ \operatorname{div}\left[ \ui u'_{2,W} \VV + \operatorname{div}\left(\frac12 \ui^2\VV \right)\WW \right] }.
\end{align*}
\sloppy Clearly, at $\Omega = \Omega^\ast$, the above expression for the shape Hessian reduces to ${\operatorname{\mathnormal{d}}}^2J(\Omega^\ast)[\VV,\WW] = \intOast{ u'_{2,V} u'_{2,W } }$.
To obtain \eqref{eq:shape_Hessian_at_stationary_domain} from this equation, we utilize the adjoint problem \eqref{eq:shape_derivative_of_the_adjoint state_at_critical_shape}.
Note that the shape derivative of the state and of the adjoint variable at the stationary solution $\Omega^\ast$ of \eqref{eq:Bernoulli_problem} are given by	\begin{equation}
	\label{eq:shape_derivative_of_the state_at_optimal_solution}
		-\Delta {u}'_{V} 			=0 \ \text{in $\Omega^\ast$},\quad
		{u}'_{V} 				=0 \ \text{on $\Gamma$},\quad
		\dn{{u}'_{V}} + i {u}'_{V}	= \lambda (\kappa - i) V_{n}\ \text{on $\Sigma^\ast$},
	\end{equation}
and by system \eqref{eq:shape_derivative_of_the_adjoint state_at_critical_shape}, respectively.
Multiplying each of these systems by $\overline{p}'_{W} \in \HHg(\Omega)$ and ${u}'_{V} \in \HHg(\Omega)$, respectively, and then applying integration by parts will eventually lead us to the identity $\intOast{u'_{2,W} {u}'_{V}} = \intS{ \lambda \left( \kappa-i \right) \overline{p}'_{W} V_{n} }$.
Comparing the real and imaginary parts on both sides of this equation in the end will give us \eqref{eq:shape_Hessian_at_stationary_domain}.
\end{proof}
\begin{remark}
	Observing from problem \eqref{eq:shape_derivative_of_the state_at_optimal_solution}, we see that $u'_{2,V} \neq 0$ provided $\VV \not\equiv \vect{0}$ on $\Sigma^\ast$.
	Therefore, from the proof of Proposition \ref{prop:shape_Hessian}, we have ${\operatorname{\mathnormal{d}}}^2J(\Omega^\ast)[\VV,\VV] = \intOast{ |u'_{2,V}|^2 } = \|u'_{2,V}\|_{L^2(\Omega^\ast)}^2 > 0$ for any $\VV \not\equiv \vect{0}$ on $\Sigma^\ast$. 
	This inequality, however, does not mean that the shape optimization problem is already well-posed. 
	The matter will be discussed further in the next subsection.
\end{remark}
%
%
%
%
	We remark that the second-order shape derivative ${\operatorname{\mathnormal{d}}}^2J(\Omega)[\VV,\WW]$ can actually be expressed using the material derivatives of $u$ and $p$ instead of their shape derivatives.
	This allows for a more general formula since one does not need additional regularity for $u$ and $p$.
	Howbeit, the order two analysis was only carried out in this work in order to examine the ill-posedness of the proposed shape optimization problem (see also Remark \ref{rem:Newton_method}).
	Hence, we only need the expression for the shape Hessian at a critical shape ${\operatorname{\mathnormal{d}}}^2J(\Omega^\ast)[\VV,\WW]$, and since we already assume that $\Omega$ is of class $\mathcal{C}^{2,1}$, the second-order shape derivative ${\operatorname{\mathnormal{d}}}^2J(\Omega^\ast)[\VV,\WW]$ can be obtained with less effort and computations as the shape derivative for $u$ and $p$ are already available, and that we do not need to find a simplified form of $\intO{\ui u''_{2,V,W} }$ in terms of $\ui$ because the function already vanishes at $\Omega = \Omega^\ast$.
	Still, we stress that it is possible to obtain the second-order shape derivative ${\operatorname{\mathnormal{d}}}^2J(\Omega^\ast)[\VV,\WW]$ without using $u''_{2,V,W}$.
	This can be done by writing ${\operatorname{\mathnormal{d}}}^2J(\Omega^\ast)[\VV,\WW]$ using the material derivative $\dot{u}_2$ which then also allow one to weaken the regularity assumption on the domain.
	In this case, we only need $\Omega$ be of class $\mathcal{C}^{1,1}$.
	\sloppy Indeed, from \eqref{eq:material_derivative_of_the_cost}, we have 
	${\operatorname{\mathnormal{d}}}J(\Omega_s)[\VV]
		= \frac12 \intO{ ({\operatorname{div}}\, \VV) I_s |\ui^s|^2} + \intO{ \ui^s \dotu{}_2^s }$.
	Note here that the deformation field $\VV$ is independent of $s>0$.
	Therefore, differentiating the integral expression with respect to $s$, and then evaluating at $s=0$, we get
	\begin{align*}
		{\operatorname{\mathnormal{d}^2}}J(\Omega)[\VV,\WW]
		&= \frac12 \intO{ \left[ ({\operatorname{div}}\, \VV)^2 |\ui|^2 + 2 ({\operatorname{div}}\, \VV) \ui \dot{u}_{2,W} \right]}\\
		&\qquad	+ \intO{ \left( \dot{u}_{2,V} \dot{u}_{2,W} + \ui \ddot{u}_{2,V,W} \right)},
	\end{align*}
	for any given vector fields $\VV, \WW \in \sfTheta^1$.
	At $\overline{\Omega} = \overline{\Omega}^\ast$, $\ui \equiv 0$, and so we obtain ${\operatorname{\mathnormal{d}^2}}J(\Omega^\ast)[\VV,\WW] = \intO{ \dot{u}_{2,V} \dot{u}_{2,W} }$.
	This integral can of course be expressed in terms of the material derivative of $p$ using the same technique used in the proof of Theorem \ref{prop:Jmaps}. 
	In fact, however, we can just simply introduce an adjoint problem in order to get rid of the first-order derivative terms.
	To this end, let us consider the PDE system 
	\begin{equation}
	\label{eq:material_derivative_of_the_adjoint state_at_critical_shape}
		-\Delta q_{W} 	=\dot{u}_{2,W} \ \text{in $\Omega$},\qquad
		q_{W} 		=0 \ \text{on $\Gamma$},\qquad
		\dn{}q_{W} - i q_{W}	=0 \ \text{on $\Sigma$},
	\end{equation}
	at $\overline{\Omega} = \overline{\Omega}^\ast$ which can be shown to be well-posed, and that $q_{W} \in \HHg(\Omega)$ because of Proposition \ref{prop:umaps}.
	Here, of course, $q_{W} = q_{1,W} + i q_{2,W}$, where $q_{1,W}$ and $q_{2,W}$ denote the real and imaginary parts of $q_{W}$, respectively.
	Multiplying the above equation by $\dot{u}_{V}$, and then applying integration by parts, yields
	$\intOast{ \nabla \bar{q}_{W} \cdot \nabla \dot{u}_{V}} + i \intSast{ \bar{q}_{W} \dot{u}_{V} } = \intOast{ \dot{u}_{2,W} \dot{u}_{V} }$.
	Let us also consider equation \eqref{eq:Lagrangian_derivative_of_the_state} with ${v} = q_{W} \in \HHg(\Omega)$, $\overline{\Omega} = \overline{\Omega}^\ast$, and $\dot{u}$ replaced by $\dot{u}_{V}$.
	Then,
	$\intOast{\nabla \dot{u}_{V} \cdot \nabla \bar{q}_{W}}  + i \intSast{\dot{u}_{V} \bar{q}_{W} } 
		= -\intOast{A \nabla {\ur} \cdot \nabla \bar{q}_{W}} 
				+ \lambda \intSast{ ({\operatorname{div}}_{\Sigma} \VV) \bar{q}_{W}}$.
	Comparing this equation with the previous one, we get the equation
	$\intOast{ \dot{u}_{2,W} \dot{u}_{V} }
		= -\intOast{A \nabla {\ur} \cdot \nabla \bar{q}_{W}}  
			+ \lambda \intSast{ ({\operatorname{div}}_{\Sigma} \VV) \bar{q}_{W}}$.
	Meanwhile, it can be verified that a similar identity to \eqref{eq:domain_identity} holds for $\dot{u}_{2} \in \HHg(\Omega)$ and $\bar{q}_{W} \in \HHg(\Omega)$ at $\overline{\Omega} = \overline{\Omega}^\ast$ that is given by
	$- \intOast{A \nabla {\ur} \cdot \nabla \bar{q}_{W}}  
	= - \intSast{(\nabla \bar{q}_{W} \cdot \nabla {\ur})\Vn} 
		- i \intSast{ \bar{q}_{W} (\VV \cdot \nabla {\ur})}
		+ \lambda \intSast{ (\VV \cdot \nabla \bar{q}_{W})}$.
	Putting together the last two equations, and then appealing to formula \eqref{eq:tangential_Greens_formula}, we get
	\begin{align*}
		\intOast{ \dot{u}_{2,W} \dot{u}_{V} }
		%
		%
		%
		%
		%
		%
		&= - i \intSast{ \lambda \bar{q}_{W} \Vn } + \lambda \intSast{ \kappa \bar{q}_{W} \Vn }.
	\end{align*}	
	Finally, comparing the real and imaginary parts on both sides of the above equation, we arrive at the final expression for the shape Hessian of $J$ at a critical shape $\Omega^\ast$:
	\[
		{\operatorname{\mathnormal{d}}}^2J(\Omega^\ast)[\VV,\WW] = - \intSast{ \lambda (q_{1,W} + \kappa q_{2,W}) {\Vn}  }.
	\]
	The above integral expression is clearly identical to \eqref{eq:shape_Hessian_at_stationary_domain} in terms of structure.
	
	To close this subsection, we formally summarize the previous result into a proposition given below which is essentially a revision of Proposition \ref{prop:shape_Hessian} with weaker regularity assumptions on the domain $\Omega$ and on the deformation fields $\VV$ and $\WW$, and combined with a different adjoint problem.
	\begin{proposition}
	\label{prop:shape_Hessian_weaker_assumption}
	Let $\Omega \in \mathcal{C}^{1,1}$ and $\VV, \WW \in \sfTheta^1$. Then, the shape Hessian of $J$ at the solution $\Omega^\ast$ of the Bernoulli problem \eqref{eq:Bernoulli_problem} has the following structure:
	\begin{equation}\label{eq:shape_Hessian_at_a_critical_shape_version_2}
		{\operatorname{\mathnormal{d}}}^2J(\Omega^\ast)[\VV,\WW] = - \intSast{ \lambda (q_{1,W} + \kappa q_{2,W}) {\Vn}  },
	\end{equation}
	where $q_{W} = q_{1,W} + i q_{2,W}$ satisfies the complex PDE system 	
	\begin{equation}
		-\Delta {q}_{W} 	=\dot{u}_{2,W} \ \text{in $\Omega^\ast$},\qquad
		{q}_{W} 			=0 \ \text{on $\Gamma$},\qquad
		\dn{{q}_{W}} - i {q}_{W}	=0 \ \text{on $\Sigma^\ast$}.
\end{equation}	
	\end{proposition}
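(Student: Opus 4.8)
The plan is to derive \eqref{eq:shape_Hessian_at_a_critical_shape_version_2} entirely through material (Lagrangian) derivatives, so that only the $\mathcal{C}^{1,1}$ regularity of $\Omega$ and fields from $\sfTheta^1$ enter, in contrast to the shape-derivative route of Proposition~\ref{prop:shape_Hessian}. I would start from the first-order derivative of the cost written in the mixed form \eqref{eq:material_derivative_of_the_cost}, transported onto the domain $\Omega_s$ generated by the frozen field $\VV$, namely
\begin{equation*}
	{\operatorname{\mathnormal{d}}}J(\Omega_s)[\VV]
		= \frac12 \intO{ ({\operatorname{div}}\, \VV)\, I_s\, |\ui^s|^2} + \intO{ \ui^s\, \dotu{}_2^s }.
\end{equation*}
Since $\VV$ is independent of $s$, differentiating in $s$ and evaluating at $s=0$ expresses ${\operatorname{\mathnormal{d}}}^2J(\Omega)[\VV,\WW]$ as a sum of volume integrals whose integrands each carry $\ui$, one of the first material derivatives $\dot{u}_{2,V}$, $\dot{u}_{2,W}$, or the second material derivative $\ddot{u}_{2,V,W}$.

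The first reduction is at the critical shape. By Remark~\ref{rem:equivalence} (equivalently Corollary~\ref{cor:necessary_condition}), at $\overline{\Omega}=\overline{\Omega}^\ast$ one has $\ui\equiv 0$ in $\Omega^\ast$, so every term containing $\ui$ drops out -- in particular the one multiplying $\ddot{u}_{2,V,W}$, which is why the second material derivative never has to be characterized. What survives is the cross term
\begin{equation*}
	{\operatorname{\mathnormal{d}}}^2J(\Omega^\ast)[\VV,\WW] = \intOast{ \dot{u}_{2,V}\, \dot{u}_{2,W} },
\end{equation*}
and I would record the key observation that this real quantity equals the \emph{imaginary} part of the complex integral $\intOast{ \dot{u}_{2,W}\, \dot{u}_V }$, since $\dot{u}_V = \dot{u}_{1,V} + i\,\dot{u}_{2,V}$.

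To turn this into a boundary expression I would invoke the material-derivative adjoint $q_W\in\HHg(\Omega)$ solving \eqref{eq:material_derivative_of_the_adjoint state_at_critical_shape}; its well-posedness is the Lax--Milgram lemma and its $\HHg(\Omega)$-membership follows from Proposition~\ref{prop:umaps}. Testing that system against $\dot{u}_V$ and, separately, testing the weak Lagrangian identity \eqref{eq:Lagrangian_derivative_of_the_state} (taken at $\Omega^\ast$ with direction $\VV$; its middle term vanishes because $\ur=0$ on $\Sigma^\ast$) against $v=q_W$, one sees that the two tests share the same sesquilinear left-hand side, so subtracting eliminates it and yields
\begin{equation*}
	\intOast{ \dot{u}_{2,W}\, \dot{u}_V }
		= -\intOast{A \nabla {\ur} \cdot \nabla \bar{q}_W}
			+ \lambda \intSast{ ({\operatorname{div}}_{\Sigma} \VV)\, \bar{q}_W}.
\end{equation*}
I would then rewrite the volume term $-\intOast{A \nabla{\ur}\cdot\nabla\bar{q}_W}$ through the $(\ur,\bar{q}_W)$-analogue of the domain identity \eqref{eq:domain_identity}, obtained from the expansion \eqref{eq:expansion}; on $\Sigma^\ast$ the Bernoulli data $\ur=0$, $\dn{\ur}=\lambda$ together with the Robin trace $\dn{\bar{q}_W}=-i\bar{q}_W$ force the $\nabla\bar{q}_W\cdot\nabla\ur$ and $\bar{q}_W(\VV\cdot\nabla\ur)$ boundary contributions to cancel, after which the tangential Green's formula \eqref{eq:tangential_Greens_formula} collapses what remains into $\intSast{ \lambda(\kappa-i)\bar{q}_W\Vn }$. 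Taking imaginary parts and using $\bar{q}_W = q_{1,W} - i\,q_{2,W}$ then produces \eqref{eq:shape_Hessian_at_a_critical_shape_version_2}.

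The main obstacle I anticipate is the careful execution of the domain-identity analogue: the expansion \eqref{eq:expansion} presumes $\HH^2$ regularity of both $\ur$ and $q_W$, which under the weakened $\mathcal{C}^{1,1}$ hypothesis I would secure from elliptic regularity for $u$ and from Proposition~\ref{prop:umaps} for $q_W$, and the insertion of the boundary conditions so that precisely the curvature-weighted term survives demands attention. A convenient lever throughout is the observation above that the Hessian is the imaginary part of $\intOast{\dot{u}_{2,W}\,\dot{u}_V}$: any purely real volume contribution generated while manipulating $-\intOast{A\nabla\ur\cdot\nabla\bar{q}_W}$ (for instance the forcing term $\intOast{\dot{u}_{2,W}\,\VV\cdot\nabla\ur}$ inherited from $-\Delta\bar{q}_W=\dot{u}_{2,W}$) is immaterial and may simply be discarded, which streamlines the bookkeeping considerably.
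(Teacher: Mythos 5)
Your proposal is correct and follows essentially the same route as the paper's own derivation preceding Proposition \ref{prop:shape_Hessian_weaker_assumption}: differentiate \eqref{eq:material_derivative_of_the_cost} in $s$, reduce at $\Omega^\ast$ via $\ui \equiv 0$ to $\intOast{\dot{u}_{2,V}\,\dot{u}_{2,W}}$, eliminate the material derivatives by pairing the adjoint \eqref{eq:material_derivative_of_the_adjoint state_at_critical_shape} with the Lagrangian identity \eqref{eq:Lagrangian_derivative_of_the_state}, and collapse the boundary terms using the analogue of \eqref{eq:domain_identity} from \eqref{eq:expansion} together with the tangential Green's formula \eqref{eq:tangential_Greens_formula}. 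Your explicit bookkeeping that the Hessian is the imaginary part of $\intOast{\dot{u}_{2,W}\,\dot{u}_{V}}$ is a small refinement worth keeping, since it cleanly justifies discarding the purely real volume term $\intOast{\dot{u}_{2,W}\,\VV\cdot\nabla\ur}$ generated by $-\Delta\bar{q}_{W}=\dot{u}_{2,W}$, which the paper's displayed identity omits without comment.
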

%
%
%
%
\begin{remark}
	As was shown in the computation of expression \eqref{eq:shape_Hessian_at_a_critical_shape_version_2} issued above, the shape derivative of the state is not needed to exhibit the structure of the shape Hessian at a critical shape.
	It therefore goes without saying that the expression for the shape Hessian of the cost function, for general domains of class $\mathcal{C}^{1,1}$, can also be obtained without using the second-order shape derivative of the state by using the same technique applied to show \eqref{eq:shape_Hessian_at_a_critical_shape_version_2}.
	Here, we omit the computation of the said expression since we are only interested in the structure of ${\operatorname{\mathnormal{d}}}^2J(\Omega^\ast)$ (see Remark \ref{rem:Newton_method}).
\end{remark}

\subsection{Compactness of the Hessian at the optimal domain}
\label{subsec:instability_analysis}
Having computed the form of the shape Hessian at a stationary domain $\Omega^\ast$, we now examine the stability or instability of the presently proposed shape optimization formulation. 
Before that, we first make a few remarks regarding the structure of the shape Hessian.
%
%

It can be verified that the exact form of the shape Hessian ${\operatorname{\mathnormal{d}}}^2J(\Omega)[\VV,\WW]$ depends on the shape derivative $\kappa'$ of the mean curvature $\kappa$.
	In fact, with the first-order shape derivative of $J$ given as ${\operatorname{\mathnormal{d}}}J(\Omega)[\VV] = \intS{G\nn\cdot\VV}$, where $G$ is given by \eqref{eq:shape_gradient}, the shape Hessian can be shown to have the structure
	\begin{align*}
	{\operatorname{\mathnormal{d}}}^2J(\Omega)[\VV,\WW]
	= \intS{ \left[ {G}'_{W} \Vn + \left( \dn{G} + \kappa {G}\right) \Vn \Wn
			- {G}K + {G}(\operatorname{\mathnormal{D}}\!\VV) \Wn\right]},
	\end{align*}
	where $K = \svvs \cdot (\operatorname{\mathnormal{D}}\!_{\Sigma} \nn) \wws + \nn \cdot (\operatorname{\mathnormal{D}}\!_{\Sigma} \svv) \wws + \nn \cdot (\operatorname{\mathnormal{D}}\!_{\Sigma} \ww) \svvs$, $\svv = \VV|_{\Sigma}$, $\svv = \svv_{\Sigma} + \vn \nn:= (\svv \cdot \tau)\tau + (\svv \cdot \nn)\nn$ and $\operatorname{\mathnormal{D}}\!_{\Sigma}$ denotes the tangential differential operator called the \emph{tangential Jacobian matrix} given as $\operatorname{\mathnormal{D}}\!_{\Sigma} \svv = \operatorname{\mathnormal{D}}\!\VV|_{\Sigma} - (\operatorname{\mathnormal{D}}\!\VV\nn)\nn^\top$ (see, e.g., \cite[Eq. (5.2), p. 495]{DelfourZolesio2011}).
	Obviously, from \eqref{eq:shape_gradient}, the term ${G}'$ appearing in the shape Hessian is composed of the derivative $\nn'$ of the normal vector and and $\kappa'$ of the mean-curvature.
	The form of $\nn'_{W}$ and $\kappa'_{W}$ obtained along the deformation field $\WW \in \sfTheta^2$ are respectively given by $(\operatorname{\mathnormal{D}}\!\WW \nn \cdot \nn)\nn - (\operatorname{\mathnormal{D}}\!\WW)^\top \nn - (\operatorname{\mathnormal{D}}\!\nn)\WW$ and $\text{trace}\left\{ \operatorname{\mathnormal{D}}\!\left[ (\operatorname{\mathnormal{D}}\!\WW \nn \cdot \nn)\nn - (\operatorname{\mathnormal{D}}\!\WW)^\top\nn \right] - \operatorname{\mathnormal{D}}\!\nn \operatorname{\mathnormal{D}}\!\WW\right\} - \nabla \kappa \cdot \WW$ (see \cite{DelfourZolesio2011,SokolowskiZolesio1992}).
	Clearly, the latter expression consists of a second-order tangential derivative of the velocity field $\WW$, and this derivative actually exists due to our assumption that $\Omega$ is of class $\mathcal{C}^{2,1}$ \cite{DelfourZolesio2011,SokolowskiZolesio1992}.
	\sloppy Hence, we can actually decompose the shape Hessian into three parts: $h_1(\Omega)[\VV,\WW]$, $h_2(\Omega)[\VV,\WW]$, and $h_3(\Omega)[\VV,\WW]$, where $h_2$ is composed of the term $\nn'_{W}$ while $h_3$ consists of the expression $\kappa'_{W}$, and such that we have the estimates $|h_1(\Omega)[\VV,\WW]| \lesssim \|\VV\|_{\vect{L}^2(\Sigma)} \|\WW\|_{\vect{L}^2(\Sigma)}$, $|h_2(\Omega)[\VV,\WW]| \lesssim \|\VV\|_{\vect{H}^{1/2}(\Sigma)} \|\WW\|_{\vect{H}^{1/2}(\Sigma)}$, and $|h_3(\Omega)[\VV,\WW]| \lesssim \|\VV\|_{\vect{H}^{1}(\Sigma)} \|\WW\|_{\vect{H}^{1}(\Sigma)}$ (cf. \cite[proof of Thm. 3]{EpplerHarbrecht2012a}).
	From these, we can infer that the shape Hessian defines a continuous bilinear form ${\operatorname{\mathnormal{d}}}^2J(\Omega) : \vect{H}^{1}(\Sigma) \times \vect{H}^{1}(\Sigma) \to \mathbb{R}$;
	that is, $|{\operatorname{\mathnormal{d}}}^2J(\Omega)[\VV,\WW]| \lesssim \|\VV\|_{\vect{H}^{1}(\Sigma)} \|\WW\|_{\vect{H}^{1}(\Sigma)}$.\footnote{The notation $\vect{H}^{1}(\cdot)$ stands for the Sobolev space $\vect{H}^{1}(\cdot):=\{\vect{v}:=(v_1,v_2) : v_1, v_2 \in H^{1}(\cdot)\}$ and is equipped with the norm $\|\vect{v}\|^2_{\vect{H}^{1}(\cdot)} = \|v_1\|^2_{H^{1}(\cdot)} + \|v_2\|^2_{H^{1}(\cdot)}$. Similar definition is also given to the $\vect{H}^{1}_{\Gamma, 0}(\cdot)$-space.}
In addition, one can also obtained a remainder estimate given by $|{\operatorname{\mathnormal{d}}}^2J(\Omega)[\VV,\WW] - {\operatorname{\mathnormal{d}}}^2J(\Omega^{\ast})[\VV,\WW]|
			\leqslant \eta\left(\mathcal{M}(\Omega,\Omega^{\ast})\right) \|\VV\|_{\vect{H}^{1}(\Sigma)} \|\WW\|_{\vect{H}^{1}(\Sigma)}$,
where $\mathcal{M}(B_{1},B_{2})$ is some appropriate metric measuring the distance between the two sets $B_{1}$ and $B_{2}$ in $\mathbb{R}^{d}$ while $\eta:\mathbb{R}^{+}_{0} \to \mathbb{R}^{+}_{0}$ is a decreasing function that satisfies $\eta(s) \to 0$ as $s \to 0$ (cf. Assumption \ref{eq:assumption3}).	
%
%

In connection with the above discussion, it is natural to ask whether we also have the estimate $ {\operatorname{\mathnormal{d}}}^2J(\Omega^\ast)[\VV,\VV] \gtrsim \|\VV\|^2_{\vect{H}^{1}(\Sigma^\ast)}$.
This inequality condition actually has something to do with the \textit{stability} of a local minimizer $\Omega^\ast$ of $J$.
A result regarding \textit{sufficient} second-order conditions from \cite{Dambrine2002,DambrinePierre2000,Eppler2000b}\footnote{Independently of \cite{Dambrine2002,DambrinePierre2000}, also Eppler derived second-order sufficient optimality conditions in \cite{Eppler2000b}; see particularly Section 4 of the said paper.} in fact states that a local minimizer $\Omega^\ast$ is stable \textit{if and only if} the shape Hessian ${\operatorname{\mathnormal{d}}}^2J(\Omega^\ast)$ is strictly coercive in its corresponding energy space which is the $\vect{H}^{1}(\Sigma^\ast)$ space in the present case. 
Such strict coercivity, however, cannot be established for the shape Hessian \eqref{eq:shape_Hessian_at_stationary_domain}.
The aforesaid \textit{lack of coercivity} is known, especially in the shape optimization literature, as the \textit{two-norm discrepancy}, see \cite{Dambrine2002,Eppler2000a,EpplerHarbrechtSchneider2007} for more details.
Meanwhile, for a more recent study concerning the question of stability in the field of shape optimization -- focusing especially on the strategy using second-order shape derivatives -- we refer the readers to \cite{DambrineLamboley2019}.

To analyze the shape Hessian \eqref{eq:shape_Hessian_at_stationary_domain}, we will write it into an equivalent expression (see \cite{EpplerHarbrecht2006nm,EpplerHarbrecht2006,EpplerHarbrecht2009,EpplerHarbrecht2010,EpplerHarbrecht2012a}) and adapt the method already used in \cite{Afraites2022,AfraitesMasnaouiNachaoui2022}.
We first introduce the operators $ \mathcal{L}: \ \vect{H}^{1/2}(\Sigma^\ast) \to H^{1/2}(\Sigma^\ast)$ and $\mathcal{K}: \ H^{1/2}(\Sigma^\ast) \to H^{1/2}(\Sigma^\ast)$ (see \cite[Sec. 3.4]{RabagoAzegami2019b}), defined respectively as 
$\mathcal{L}\VV := \lambda \Vn$ and $\mathcal{K}v := \kappa v$,
which we shall utilize in our argumentation.
The continuity of the operators $\mathcal{L}$ and $\mathcal{K}$ follow from the next lemma (cf. \cite[Lem. 3.3]{EpplerHarbrecht2006nm} and see also \cite{EpplerHarbrecht2009, EpplerHarbrecht2010, EpplerHarbrecht2012a}).
\begin{lemma}
\label{lem:aux_result}
	Let $\Omega \subset \mathbb{R}^2$ be a bounded Lipschitz domain with boundary $\Gamma := \partial\Omega$.
	Then, the map $v \mapsto \phi v$ is continuous in $H^{1/2}(\Gamma)$ for any $v \in H^{1/2}(\Gamma)$ and $\phi \in \mathcal{C}^{0,1}(\Gamma)$.
\end{lemma}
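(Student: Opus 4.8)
The plan is to prove the stronger quantitative statement that multiplication by $\phi$ is a \emph{bounded} linear operator on $H^{1/2}(\Gamma)$, from which continuity follows at once since the map $v \mapsto \phi v$ is linear. To this end I would work with the intrinsic Gagliardo (Slobodeckij) characterization of the fractional norm: on the one-dimensional Lipschitz manifold $\Gamma \subset \mathbb{R}^2$ one has $\|v\|_{H^{1/2}(\Gamma)}^2 = \|v\|_{L^2(\Gamma)}^2 + |v|_{H^{1/2}(\Gamma)}^2$ with seminorm
\[
|v|_{H^{1/2}(\Gamma)}^2 = \int_\Gamma \int_\Gamma \frac{|v(x)-v(y)|^2}{|x-y|^2}\,d\sigma(x)\,d\sigma(y),
\]
this norm being equivalent to the one obtained through local charts for a Lipschitz boundary. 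The target estimate is $\|\phi v\|_{H^{1/2}(\Gamma)} \le C\,\|\phi\|_{\mathcal{C}^{0,1}(\Gamma)}\,\|v\|_{H^{1/2}(\Gamma)}$ for a constant $C$ depending only on $\Gamma$.

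The $L^2$-part is immediate: $\|\phi v\|_{L^2(\Gamma)} \le \|\phi\|_{L^\infty(\Gamma)}\|v\|_{L^2(\Gamma)}$. For the seminorm I would exploit the elementary splitting
\[
\phi(x)v(x) - \phi(y)v(y) = \phi(x)\bigl(v(x)-v(y)\bigr) + v(y)\bigl(\phi(x)-\phi(y)\bigr),
\]
together with $(a+b)^2 \le 2a^2 + 2b^2$, to dominate $|\phi(x)v(x)-\phi(y)v(y)|^2$ by the sum of $2|\phi(x)|^2|v(x)-v(y)|^2$ and $2|v(y)|^2|\phi(x)-\phi(y)|^2$. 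Inserting this into the double integral splits $|\phi v|_{H^{1/2}(\Gamma)}^2$ into two pieces. The first piece is bounded by $2\|\phi\|_{L^\infty(\Gamma)}^2\,|v|_{H^{1/2}(\Gamma)}^2$, simply by pulling out the supremum of $|\phi|$. The second piece is where the Lipschitz hypothesis enters: writing $L$ for the Lipschitz constant of $\phi$ on $\Gamma$, the bound $|\phi(x)-\phi(y)| \le L\,|x-y|$ exactly cancels the singular kernel, giving
\begin{align*}
\int_\Gamma \int_\Gamma \frac{2|v(y)|^2|\phi(x)-\phi(y)|^2}{|x-y|^2}\,d\sigma(x)\,d\sigma(y)
	&\le 2L^2 \int_\Gamma \int_\Gamma |v(y)|^2\,d\sigma(x)\,d\sigma(y) \\
	&= 2L^2|\Gamma|\,\|v\|_{L^2(\Gamma)}^2,
\end{align*}
which is finite because $\Gamma$ has finite one-dimensional measure. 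Collecting the three contributions yields the desired multiplier estimate with $C$ depending on $|\Gamma|$.

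The main obstacle is not in the estimates themselves, which are routine once the splitting is in place, but in justifying that the Gagliardo seminorm with the \emph{Euclidean} distance $|x-y|$ furnishes a norm equivalent to the intrinsic $H^{1/2}(\Gamma)$ norm on a merely Lipschitz boundary, and that membership $\phi \in \mathcal{C}^{0,1}(\Gamma)$ really delivers the pointwise Lipschitz bound $|\phi(x)-\phi(y)| \le L|x-y|$ in that same Euclidean distance. Both are standard for Lipschitz domains---the chordal and geodesic metrics on $\Gamma$ being comparable---so I would invoke the equivalence of these metrics rather than reprove it. An entirely alternative route would be interpolation: multiplication by $\phi$ is bounded on $L^2(\Gamma)$ trivially and on $H^1(\Gamma)$ by the Leibniz rule (using $\nabla_\Gamma\phi \in L^\infty$), so boundedness on $H^{1/2}(\Gamma) = [L^2(\Gamma),H^1(\Gamma)]_{1/2}$ follows by interpolation; I prefer the direct Gagliardo argument since it avoids the subtleties of defining $H^1$ and tangential gradients on a Lipschitz curve.
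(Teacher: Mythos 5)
Your proof is correct, and it takes a genuinely different route from the paper. The paper never touches the Gagliardo seminorm: it extends $\phi$ to a Lipschitz function $\tilde{\phi}$ on $\overline{\Omega}$ by McShane--Whitney, lifts $v$ into the domain via a bounded extension operator $\tilde{E}:H^{1/2}(\Gamma)\to H^{1}(\Omega)$, invokes the standard result (Grisvard, Thm.~1.4.1.1) that multiplication by a Lipschitz function is bounded on $H^{1}(\Omega)$, and then traces back, so that $v\mapsto\phi v = \operatorname{trace}(\tilde{\phi}\,\tilde{E}v)$ is a composition of bounded operators. That argument is short and delegates all analytic work to three cited theorems, but it gives no explicit constant and routes through the domain. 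Your intrinsic argument stays entirely on $\Gamma$, and the computation is sound: for a curve in $\mathbb{R}^{2}$ the $H^{1/2}$ kernel exponent is $(d-1)+2s=2$, so the Lipschitz bound $|\phi(x)-\phi(y)|\leqslant L|x-y|$ cancels the singularity exactly, and the remainder $2L^{2}|\Gamma|\,\|v\|^{2}_{L^{2}(\Gamma)}$ is finite since $\partial\Omega$ has finite length; this yields the quantitative multiplier bound $\|\phi v\|_{H^{1/2}(\Gamma)}\lesssim\|\phi\|_{\mathcal{C}^{0,1}(\Gamma)}\|v\|_{H^{1/2}(\Gamma)}$, which is stronger than what the paper records and is, incidentally, what Remark~\ref{rem:positivity} implicitly uses when it asserts continuity of $\mathcal{K}$ on $H^{s}(\Sigma^\ast)$ for all $s\in[0,1]$ (your argument interpolated against the trivial $L^{2}$ bound gives exactly that range). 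You correctly flag the one nontrivial hypothesis you import, namely the equivalence on a Lipschitz boundary of the chart-based $H^{1/2}(\Gamma)$ norm with the Gagliardo norm in the chordal metric; that equivalence is standard (it is essentially how $H^{1/2}(\Gamma)$ is set up in McLean or Grisvard), so invoking it rather than reproving it is appropriate, and it plays the same ``outsourced'' role that the extension and trace theorems play in the paper's proof. One small remark: the exact cancellation you exploit is special to $d=2$; for a surface in $\mathbb{R}^{3}$ the same splitting leaves a kernel $L^{2}|x-y|^{-1}$, which is still integrable over a two-dimensional boundary, so your method generalizes, whereas the paper's extension-multiplication-trace argument generalizes verbatim -- a minor structural advantage of their route.
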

\begin{proof}
	By McShane-Whitney extension theorem, there is some function $\tilde{\phi} \in \mathcal{C}^{0,1}(\overline{\Omega})$ such that $\tilde{\phi}|_{\Gamma} = \phi$.
	Also, from \cite[Thm. 3.37, p. 102]{McLean2000}, there is a bounded linear extension operator $\tilde{E}:H^{1/2}(\Gamma) \to H^{1}(\Omega)$.
	Hence, in view of \cite[Thm. 1.4.1.1, p. 21]{Grisvard1985} and by trace theorem, we see that the operator $\mathcal{K} v = \phi v$ is continuous in $H^{1/2}(\Gamma)$ since $\mathcal{K} v = {\operatorname{trace}}(\tilde{\phi} \tilde{E}v)$ is a composition of bounded operators.
\end{proof}
\begin{remark}
\label{rem:positivity}
Let us note that for $\mathcal{C}^{1,1}$ smooth boundary $\Sigma$, $\kappa$ is well defined almost everywhere on the boundary and actually belongs to $L^{\infty}$ on account of Rademacher's theorem (see, e.g., \cite[Thm. 2.7.1, p. 67]{SauterSchwab2011}).
Moreover, the mean curvature is continuous for boundaries of class $\mathcal{C}^{2,1}$.
Due to our smoothness assumption on the free boundary, the operator $\mathcal{K}$ is actually a continuous map from $H^s(\Sigma^\ast)$ to $H^s(\Sigma^\ast)$ for all $s \in [0,1]$, and the same is also true for the (bijective) map $\mathcal{L}$. 
\end{remark}
In addition to $\mathcal{L}$ and $\mathcal{K}$, we also introduce the operator $\mathcal{P}_i$, $i \in \{1,2\}$, by $\mathcal{P}_i: \vect{H}^{1/2}(\Sigma^\ast) \to H^{-1/2}(\Sigma^\ast)$, $\VV \mapsto p_{i,V}'$, for all $i\in\{1,2\}$.
Accordingly, we may write the shape Hessian at the optimal domain $\Omega^\ast$ as
\begin{align*}
	{\operatorname{\mathnormal{d}}}^2J(\Omega^\ast)[\VV,\VV]
		&= -\intSast{ (\lambda {\Vn}  p_{1,V}' + \kappa \lambda {\Vn}  p_{2,V}')  }\\
		&= - \langle \mathcal{L}\VV, \mathcal{P}_1\VV\rangle - \langle \mathcal{K} \mathcal{L}\VV, \mathcal{P}_2\VV\rangle,
\end{align*}
where $\langle \, \cdot\, , \, \cdot\, \rangle$ is the duality product between $H^{1/2}(\Sigma^\ast)$ and $H^{-1/2}(\Sigma^\ast)$.
%
%
%

Now, in relation to Remark \ref{rem:positivity}, we state our final result which claims the ill-posedness of the proposed CCBM formulation of \eqref{eq:Bernoulli_problem}.
\begin{proposition}\label{prop:compactness_of_shape_Hessian}
Let $\Omega^\ast$ be the stationary solution to \eqref{eq:Bernoulli_problem}, then the Riesz operator associated to ${\operatorname{\mathnormal{d}}}^2J(\Omega^\ast)[\VV,\VV]: \vect{H}^{1/2}(\Sigma^\ast) \to H^{-1/2}(\Sigma^\ast)$ is compact.
\end{proposition}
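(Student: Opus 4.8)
The plan is to identify the Riesz operator explicitly and then exhibit it as a composition that factors through a compact Sobolev embedding. Reading off the expression for the shape Hessian derived just above (and recalling that $\mathcal{L}$ is merely multiplication by the constant $\lambda$), the Riesz representation of the bilinear form $(\VV,\WW)\mapsto{\operatorname{\mathnormal{d}}}^2J(\Omega^\ast)[\VV,\WW]$ is built from the two solution maps $\mathcal{P}_1,\mathcal{P}_2$ pre- and post-composed with the bounded multiplication operators $\mathcal{L}$ and $\mathcal{K}$; at the critical shape $\Omega^\ast$ the form is symmetric, and one obtains
\[
	R\VV = -\lambda\bigl( \mathcal{P}_1\VV + \mathcal{K}\mathcal{P}_2\VV \bigr).
\]
Since compactness is stable under composition with bounded operators (and under passing to the symmetric part), it suffices to prove that each $\mathcal{P}_i\colon\vect{H}^{1/2}(\Sigma^\ast)\to H^{-1/2}(\Sigma^\ast)$ is compact. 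The decisive observation is that the a priori mapping property $\mathcal{P}_i\colon\vect{H}^{1/2}(\Sigma^\ast)\to H^{-1/2}(\Sigma^\ast)$ is far from sharp: the two elliptic problems defining $\mathcal{P}_i$ each \emph{gain two derivatives}, so that the traces $p'_{i,V}|_{\Sigma^\ast}$ land in $H^{3/2}(\Sigma^\ast)$.

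To make this precise I would run the two Robin problems in sequence. Let $\VV\in\vect{H}^{1/2}(\Sigma^\ast)$, so $\Vn\in H^{1/2}(\Sigma^\ast)$. By Lemma \ref{lem:aux_result}, multiplication by $\kappa$ preserves $H^{1/2}(\Sigma^\ast)$, so the Robin datum $\lambda(\kappa-i)\Vn$ of problem \eqref{eq:shape_derivative_of_the state_at_optimal_solution} lies in $H^{1/2}(\Sigma^\ast)$. Because $\Omega^\ast$ is annular with the two boundary components $\Gamma$ and $\Sigma^\ast$ \emph{disjoint} closed curves, no mixed-boundary-condition corner singularity arises, and the standard shift estimate for the pure Dirichlet/Robin problem on a $\mathcal{C}^{1,1}$ domain yields $u'_{V}\in\HH^2(\Omega^\ast)$; in particular $u'_{2,V}\in L^2(\Omega^\ast)$ with norm controlled by $\|\Vn\|_{H^{1/2}(\Sigma^\ast)}$. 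Feeding this into the adjoint problem \eqref{eq:shape_derivative_of_the_adjoint state_at_critical_shape}, which has the $L^2$ source $u'_{2,V}$ and homogeneous boundary data, the same regularity theory gives $p'_{V}\in\HH^2(\Omega^\ast)$. The trace theorem then furnishes $\mathcal{P}_i\VV=p'_{i,V}|_{\Sigma^\ast}\in H^{3/2}(\Sigma^\ast)$ with $\|\mathcal{P}_i\VV\|_{H^{3/2}(\Sigma^\ast)}\lesssim\|\VV\|_{\vect{H}^{1/2}(\Sigma^\ast)}$; that is, $\mathcal{P}_i$ is bounded from $\vect{H}^{1/2}(\Sigma^\ast)$ into $H^{3/2}(\Sigma^\ast)$.

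The conclusion is then immediate from the Rellich--Kondrachov theorem: on the compact curve $\Sigma^\ast$ the embedding $H^{3/2}(\Sigma^\ast)\hookrightarrow H^{-1/2}(\Sigma^\ast)$ is compact, so $\mathcal{P}_1$ is compact as a map into $H^{-1/2}(\Sigma^\ast)$. For the curvature term I would route through $H^1$: since $\kappa$ is only Lipschitz (resp. continuous for $\mathcal{C}^{2,1}$ domains), multiplication by $\kappa$ is bounded on $H^1(\Sigma^\ast)$ by Remark \ref{rem:positivity}, so $\mathcal{K}\mathcal{P}_2\colon\vect{H}^{1/2}(\Sigma^\ast)\to H^{3/2}(\Sigma^\ast)\hookrightarrow H^1(\Sigma^\ast)\to H^1(\Sigma^\ast)$ is bounded, and $H^1(\Sigma^\ast)\hookrightarrow H^{-1/2}(\Sigma^\ast)$ is again compact. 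Hence $\mathcal{K}\mathcal{P}_2$ is compact too, and $R$, being a linear combination of compact operators, is compact.

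The only genuinely delicate point --- the one I would write out with care --- is the $\HH^2(\Omega^\ast)$ elliptic regularity for the two complex Robin problems. What rescues the argument is precisely the annular geometry: because $\Gamma$ and $\Sigma^\ast$ do not meet, the shift theorem applies on each smooth boundary piece without the reduced regularity that a genuine Dirichlet--Neumann interface would force. I would also be deliberate about routing $\mathcal{K}\mathcal{P}_2$ through $H^1$ rather than $H^{3/2}$, since the Lipschitz multiplier $\kappa$ need not preserve fractional spaces of order above one; landing in any space $H^s(\Sigma^\ast)$ with $s>-1/2$ is all that the compact embedding into $H^{-1/2}(\Sigma^\ast)$ requires.
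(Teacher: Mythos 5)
Your proof is correct, and its architecture coincides with the paper's: split the Hessian into the multipliers $\mathcal{L}$, $\mathcal{K}$ and the solution operators $\mathcal{P}_1$, $\mathcal{P}_2$, then obtain compactness of the $\mathcal{P}_i$ from an elliptic regularity gain followed by a compact embedding of boundary Sobolev spaces. Where you differ is in the regularity ladder and in where the curvature sits. The paper exploits the fact that the source $u_{2}'$ of the adjoint problem lies in $H^{1}(\Omega^\ast)$, so it factors through $H^{3}(\Omega^\ast)$, takes traces (respectively normal traces) in $H^{5/2}(\Sigma^\ast)$ and $H^{3/2}(\Sigma^\ast)$, and concludes with the compact embedding $H^{5/2}(\Sigma^\ast)\hookrightarrow H^{-1/2}(\Sigma^\ast)$; you content yourself with $u'_{2,V}\in L^{2}(\Omega^\ast)$, get $p'_{V}\in\HH^{2}(\Omega^\ast)$ and traces in $H^{3/2}(\Sigma^\ast)$, and use $H^{3/2}(\Sigma^\ast)\hookrightarrow H^{-1/2}(\Sigma^\ast)$ -- a shorter ladder that demands one derivative less from the second elliptic solve, since any positive gap suffices for Rellich--Kondrachov. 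The second difference is that the paper keeps $\kappa$ on the left of the duality pairing, writing $-\langle\mathcal{K}\mathcal{L}\VV,\mathcal{P}_2\VV\rangle$ with $\mathcal{K}$ acting on $\mathcal{L}\VV\in H^{1/2}(\Sigma^\ast)$, which Lemma \ref{lem:aux_result} covers directly; by assembling the Riesz operator as $R\VV=-\lambda(\mathcal{P}_1\VV+\mathcal{K}\mathcal{P}_2\VV)$ you instead make $\kappa$ act on the output of $\mathcal{P}_2$, and your insistence on routing this through $H^{1}(\Sigma^\ast)$ (consistent with Remark \ref{rem:positivity}) rather than $H^{3/2}(\Sigma^\ast)$ is exactly right, since a Lipschitz multiplier need not preserve fractional orders above one. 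Two further points in your write-up deserve the care you give them: the $\HH^{2}$ shift estimate does rely on $\Gamma$ and $\Sigma^\ast$ being disjoint, as you note, and it also relies tacitly on being \emph{at the critical shape}, where the troublesome term $\operatorname{div}_{\Sigma}(\Vn\nabla_{\Sigma}u)$ in $\Upsilon(u)[\Vn]$ vanishes so that the Robin datum in \eqref{eq:shape_derivative_of_the state_at_optimal_solution} genuinely lies in $H^{1/2}(\Sigma^\ast)$ -- for a general domain it would only be $H^{-1/2}(\Sigma^\ast)$, and the argument would not start. Under the paper's standing $\mathcal{C}^{2,1}$ assumption (which makes $\kappa$ Lipschitz, not merely continuous -- a small slip in your parenthetical), everything you use is available, and your version is, if anything, slightly more economical than the paper's.
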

\begin{proof}
The idea of the proof is to express the shape Hessian as a composition of linear continuous operators and a compact one (the compactness being obtained using the compactness of the imbedding between two Sobolev spaces).
As shown above, the operators $\mathcal{L}$ and $\mathcal{K}$ are continuous, but the operators $\mathcal{P}_1$ and $\mathcal{P}_2$ are compact.
We verify this claim by decomposing these operators as composition of continuous and compact ones.
For the product $-\langle \mathcal{L}\VV, \mathcal{P}_1\VV\rangle$, we decompose the map $\mathcal{P}_1$ by first expressing it as the composition $\mathcal{P}_1 = \mathcal{Q}_2 \circ \mathcal{Q}_1$
where $\mathcal{Q}_{1}: \vect{H}^{1/2}(\Sigma^\ast) \to H^{1}(\Omega^\ast)$, $\VV \mapsto \uip$, and $\mathcal{Q}_{2}: H^{1}(\Omega^\ast) \to H^{-1/2}(\Sigma^\ast)$, $\phi \mapsto {w}$.
Here, $\uip$ solves \eqref{eq:shape_derivative_of_the state} and ${w} = {w}_1 + i {w}_2$ satisfies
\begin{equation}\label{eq:Phi_equation}
		-\Delta {w}		=\phi \ \text{in $\Omega^\ast$},\qquad
		{w} 			=0 \ \text{on $\Gamma$},\qquad
		\dn{{w}} - i {w}	=0 \ \text{on $\Sigma^\ast$}.
\end{equation}
Clearly, $\mathcal{Q}_1$ is continuous.
Next, we further write $\mathcal{Q}_{2}$ as  $\mathcal{Q}_{2} := \mathcal{R}_{3} \circ \mathcal{R}_{2} \circ \mathcal{R}_{1}$ where 
\begin{align*}
	\mathcal{R}_{1}: H^{1}(\Omega^\ast) \to H^3(\Omega^\ast), \quad \phi \mapsto {w}_1,&\qquad
	\mathcal{R}_{2}: H^3(\Omega^\ast) \to H^{5/2}(\Sigma^\ast), \quad {w}_1 \mapsto {w}_1,\\
	\qquad\text{and}\qquad\mathcal{R}_{3}: H^{5/2}(\Sigma^\ast) &\to H^{-1/2}(\Sigma^\ast), \quad {w}_1 \mapsto {w}_1.
\end{align*}
The operators $\mathcal{R}_{1}$ and $\mathcal{R}_{2}$ are continuous while $\mathcal{R}_{3}$ is the compact embedding of $H^{5/2}(\Sigma^\ast)$ into $H^{-1/2}(\Sigma^\ast)$.\footnote{An embedding result for fractional Sobolev spaces $H^s(\Omega)$ under bounded $\mathcal{C}^{k,\alpha}$-domains, $k = 0, 1, \ldots$, $\alpha \in [0,1]$, can be found in \cite[Thm. 7.9, p. 119]{Wloka1987}. See also \cite[Thm. 2.5.5, p. 61]{SauterSchwab2011}, but for Lipschitz and $\mathcal{C}^k$ domains.}

For the product $- \langle \mathcal{K} \mathcal{L}\VV, \mathcal{P}_2\VV\rangle$, a similar decomposition is applied, but with an additional decomposition to get the compactness result.
Indeed, let us decompose the map $\mathcal{P}_2$ as the composition $\mathcal{P}_2 = \mathcal{Q}_2 \circ \mathcal{Q}_1$
where $\mathcal{Q}_{1}: \vect{H}^{1/2}(\Sigma^\ast) \to H^{1}(\Omega^\ast)$, $\VV \mapsto \uip$, and $\mathcal{Q}_{2}: H^{1}(\Omega^\ast) \to H^{-1/2}(\Sigma^\ast)$, $\phi \mapsto {w}$.
Here, again, $\uip$ and ${w}$ respectively solves \eqref{eq:shape_derivative_of_the state} and \eqref{eq:Phi_equation}.
Next, we write $\mathcal{Q}_{2}$ as $\mathcal{Q}_{2} := \mathcal{S}_{4} \circ \mathcal{S}_{3} \circ \mathcal{S}_{2} \circ \mathcal{S}_{1}$ where 
\begin{align*}
	\mathcal{S}_{1}: H^{1}(\Omega^\ast) \to H^3(\Omega^\ast), \ \ \phi \mapsto {w}_1;\qquad
	\mathcal{S}_{2}: H^3(\Omega^\ast) \to H^{3/2}(\Sigma^\ast), \ \ {w}_1 \mapsto \dn{{w}_1};\\
	\mathcal{S}_{3}: H^{3/2}(\Sigma^\ast) \to H^{5/2}(\Sigma^\ast), \ \ \dn{{w}_1} \mapsto {w}_2;\qquad
	\mathcal{S}_{4}: H^{5/2}(\Sigma^\ast) \to H^{-1/2}(\Sigma^\ast), \ \ {w}_2 \mapsto {w}_2.
\end{align*}
The first three operators $\mathcal{S}_{1}$, $\mathcal{S}_{2}$, and $\mathcal{S}_{3}$ are continuous while $\mathcal{S}_{4}$ is the compact embedding of $H^{5/2}(\Sigma^\ast)$ into $H^{-1/2}(\Sigma^\ast)$.
This proves the compactness result.
\end{proof}
%
%
%
Related results, specifically given for $L^2$-tracking type and compact gradient tracking functionals, are issued in \cite[Prop. 3.1]{EpplerHarbrecht2006nm} and \cite[Prop. 2.10]{EpplerHarbrecht2008}.
It is worth to remark that the compactness of the shape Hessian stated in Proposition \ref{prop:compactness_of_shape_Hessian} opposes the strict coercivity ${\operatorname{\mathnormal{d}}}^2J(\Omega^\ast)[\VV,\VV] \gtrsim \|\VV\|^2_{\vect{H}^{1}(\Sigma^\ast)}$ that corresponds to the well-posedness of the proposed shape optimization formulation of the free boundary problem in consideration (on a related note, see \cite[Rem. 3.2--3.3]{EpplerHarbrecht2006nm} and \cite[Rem. 2.11]{EpplerHarbrecht2008}).  

\section{Numerical approximation}
\label{sec:Numerical_Approximation}
The numerical resolution to our proposed shape optimization approach to \eqref{eq:Bernoulli_problem} is carried out using a Sobolev gradient-based method.
The implementation is realized in line with the author's previous work using the finite element method, see \cite{RabagoAzegami2019a,RabagoAzegami2019b,RabagoAzegami2020}, but with some changes which are crucial for assessing the numerical performance of the new method over the classical Kohn-Vogelius cost functional approach.
The proposed shape optimization reformulation can of course be solved numerically using other methods such as the \textit{level-set method} (see \cite{OsherSethian1998}) -- an Eulerian-like type numerical scheme -- employed, for instance, in \cite{BenAbdaetal2013,HIKKP2009,IKP2006}, or via a \textit{boundary element method} through the concept of boundary integral equations used in \cite{EpplerHarbrecht2006,EpplerHarbrecht2009,EpplerHarbrecht2010,EpplerHarbrecht2012a,Harbrecht2008}.

\subsection{Numerical algorithm}
\label{subsec:Numerical_Algorithm}
For completeness, we give below the important details of our algorithm.

\textit{Choice of descent direction.} 
The choice $\VV = V_{n}\nn = -G\nn$, $G \in L^2(\Sigma)$, $G \not\equiv 0$, provides a descent direction for the cost function $J(\Omega)$ given in \eqref{eq:cost_function}.
Indeed, in general, the inequality condition $J(\Omega_t) = J(\Omega) + t \left. \frac{\operatorname{\mathnormal{d}}}{\operatorname{\mathnormal{d}}\varepsilon} J(\Omega_{\varepsilon})\right|_{\varepsilon = 0} + O(t^2)
		= J(\Omega) + t \intS{GV_{n}} + O(t^2)
		= J(\Omega) - t \intS{|G|^2} + O(t^2)
		< J(\Omega)$,
holds for sufficiently small real number $t>0$.	
However, as alluded above, we make use of the Riesz representation of the shape gradient.
More exactly, we apply an extension-regularization technique by taking the descent direction $\VV$ as the solution in $\vect{H}^{1}_{\Gamma, \vect{0}}(\Omega)$ to the variational problem $a(\VV,\vect{\varphi})= - \intS{\GG\nn \cdot \vect{\varphi}}$, for all $\vect{\varphi} \in \vect{H}^{1}_{\Gamma, \vect{0}}(\Omega)$, where $a$ is the ${\vect{H}^{1}(\Omega)}(:=H^{1}(\Omega)^{d}$)-inner product in $d$-dimension, $d \in \{2, 3\}$ (cf. eq. \eqref{eq:extended_regularized} in next subsection).
In this sense, the \textit{Sobolev gradient} $\VV$ \cite{Neuberger1997} becomes a smoothed preconditioned extension of $-\GG\nn$ over the entire domain $\Omega$.
For more discussion about discrete gradient flows for shape optimization, we refer the readers to \cite{Doganetal2007}.

\textit{The main algorithm.}
The main steps of the iterative algorithm, computing the $k$th domain $\Omega^{k}$, is summarized as follows:
\begin{description}
\setlength{\itemsep}{0.5pt}
	\item[1. \it{Initilization}] Choose an initial shape $\Omega^{0}$.  
	\item[2. \it{Iteration}] For $k = 0, 1, 2, \ldots$
		\begin{enumerate}
			\item[2.1] Solve the state and adjoint state systems on the current domain $\Omega^{k}$.
			\item[2.2] Choose $t^{k}>0$, and compute the Sobolev gradient $\VV^{k}$ in $\Omega^{k}$.
			\item[2.3] Update the current domain by setting $\Omega^{k+1} = (\operatorname{id}+t^{k}\VV^{k})\Omega^{k}$. 
		\end{enumerate}
	\item[3. \it{Stop Test}] Repeat the \textit{Iteration} until convergence.
\end{description}

\begin{remark}[Step-size computation]\label{rem:step_size}
The step size $t^{k}$ is computed via a backtracking line search procedure using the formula $t^{k} = \mu J(\Omega^{k})/|\VV^{k}|^2_{\mathbf{H}^{1}(\Omega^{k})}$ at each iteration, where $\mu > 0$ is a given real number.
This choice of the step size is based on an Armijo-Goldstein-like condition for the shape optimization method, see, for example, \cite[p. 281]{RabagoAzegami2020}.
In our application, however, the step size parameter $\mu$ is not limited to the interval $(0,1)$ in contrast to \cite{RabagoAzegami2019a,RabagoAzegami2019b,RabagoAzegami2020}.
The less restrictive choice of the value for $\mu$ allows us to numerically evaluate the sensitivity of the cost functions $J(\Omega)$ and $J_{KV}(\Omega)$ through large deformations of the domain $\Omega$.   
\end{remark}

\begin{remark}[Stopping conditions] \label{rem:stopping_condition}
The algorithm is stopped as soon as $\Omega^{k}$, $\Sigma^{k}$, and $\VV^{k}$ satisfy the inequality condition 
\[
	\max\left( \sqrt{a(\VV^{k},\VV^{k})}, \|\VV^{k}\|_{\mathcal{C}(\Sigma^{k})^{d}}, J(\Omega^{k}) \right) < \texttt{Tol},
\]
for some fixed small value $\texttt{Tol} > 0$.
We also terminate the algorithm as soon as the absolute difference $|J(\Omega^{k}) - J(\Omega^{k-1}) |$ is small enough, or after a finite number of iterations.
\end{remark}
%
%
%
\begin{remark}\label{rem:Newton_method}
The convergence behavior of a gradient-based iterative scheme can be improved by incorporating the Hessian information in the numerical procedure.
The drawback, however, of a second-order method is that, typically, it demands additional computational burden and time to carry out the calculation, especially when the Hessian is complicated \cite{NovruziRoche2000,Simon1989}. 
Here, we will not employ a second-order method to numerically solve the optimization problem.
The order two analysis was performed here only to carry out the stability analysis for the proposed optimization problem.
\end{remark}
\subsection{The extension-regularization technique preserves the critical shape}
\label{subsec:properties}
In this intermediate subsection, we issue a small result concerning the stationary point $\Omega^\ast$ of the evolving boundary $\Omega(t)$, where $t \geqslant 0$ (interpreted here as a ``pseudo-time'' step), that evolves from an initial geometric profile $\Omega(0)$ under the pseudo flow field $V_{n}\nn$.
This flow field is related to the extension-regularization technique used to compute the descent field at the beginning of the previous section.
Here, there will be a slight abuse of notation.
In previous discussions $\VV$ stands for the deformation field that deforms the reference domain $\Omega$ using the application of the operator $T_t$.
In the arguments given below, we will be using the same notation to represent an extended-regularized (normal) flow field for the evolving boundary $\Sigma(t)$ which, in some sense, related to the extension-regularization technique presented in \ref{subsec:Numerical_Algorithm} for the computation of the associated descent direction (i.e., in other words, we somehow view the evolution of the free boundary $\Sigma$ generated by the approximation process as an evolving boundary problem).
We emphasize that the discussion given below does not attempt to prove the existence and/or convergence of approximate shape solutions concerning the present shape optimization problem.
Indeed, a careful analysis along the lines of arguments used in \cite[Sec. 3]{EpplerHarbrechtSchneider2007} (see also \cite{HaslingerMakinen2003}) is needed to address such delicate issue and actually goes beyond the scope of the present study.
Besides, some key assumptions on the admissible set and on the functional $J$ have to be imposed (see \cite[Sec. 3]{EpplerHarbrechtSchneider2007}).
Moreover, as in many optimization problems, it is important to define the notion of convergence of $\Omega(t)$ to the stationary point $\Omega^{\ast}$ if one wants to view the sequence $\{\Omega(t_{k})\}$, $k=0,1,\ldots$, as a sequence of approximations of $\Omega^{\ast}$ in the discrete setting; see \cite[Chap. 2]{HaslingerMakinen2003}.
For a closely related topic concerning a finite element approximation for shape optimization problems with mixed boundary conditions, we refer the readers to \cite{Tiba2011}.
In the sequel, the phrases `moving boundary', `evolving boundary', and `free boundary' are used interchangeably. 

To proceed, let us consider the following abstract autonomous evolving boundary problem.
\begin{problem}
\label{prob:abstractMBP}
	Let $\Omega$ be a bounded annular domain with $\mathcal{C}^{\circ}$ regular boundaries $\Gamma$ and $\Sigma$ such that $\Sigma$ is exterior to the fixed (non-moving) boundary $\Gamma$.
	Given an initial geometry $\Sigma^{0}$ that is $\mathcal{C}^{\circ}$ regular and a real-valued function $\Phi$ defined on $\Sigma$ (i.e., $\Phi(\,\cdot\,;\Sigma):\Sigma \to \mathbb{R}$), find a moving boundary/surface $\Sigma(t)$, $t\geqslant 0$, with the normal speed $V_{n}$ which satisfies
	\begin{equation}
	\label{eq:abstractMBP}
		V_{n}(x,t)	= \Phi(x;\Sigma(t)),\quad x \in \Sigma(t), \quad t \geqslant 0,\qquad
		\Sigma(0)	= \Sigma^{0}.
	\end{equation}
\end{problem}

In \eqref{eq:abstractMBP}, to keep the regularity of the initial domain -- which we assume to be at least $\mathcal{C}^{\circ}$ regular --
during evolution, one needs the function $\Phi(x;\Sigma(t))$ to also be at least $\mathcal{C}^{\circ}$ for all $x \in \Sigma(t)$, for $t>0$ (unless specified, this will be assumed in the rest of the discussion).
Given this assumption on $\Phi$, for sufficiently small $\varepsilon > 0$, it can be shown that the moving boundary $\Sigma(t)$ actually maintains the $\mathcal{C}^{\circ}$ regularity throughout the short time interval $[0,\varepsilon)$.

We next define a stationary solution to Problem \ref{prob:abstractMBP} as follows. 
\begin{definition}
	A domain $\Omega^{\ast}$ is said to be a \textit{stationary solution} to Problem \ref{prob:abstractMBP} if $\Sigma^{\ast} = \partial \Omega^{\ast}\setminus \Gamma$, and $\Phi(x;\Sigma^{\ast}) = 0$, for all $x \in \Sigma^{\ast}$.
\end{definition}
%
%
Also, in accordance with the discussion issued in the previous subsection about the extension-regularization technique used to compute for the perturbation field, we associate with Problem \ref{prob:abstractMBP} the extended-regularized evolving boundary problem stated as follows.
\begin{problem}
\label{prob:extended_regularized}
	Given an annular domain $\Omega^{0} \in \mathcal{C}^{\circ}$ with boundary $\Gamma \cup \Sigma^{0}$ ($\Sigma^{0}$ is exterior to $\Gamma$) and a real-valued function $\Phi(\,\cdot\, ;\Sigma(t)) \in L^2(\Sigma(t)) \cap \mathcal{C}^{1,\alpha}(\Sigma(t))$, $\alpha > 0$, $t>0$, we seek to find an evolving boundary/surface $\Sigma(t)$ with initial profile $\Sigma(0)=\Sigma^{0}$ that solves the problem
	\begin{equation}
	\label{eq:extended_regularized}
	\left\{\arraycolsep=1.4pt\def\arraystretch{1}
	\begin{array}{rcll}
		- \Delta\VV + \VV &=& \vect{0}	&\text{in $\Omega(t)$},\\
		{\VV} 			&=& \vect{0} 	&\text{on $\Gamma$},\\
 		\nabla \VV \cdot \nn 	&=& \Phi(\,\cdot\, ;\Sigma(t)) {\nn} & \text{on $\Sigma(t)$},\\
		V_{n} 			&=& \VV \cdot \nn	& \text{on $\Sigma(t)$,\quad $t \geqslant 0$},\\
		\Omega(0) 		&=& \Omega^{0}.
	\end{array}
	\right.
	\end{equation}
\end{problem}
For a domain $\Omega$ that is $\mathcal{C}^{2,\alpha}$ regular, $\alpha > 0$, the (outward unit) normal vector $\nn$ is $\mathcal{C}^{1,\alpha}(\partial\Omega)$ smooth.
Hence, for fixed $t\geqslant 0$ and $\Omega(t) \in \mathcal{C}^{2,\alpha}$, it can be proved (using, for example, the results from \cite{GilbargTrudinger2001,LadyzenskajaUralceva1968}) that the first three equations in \eqref{eq:extended_regularized} admits a unique (classical) solution $\VV \in \vect{\mathcal{C}}^{2,\alpha}(\Omega(t))$, for any given $\Phi(\,\cdot\, ;\Sigma(t)) \in \mathcal{C}^{1,\alpha}(\Sigma(t))$, $\alpha > 0$.
Notice here that, for fixed $t$, $\Phi(\,\cdot\, ;\Sigma(t))$ only needs to be $\mathcal{C}^{1,\alpha}(\Sigma(t))$ regular and $\Omega(t) \in \mathcal{C}^{2,\alpha}$ for $\VV$ to be $\vect{\mathcal{C}}^{2,\alpha}(\Omega(t))$ smooth.
We also remark that the evolution of $\Omega(t)$ is essentially assumed here as perturbations of $\Omega^{0}$ that can also be obtained via a diffeomorphic map which is close to the identity.
%
%
%
%

	We called equation \eqref{eq:extended_regularized} an extended-regularized evolving boundary problem because, originally in Problem \ref{prob:abstractMBP}, $V_{n}$ is only supported on $\Sigma(t)$,
	and, in \eqref{eq:extended_regularized}, we want not only to extend the vector $V_{n}\nn$ in the entirety of $\Omega(t)$, for $t>0$ via equation \eqref{eq:extended_regularized}$_1$, but also to add more regularity on the normal flow field through equation \eqref{eq:extended_regularized}$_3$.
	Meanwhile, we have equation \eqref{eq:extended_regularized}$_2$ on $\Gamma$ since we want the interior boundary to remain fixed during evolution.
%
%
%

Finally, in relation to Problem \ref{prob:extended_regularized}, a stationary solution $\Omega^{\ast}$ is define next.
\begin{definition}
	A domain $\Omega^{\ast} \in \mathcal{C}^{\circ}$ is said to be a \textit{stationary solution} to Problem \ref{prob:extended_regularized} if $\Sigma^{\ast} = \partial \Omega^{\ast}\setminus \Gamma$ and ${\VV} \in \vect{H}^{1}_{\Gamma, \vect{0}}(\Omega^\ast) \cap \vect{\mathcal{C}}^{\circ}(\Omega^\ast)$\footnote{Here we define $\vect{H}^{1}_{\Gamma, \vect{0}}(\Omega^\ast):=H_{\Gamma, \vect{0}}^1(\Omega^\ast ;\mathbb{R}^d)$, $d\in\{2,3\}$.} satisfies the equation
\begin{equation}\label{eq:Sobolev_gradient_equation}
  \begin{aligned}\
    \intOast{ \left( \nabla {\VV} : \nabla \vect{\varphi} +  {\VV} \cdot \vect{\varphi}\right) } 
    	 &= \intSast{ \Phi(\cdot;\Sigma ^{\ast}) {\nn} \cdot \vect{\varphi}},
            \quad \forall \vect{\varphi} \in \vect{H}^{1}_{\Gamma, \vect{0}}(\Omega^\ast),\\
      	\text{and}\qquad \VV \cdot \nn &= 0 \quad \text{on $\Sigma^\ast$}.
  \end{aligned}
\end{equation}
\end{definition}
For a domain $\Omega^\ast$ of class $\mathcal{C}^{0,1}$ and function $\Phi(\,\cdot\, ;\Sigma) \in L^2(\Sigma)$, the variational problem \eqref{eq:Sobolev_gradient_equation} can be shown to have a weak solution ${\VV} \in \vect{H}^{1}_{\Gamma, \vect{0}}(\Omega^\ast)$ via Lax-Milgram lemma.
With the definitions given above, we will now issue the main point of this subsection which is given in the next proposition.
Here, we will tacitly assume -- for the sake of argument -- that $\Phi$ vanishes within a short time interval and that we have the convergence of the evolving domains to a stationary point $\Omega^{\ast}$ at that time interval without referring to a formal mathematical notion of convergence of sets.
On a related note, we point out that small and smooth perturbations of a regular domain may be ``uniquely'' described by normal deformations of the boundary of the domain, see \cite{NovruziPierre2002}.
\begin{proposition}
\label{prop:convergence_to_a_stationary_point}
	Let $\Omega^{\ast} \in \mathcal{C}^{\circ}$ and $\Phi(\,\cdot\, ;\Sigma) \in L^2(\Sigma) \cap \mathcal{C}^{\circ}(\Omega)$.
	Then, $\Omega^{\ast}$ is a stationary solution to Problem \ref{prob:abstractMBP} if and only if $\Omega^{\ast}$ is a stationary solution to Problem \ref{prob:extended_regularized}.
\end{proposition}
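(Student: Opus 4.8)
The plan is to prove both implications by exploiting the coercivity of the bilinear form $a(\VV,\vect{\varphi}) = \intOast{(\nabla \VV : \nabla \vect{\varphi} + \VV \cdot \vect{\varphi})}$ on $\vect{H}^{1}_{\Gamma, \vect{0}}(\Omega^\ast)$ (this is precisely the $\vect{H}^{1}$-inner product, hence coercive) together with a trace–density argument on $\Sigma^\ast$. The whole proof reduces to testing the variational identity \eqref{eq:Sobolev_gradient_equation} against cleverly chosen fields.

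First I would treat the forward direction. Suppose $\Omega^{\ast}$ is a stationary solution to Problem \ref{prob:abstractMBP}, so that $\Phi(\,\cdot\,;\Sigma^\ast) = 0$ on $\Sigma^\ast$. Then the right-hand side of \eqref{eq:Sobolev_gradient_equation} vanishes identically, and choosing $\vect{\varphi} = \VV$ yields $\intOast{(|\nabla \VV|^2 + |\VV|^2)} = 0$, whence $\VV \equiv \vect{0}$ in $\Omega^\ast$. In particular $\VV \cdot \nn = 0$ on $\Sigma^\ast$, so both requirements in the definition of a stationary solution to Problem \ref{prob:extended_regularized} are met, and $\Omega^\ast$ is stationary for Problem \ref{prob:extended_regularized}.

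For the converse, assume $\Omega^\ast$ is stationary for Problem \ref{prob:extended_regularized}; the admissibility condition already provides $\VV \cdot \nn = 0$ on $\Sigma^\ast$. Testing \eqref{eq:Sobolev_gradient_equation} with $\vect{\varphi} = \VV$ and invoking this boundary condition gives $\intOast{(|\nabla \VV|^2 + |\VV|^2)} = \intSast{\Phi(\,\cdot\,;\Sigma^\ast)(\nn \cdot \VV)} = 0$, so again $\VV \equiv \vect{0}$. Substituting $\VV = \vect{0}$ back into \eqref{eq:Sobolev_gradient_equation} eliminates the left-hand side and leaves the orthogonality relation $\intSast{\Phi(\,\cdot\,;\Sigma^\ast)(\nn \cdot \vect{\varphi})} = 0$ for every $\vect{\varphi} \in \vect{H}^{1}_{\Gamma, \vect{0}}(\Omega^\ast)$.

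It remains to deduce $\Phi(\,\cdot\,;\Sigma^\ast) \equiv 0$ on $\Sigma^\ast$ from this relation, and this last trace–density step is the only delicate point. Here I would use that $\Omega^\ast$ is annular, so $\Gamma$ and $\Sigma^\ast$ are disjoint components of $\partial\Omega^\ast$; consequently, for any scalar $\psi \in H^{1/2}(\Sigma^\ast)$ one may construct, by surjectivity of the trace operator, a field $\vect{\varphi} \in \vect{H}^{1}_{\Gamma, \vect{0}}(\Omega^\ast)$ with trace $\psi\nn$ on $\Sigma^\ast$ and $\vect{\varphi} = \vect{0}$ on $\Gamma$, so that $\nn \cdot \vect{\varphi} = \psi$ exhausts all of $H^{1/2}(\Sigma^\ast)$. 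The identity then reads $\intSast{\Phi(\,\cdot\,;\Sigma^\ast)\psi} = 0$ for all $\psi \in H^{1/2}(\Sigma^\ast)$; since $H^{1/2}(\Sigma^\ast)$ is dense in $L^2(\Sigma^\ast)$ and $\Phi(\,\cdot\,;\Sigma^\ast) \in L^2(\Sigma^\ast)$, I conclude $\Phi(\,\cdot\,;\Sigma^\ast) = 0$ almost everywhere, hence everywhere by the assumed $\mathcal{C}^{\circ}$ continuity. This renders $\Omega^\ast$ stationary for Problem \ref{prob:abstractMBP}, completing the equivalence. The disjointness of $\Gamma$ and $\Sigma^\ast$ is exactly what guarantees that the homogeneous constraint on $\Gamma$ does not restrict the admissible traces on $\Sigma^\ast$, which is the crux of this final argument.
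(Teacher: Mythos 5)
Your proof is correct and follows essentially the same route as the paper: in both directions you test the variational identity \eqref{eq:Sobolev_gradient_equation} with $\vect{\varphi} = \VV$ to force $\VV \equiv \vect{0}$ by coercivity of the $\vect{H}^{1}$-inner product, and for sufficiency you substitute $\VV = \vect{0}$ back to obtain $\intSast{\Phi(\,\cdot\,;\Sigma^{\ast})\, \nn \cdot \vect{\varphi}} = 0$ for all admissible $\vect{\varphi}$, exactly as the paper does. The only differences are cosmetic improvements: your explicit trace-surjectivity and density argument fills in the final step that the paper asserts without justification, and your forward direction avoids the paper's unnecessary detour through the maximum principle.
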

\begin{proof}
	Consider equation \eqref{eq:extended_regularized} over the stationary shape $\overline{\Omega^\ast}$ with Lipschitz boundary $\partial\Omega^{\ast} = \Gamma \cup \Sigma^\ast$.
	For the necessity part, we assume that $L^2(\Sigma) \cap \mathcal{C}^{\circ}(\Omega) \ni \Phi(\,\cdot\, ;\Sigma) = 0$, and we need to verify that $\VV \cdot \nn = 0$ on $\Sigma^\ast$.
	To this end, we multiply the first equation in \eqref{eq:Sobolev_gradient_equation} by $\VV \in \vect{H}^{1}_{\Gamma, \vect{0}}(\Omega^\ast) \cap \vect{\mathcal{C}}^{\circ}(\Omega^\ast)$ and then apply integration by parts -- noting that $\VV = \vect{0}$ on $\Gamma$ -- to obtain
	$0 \leqslant
		\intOast{|\VV|^2}
		= - \intSast{\dn{\VV} \cdot \VV}
		= - \intOast{|\nabla \VV|^2}
		\leqslant 0$.
	Clearly, $\VV = \vect{0}$ in $\Omega^{\ast}$.
	Moreover, because $\VV\big|_{\Gamma} = \vect{0}$, then, by the maximum principle, $\VV \equiv \vect{0}$ on $\overline{\Omega}^{\ast}$,
	In particular, we have $\VV\cdot {\nn} = 0$ on $\Sigma^\ast$.

  	For the sufficiency part, we need to prove that if $\VV \cdot \nn = 0$ on $\Sigma^\ast$, where $\VV$ solves problem \eqref{eq:extended_regularized} on $\overline{\Omega^\ast}$, then $\Phi = 0$ on $\Sigma^\ast$.
	We take $\vect{\varphi} = \VV \in \vect{H}_{\Gamma, \vect{0}}^1(\Omega^\ast)$ in \eqref{eq:Sobolev_gradient_equation} from which we get
	$\intOast{ \left( \nabla {\VV} : \nabla {\VV} +  {\VV} \cdot {\VV}\right) } 
			= \intSast{\Phi(\,\cdot\, ;\Sigma^{\ast}) {\nn}\cdot\VV} = 0$.
	Clearly, $\VV \equiv \vect{0}$ on $\overline{\Omega}^{\ast}$.
	Going back to \eqref{eq:Sobolev_gradient_equation}, we obtain $\intSast{\Phi(\,\cdot\, ;\Sigma^{\ast}) {\nn} \cdot \vect{\varphi}} = 0$, for all $\vect{\varphi} \in \vect{H}_{\Gamma, \vect{0}}^1(\Omega^\ast)$, from which we conclude that $\Phi = 0$ on $\Sigma^\ast$.
	This proves the assertion.
\end{proof}
%
%
%

Let us look at the situation when $\Phi = -G$ in \eqref{eq:abstractMBP}.
In this case, $\Phi$ not only depends on some geometric quantities on the free boundary, but also to some functions which are solutions to specific equality constraints (the state and adjoint state equations to be exact).
For a domain $\Omega$ that is $\mathcal{C}^{k,\alpha}$ regular, $k \in \mathbb{N}$, $k \geqslant 2$, it can be shown that both the state and the adjoint state systems \eqref{eq:complexPDE} and \eqref{eq:adjoint_system}, respectively, admit a unique (classical) solution in the space $\mathcal{C}^{k,\alpha}(\overline{\Omega}; \mathbb{C}^{d})$.
In particular, the imaginary part $u_{2}$ of the state variable $u$ is an element of the set $\mathcal{C}^{k,\alpha}(\overline{\Omega}; \mathbb{C}^{d})$.
This implies that the adjoint state $p$ is even more regular, and is in fact $\mathcal{C}^{k+2,\alpha}$ smooth in $\overline{\Omega}$.
Meanwhile, the normal vector $\nn$ to $\Omega \in \mathcal{C}^{3,\alpha}$ is only $\vect{\mathcal{C}}^{2,\alpha}$ regular, implying that the mean curvature $\kappa$ is $\mathcal{C}^{1,\alpha}(\partial\Omega)$ smooth.
Since $G$ consists of $\kappa$, it appears that one even needs the initial domain $\Omega^{0}$ be at least $\mathcal{C}^{4,\alpha}$ regular for the evolving domain $\Omega(t)$ to be $\mathcal{C}^{2,\alpha}$ smooth for some short time interval $[0, \varepsilon)$.
On the other hand, considering Problem \ref{prob:extended_regularized} with $\Phi = -G$, it appears that it is enough to assume that $\Omega^{0} \in \mathcal{C}^{3,\alpha}$ for the moving domain $\Omega(t)$ to be $\mathcal{C}^{2,\alpha}$ smooth in some short time interval $[0, \varepsilon) \ni t$.
Indeed, in this case, $\Phi\nn = -G\nn \in \vect{\mathcal{C}}^{1,\alpha}(\Sigma(t))$, and so $\VV \in \vect{\mathcal{C}}^{2,\alpha}(\Omega(t))$, for $t \in [0, \varepsilon)$.
Hence, it can actually be shown that the weak solution ${\VV} \in \vect{H}^{1}_{\Gamma, \vect{0}}(\Omega(t))$ of \eqref{eq:extended_regularized} is also a classical solution of the problem over the small time interval $[0, \varepsilon)$.
Therefore, $\Omega(t)$ in \eqref{eq:extended_regularized} remains $\mathcal{C}^{2,\alpha}$ smooth throughout the short time interval $[0, \varepsilon)$.

	Although not so important, we provide additional comments about the evolving boundaries. 
	The evolution of the moving boundary $\Sigma(t)$ due to \eqref{eq:abstractMBP} and that of \eqref{eq:extended_regularized} through time are not the same.
	Indeed, only in the case that Proposition \ref{prop:convergence_to_a_stationary_point} is true we are sure that the two evolving boundaries coincide (except of course with the initial shape).
	Nevertheless, a more accurate extended-regularized version of \eqref{eq:abstractMBP} can be formulated by replacing the Neumann condition in \eqref{eq:extended_regularized} by the Dirichlet condition $\VV = \Phi(\,\cdot\, ;\Sigma(t)) {\nn}$ on $\Sigma(t)$, for $t>0$.
	In this case, however, the regularity of the moving boundary $\Sigma(t)$ is not preserved when the evolving boundary evolves according to the normal speed $V_{n} = \VV \cdot \nn$.
	Even so, one can address the issue to some extent by considering an approximation of the Dirichlet condition using a Robin condition.
	That is, we can add more regularity to the vector $V_{n}\nn$ by setting $\beta \nabla \VV + \VV = \Phi(\,\cdot\, ;\Sigma(t)) {\nn}$ on $\Sigma(t)$, for $t>0$, where $\beta > 0$.
	Still, with this alternative formulation, there is a trade-off (controlled by $\beta$) between the accuracy of the evolution and the regularity of the moving boundary.
	Here, we do not bother about the accuracy of the extended-regularized evolving boundary with respect to the corresponding original one as we are only interested in extending in $\Omega(t)$ the normal velocity vector $\Phi\nn$ of the evolving boundary $\Sigma(t)$ while adding more regularity to it.


\subsection{Numerical examples}\label{subsec:Numerical_Examples}
We now illustrate the feasibility and applicability of the new method in solving concrete examples of the free boundary problem \eqref{eq:Bernoulli_problem}.
We first test our method in two dimensions, and carry out a comparison with KVM (Examples \ref{example2d1}--\ref{example2d3}).
Also, since most of the previous studies only dealt with problems in two dimensions (except in \cite{EpplerHarbrecht2009} and also in \cite{Harbrecht2008} which applies the Newton scheme to the Dirichlet energy functional), we also put our attention on testing CCBM to three dimensional cases (Examples \ref{example3d1}--\ref{example3d5}).
In the case of three spatial dimensions, we let $\lambda = -10$, unless specified.
\begin{remark}[Details of the computational setup and environment]\label{rem:details_or_algorithm}
The numerical simulations conducted here are all implemented in the programming software \textsc{FreeFem++}, see \cite{Hecht2012}.
Every variational problem involved in the procedure is solved using $\mathbb{P}1$ finite element discretization
and are solved in \textsc{FreeFEM++} via the command line \texttt{problem} with the default solver (\texttt{sparsesolver} or \texttt{LU} if any direct sparse solver is available\footnote{see, e.g., p. 380 of FreeFem Documentation, Release 4.8, May 16, 2022, www.freefem.org.}).
Moreover, all mesh deformations are carried out without any kind of adaptive mesh refinement as opposed to what has been usually done in earlier works, see \cite{RabagoAzegami2019a,RabagoAzegami2019b,RabagoAzegami2020}.
We emphasize that, in this way, we can further assess the stability of CCBM in comparison with KVM.
The computations are all performed on a MacBook Pro with Apple M1 chip computer having 16GB RAM processors.
\end{remark}
\begin{remark}
As pointed out in Remark \ref{rem:details_or_algorithm}, we avoid the generation of a new triangulation of the domain at every iterative step. 
Obviously, this is achieved by moving not only the boundary, but also the internal nodes of the mesh triangulation at every iteration (as mentioned earlier).
By doing so, the mesh only needs to be generated at initial iteration. 
In order to move the boundary and internal nodes simultaneously, we solve the discretized version of \eqref{eq:extended_regularized} and then move the domain in the direction of the resulting vector field scaled with the pseudo-time step size $t^{k}$ (see Remark \ref{rem:step_size} for the computation of $t^{k}$).
That is, we find ${{\VV_{h}^{k}}} \in \mathbb{P}_1(\Omega_{h}^{k})^{d}$ such that it solves the equation
\[
	- \Delta {\VV_{h}^{k}} + {\VV_{h}^{k}}  		=  \vect{0} \ \ \text{in $\Omega_{h}^{k} $},\qquad
		{\VV_{h}^{k}} 				=  \vect{0} \ \ \text{on $\Gamma^h$},\qquad
 		{\nabla{\VV_{h}^{k}}} \cdot {\nn_{h}^{k}}	= -G^{k} {\nn_{h}^{k}} \ \ \text{on $\Sigma_{h}^{k}$},
\]
where we suppose a polygonal domain $\overline{\Omega_{h}^{k}}$ and its triangulation $\mathcal{T}_h(\overline{\Omega_{h}^{k}}) = \{ K^{k}_{l} \} ^{N_e}_{l=1}$ ($K^{k}_{l}$ is a closed triangle for $d=2$, or a closed tetrahedron for $d=3$) are given, and $\mathbb{P}_1(\Omega_{h}^{k})^{d}$ denotes the $\mathbb{R}^d$-valued piecewise linear function space on $\mathcal{T}_h(\overline{\Omega_{h}^{k}})$.
Then, we update the domain or, equivalently, move the nodes of the mesh by defining $\Omega^{k+1}_h$ and $\mathcal{T}_h(\overline{\Omega^{k+1}_h}) = \{ K^{k+1}_{l} \} ^{N_e}_{l=1}$ respectively as $\overline{\Omega^{k+1}_h}:=\left \{ x + t^k {\VV_{h}^{k}}(x) \ \middle\vert \ x \in \overline{\Omega_{h}^{k}} \right\}$ and $K^{k+1}_{l} := \left\{ x + t^k {\VV_{h}^{k}}(x) \ \middle\vert \ x \in K^{k}_l \right\}$, for all $k = 0,1,\ldots$. 
\end{remark}
We are now ready to give our first numerical example.
\begin{example}[Testing the accuracy of the gradient]\label{example2d1}
Consider two concentric circles centered at the origin $\mathbf{0}$ with radius $r > 0$ and $R > r$ given by $C(\boldsymbol{0},r)$ and $C(\boldsymbol{0},R)$, respectively.
Then, problem \eqref{eq:state_ud} can be expressed as the PDE system
 \[
	- \partial_{\rho\rho}^2 u  - \rho^{-1} \partial_{\rho} u = 0 \ \ \text{for $r < \rho < R$}, \qquad
	u(r) = 1, \qquad \text{and} \qquad 
	u(R) = 0,
\]
whose exact solution can be computed as $u(\rho) = \log{(\rho/R)}/\log{(r/R)}$.
Moreover, in this case, $\dn{u(R)} = 1/[R\log{(r/R)}]$.
Therefore, the exterior Bernoulli FBP \eqref{eq:Bernoulli_problem} with $\Gamma = \{x \in \mathbb{R}^2 : |x| = r\}$ and $\lambda = 1/\left[ R\log{\left(r/R\right)} \right]$, $0 < r < R$, has the unique exact free boundary solution $\Sigma^\ast = C(\boldsymbol{0},R)$. 
With this in mind, we let $r=0.5$ (i.e., $\Gamma=\Gamma_{\texttt{C}}:=C(\boldsymbol{0},0.5)$) and $R^\ast=0.7$, which gives us $\lambda = -4.24573$, and set $\Sigma^{0}=C(\boldsymbol{0},1.25)$ as our initial guess.
In this experiment, we examine the sensitivity of the cost functions $J$ and $J_{KV}$ by testing the methods with large variations.
To this end, we consider three test cases for KV by varying the step size parameter $\mu$: (KV)$_{1}$: $\mu = 2.0$; (KV)$_{2}$: $\mu = 1.0$; and (KV)$_{3}$: $\mu = 0.5$, while keeping $\mu = 2.0$ for CCBM.
Recall that $\mu$ dictates how large the magnitude of $t^{k}$ can be at every iteration, see Remark \ref{rem:step_size}.
Moreover, we discretize the initial domain with uniform mesh sizes and look at the effect of accuracy of the methods under different mesh widths.
Finally, we stop our algorithm as soon as the absolute difference between consecutive cost values is less than $10^{-6}$.
Figure \ref{fig2D1:cost_values_and_HD} shows the histories of cost values for KVM and CCBM, as well as the histories of Hausdorff distances $d_{\text{H}}(\Sigma^{k},\Sigma^\ast)$.
A summary of Hausdorff distances, cpu-time, and cpu-time-per-iteration against mesh sizes $h = 0.2, 0.1, 0.05, 0.025, 0.0125$ in the form of plots are plotted in Figure \ref{fig2D1:summary}.
Based from these results, we draw the following observations:
\begin{itemize}
	\item $J_{KV}$ is less sensitive to $J$ in terms of large variations;
	\item KVM tends to converge prematurely for coarse meshes unlike CCBM;
	\item KVM and CCBM nearly have the same convergence behavior for finer meshes;
	\item KVM and CCBM almost have the same accuracy for finer meshes, but CCBM is more accurate in general (primarily because of the second point);
	\item KVM and CCBM complete the iteration procedure at almost the same time,
	\item however, CCBM requires less computing-time-per-iteration than KVM,
	\item and the latter converges in less number of iterations (hence, the previous point).
\end{itemize}
Based from these observations, it seems that, in the case of two dimensional problems, CCBM features some merits over the KVM in terms of computational aspects.
Of course, we expect that these advantages can be exploited especially when dealing with three dimensional problems as we can instead resort to coarse meshes without concerning much ourselves with the accuracy of the approximation process -- at least when dealing with axisymmetric cases.
Also, it appears that we have more freedom to take large step sizes in the case of CCBM than when applying KVM as the former is less prone to premature convergence. 
However, as we already mentioned, KVM requires less number of iterations than CCBM, and that, under small step sizes, the two methods converges to almost identical optimal shape solutions.
\end{example}
%
%
%
\begin{figure}[htp!]
\centering
\resizebox{0.19\linewidth}{!}{\includegraphics{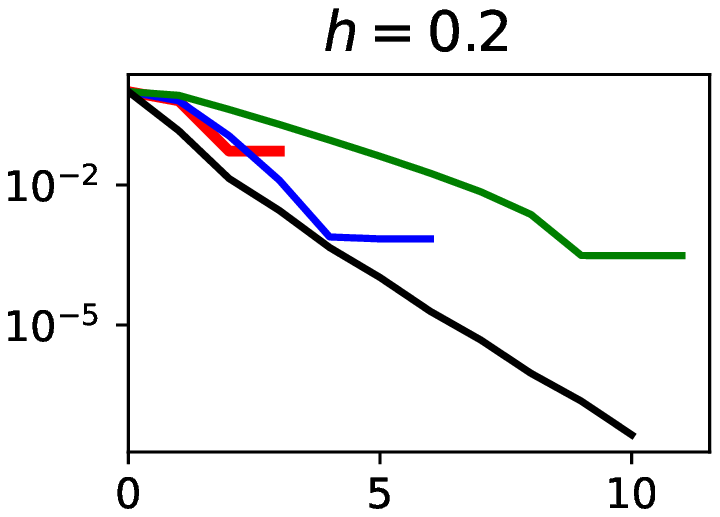}}
\resizebox{0.19\linewidth}{!}{\includegraphics{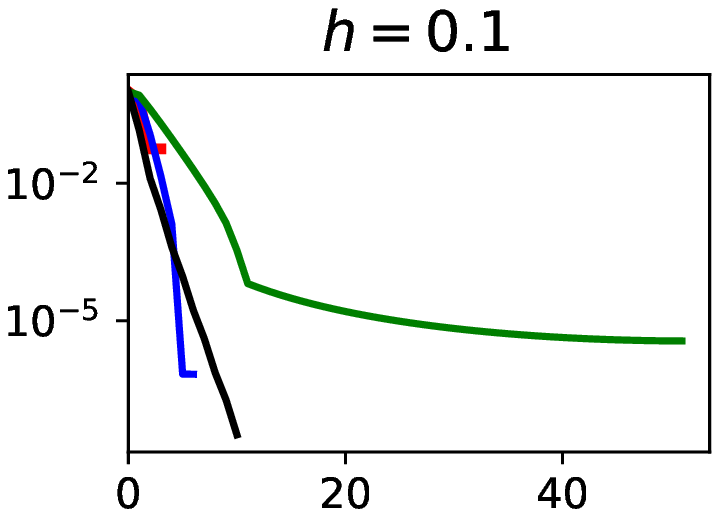}}
\resizebox{0.19\linewidth}{!}{\includegraphics{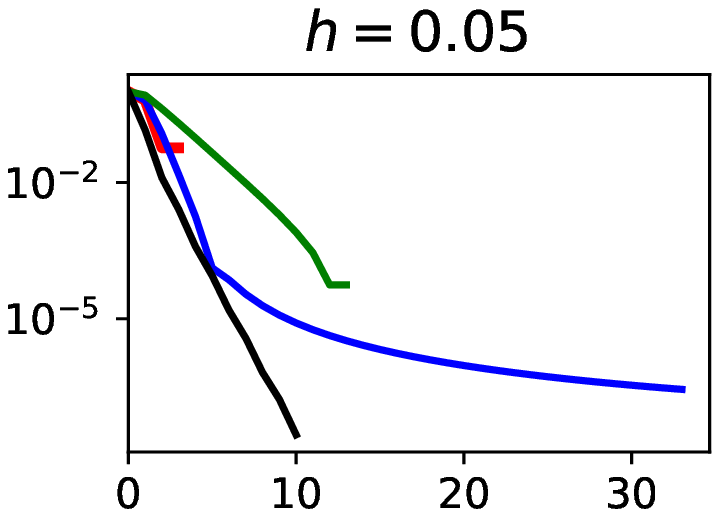}}
\resizebox{0.19\linewidth}{!}{\includegraphics{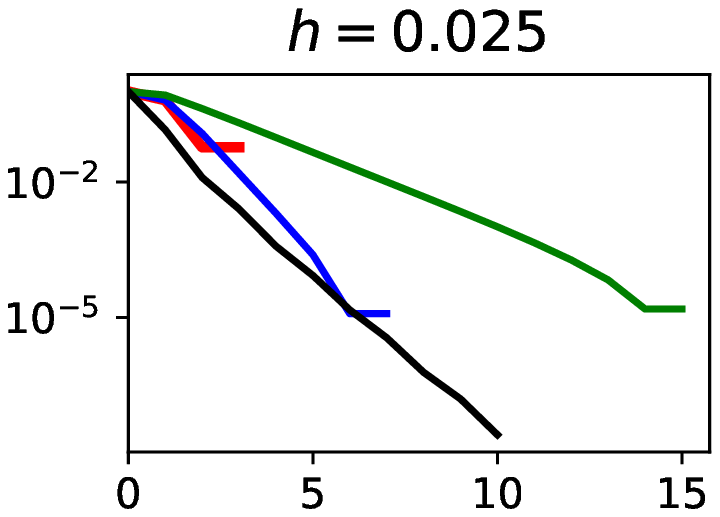}}
\resizebox{0.19\linewidth}{!}{\includegraphics{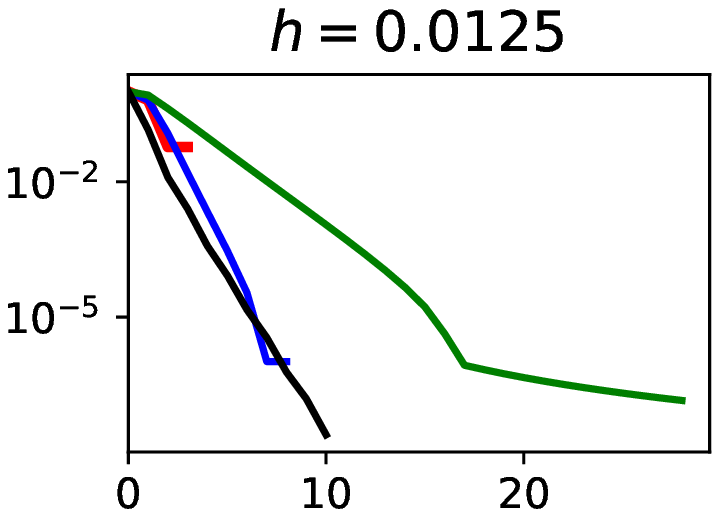}}\\
\resizebox{0.19\linewidth}{!}{\includegraphics{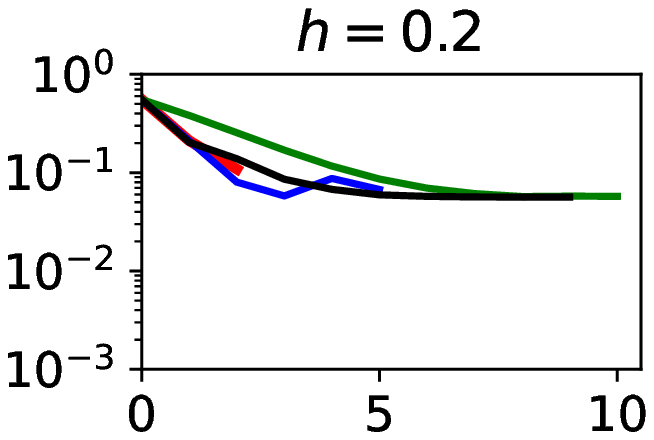}}
\resizebox{0.19\linewidth}{!}{\includegraphics{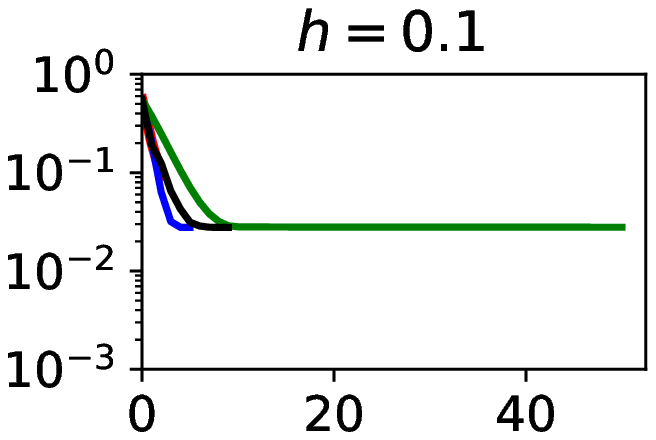}}
\resizebox{0.19\linewidth}{!}{\includegraphics{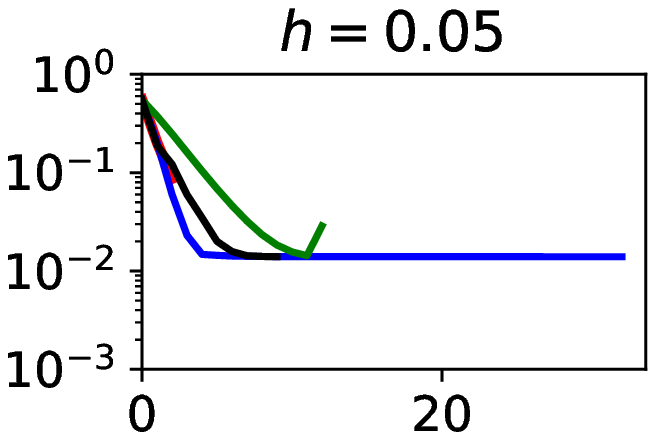}}
\resizebox{0.19\linewidth}{!}{\includegraphics{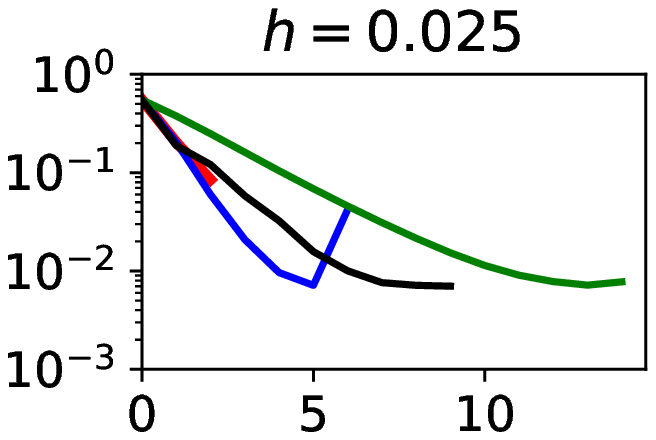}}
\resizebox{0.19\linewidth}{!}{\includegraphics{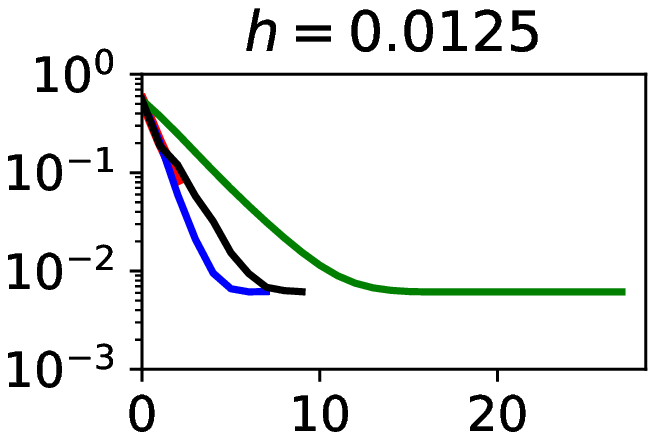}}
\caption{Histories of cost values (first row) and Hausdorff distances (second row) for Example \ref{example2d1}}
\label{fig2D1:cost_values_and_HD}
\end{figure}
%
%
%
%
%
%
%
%
%
%
%
%
\begin{figure}[htp!]
\centering
\resizebox{0.28\linewidth}{!}{\includegraphics{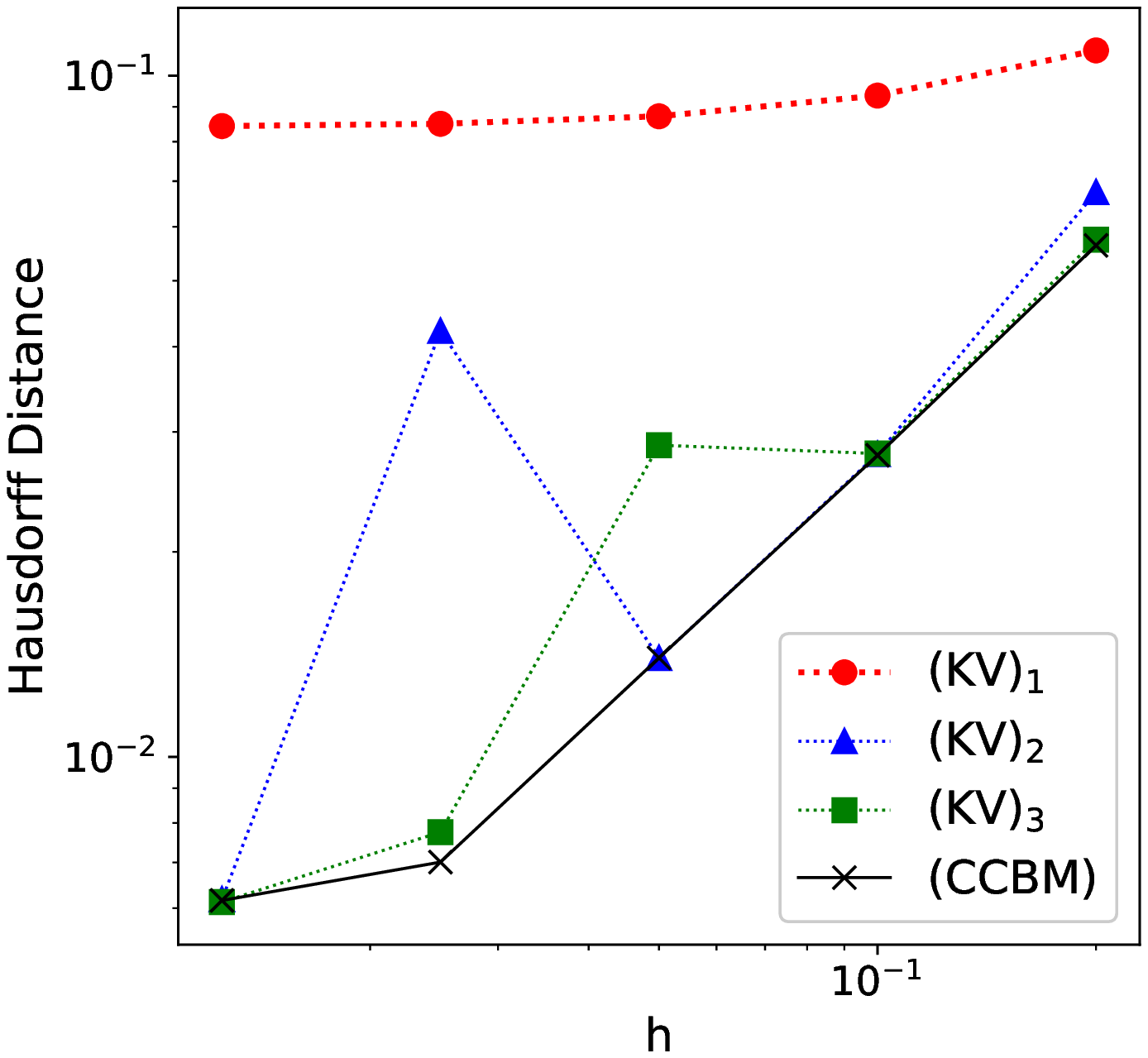}}\quad
\resizebox{0.28\linewidth}{!}{\includegraphics{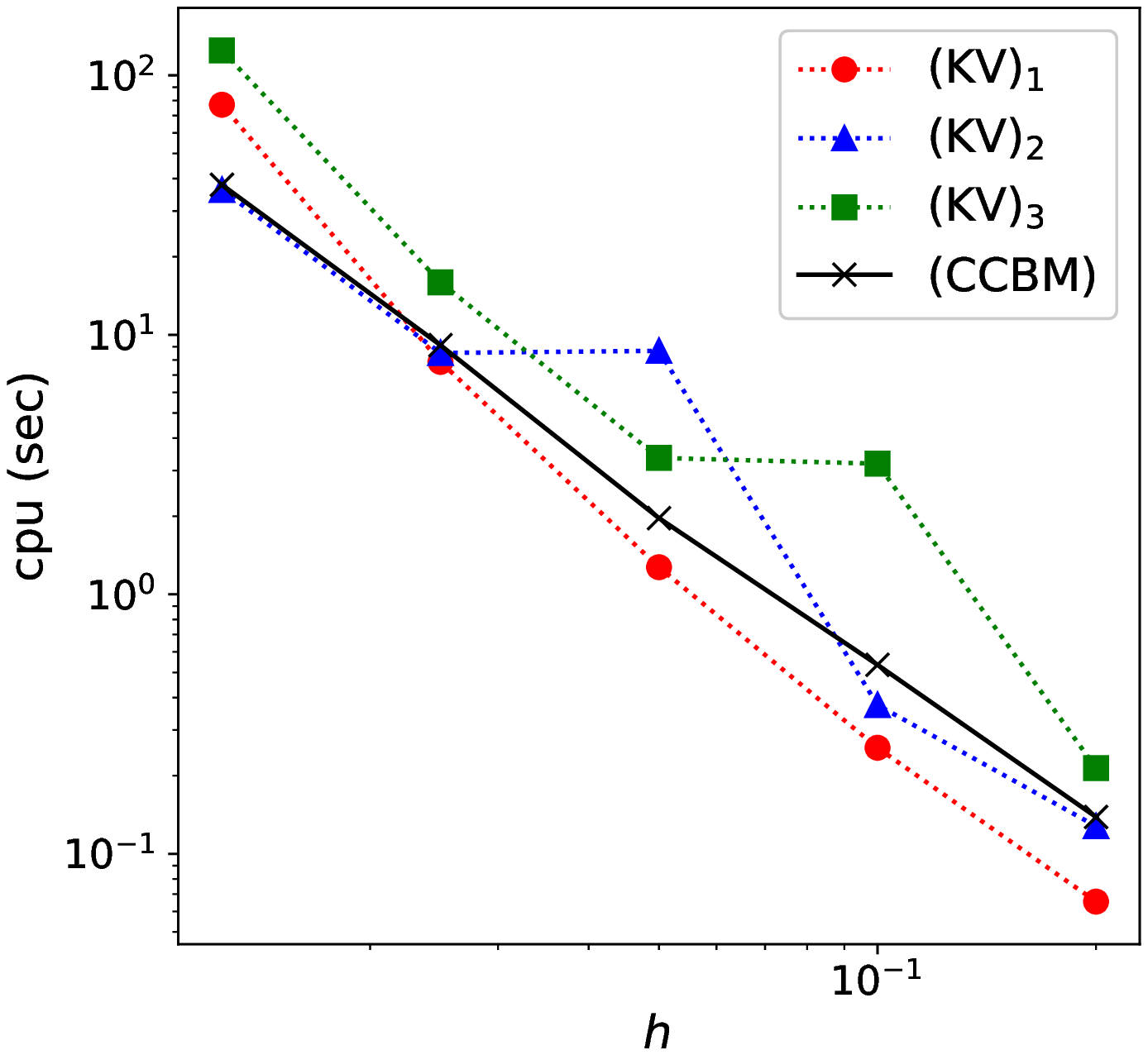}}\quad
\resizebox{0.28\linewidth}{!}{\includegraphics{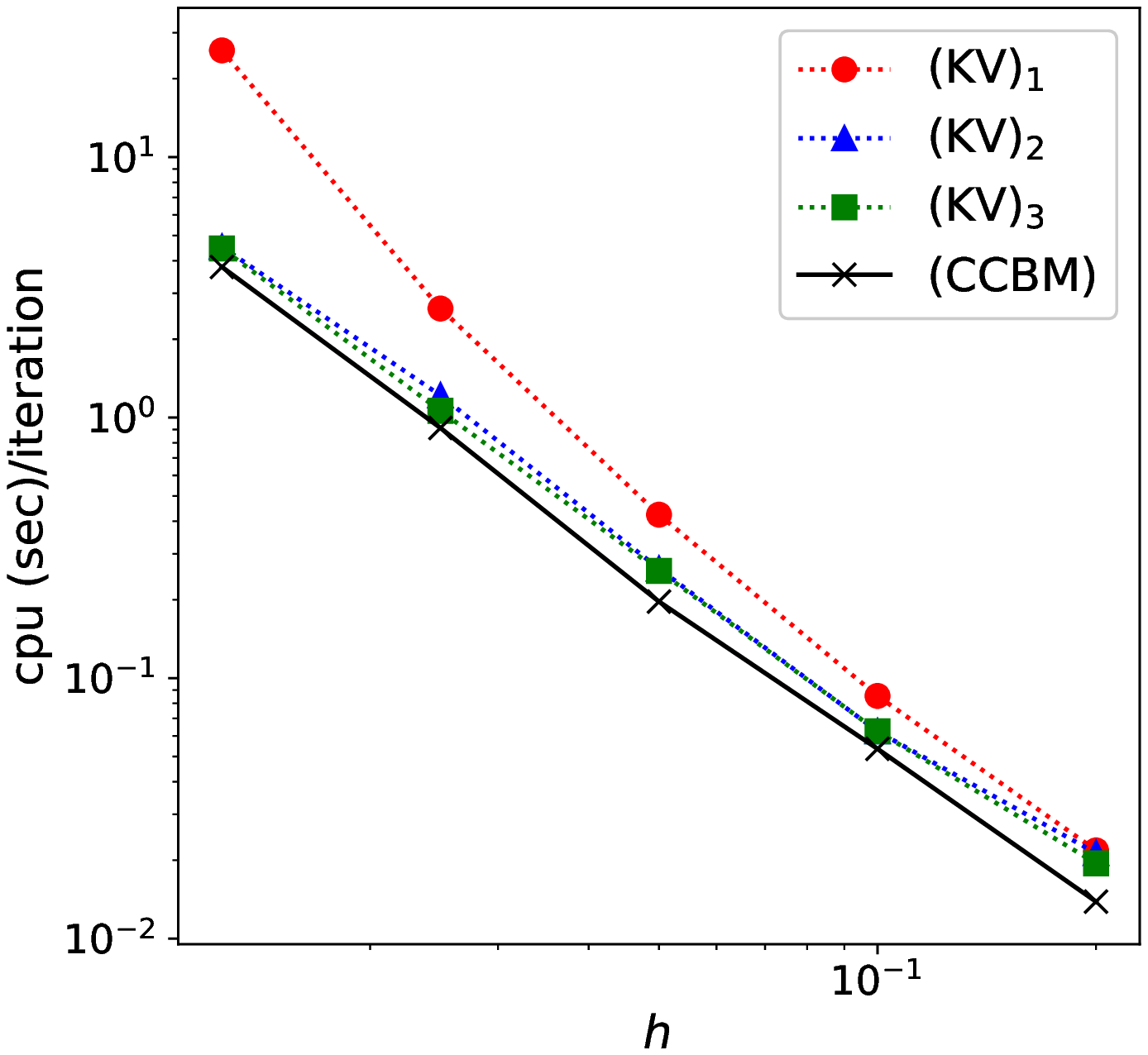}} 
\caption{Hausdorff distances, cpu-time, and cpu-time-per-iteration against the mesh size $h$}
\label{fig2D1:summary}
\end{figure}
In the next two examples, we carry out three different experiments for KVM to further highlight the sensitivity of $J$ in comparison with $J_{KV}$ in the case of a bit more complex geometries for the fixed boundary with concave regions.
We again test the effect of the step size parameter $\mu$ in the approximation process.
For CCBM, we fix $\mu$ to $1.0$.
The test experiments are as follows: test (KV)$_{a}$: $\mu = 1.0$; test (KV)$_{b}$: $\mu = 0.5$; and test (KV)$_{c}$: $\mu = 0.25$, and we look at the problems where $\lambda = -10, -9, \ldots, -1$.
Also, we choose $C(\boldsymbol{0},1.25)$ as the initial profile $\Sigma^{0}$ of the free boundary, and we stop the algorithm after completing $100$ iterations. 
\begin{example}[An \texttt{L}-shape fixed boundary]\label{example2d2} Let us consider an \texttt{L}-shape geometry for $\Gamma$ given by the boundary of the domain $D=(-0.25,0.25)^{2}\setminus[0,0.25]^{2}$, i.e., we set $\Gamma=\Gamma_{\texttt{L}}:=\partial D$.
A comparison between the computed free boundaries using the KVM and CCBM for each test experiments are shown in Figure \ref{fig2D} (first row).
In those figures, the outermost boundary corresponds to $\lambda = -1$ while the innermost (exterior) boundary corresponds to $\lambda = -10$.
The histories of $d_{\text{H}}(\Sigma^{k}, \Sigma_{100})$-values are shown in Figure \ref{fig2D:hd_Lshape} while the histories of cost values are plotted in Figure \ref{fig2D:J_Lshape}.
It appears based on the results that CCBM almost has the same convergence behavior with KVM in the case of larger values for $\lambda$, and a bit faster for the case of smaller values of $\lambda$.
It must be evident, however, that KVM tends to converge to a stationary shape in a fewer number of iterations than CCBM, but to a less accurate geometry for the free boundary.
In fact, KVM converges prematurely in some instances; see, for example, Figure \ref{fig2D_illustration} where the evolutions of the shapes (plotted at every ten iterates with $\mu = 2.0$) are illustrated in the case $\lambda = -5$.
Nevertheless, the two methods with small step sizes provide almost identical optimal solutions to the minimization problem.
Furthermore, we notice from the histories of Hausdorff distances and cost values that $J$ is more sensitive to large variations than $J_{KV}$.
Based on these observations, we can say that CCBM has some advantages over KVM in terms of computational performance, specifically with respect to overall computing-time-per-iteration (see left plot in Figure \ref{fig2D:timeGraphs}) and accuracy when taking large deformations of the domain.
However, as evident in Figures \ref{fig2D:hd_Lshape}--\ref{fig2D:J_Lshape}, KVM converges in fewer iterations than CCBM.
\end{example}
\begin{example}[A ribbon shape fixed boundary]\label{example2d3} Next, we consider a ribbon shape fixed boundary similar to the one examined in \cite{EpplerHarbrecht2006} which is parametrized as $\Gamma_{\texttt{R}}:=\{(0.45\cos \theta, 0.3\sin \theta(1.25 + \cos 2\theta))^\top \mid 0 \leqslant \theta \leqslant 2\pi\}$.
The results of the test experiments are depicted in Figure \ref{fig2D} (second row).
Meanwhile, the histories of the Hausdorff distance values $d_{\text{H}}(\Sigma^{k}, \Sigma_{100})$ and of the cost values are shown in Figure \ref{fig2D:hd_Rshape} and Figure \ref{fig2D:J_Rshape}, respectively.
In these computational results, we observe similar findings to the previous example in that $J_{KV}$ is less sensitive to large variations than $J$.
Moreover, in some cases, KVM  tends to overshoot the approximate optimal shape unlike CCBM.
Even so, as in the case of the previous test experiments, the present algorithm with the KVM uses fewer number of iterations to converge compared to CCBM.
\end{example}
\begin{figure}[htp!]
\centering
\resizebox{0.32\linewidth}{!}{\includegraphics{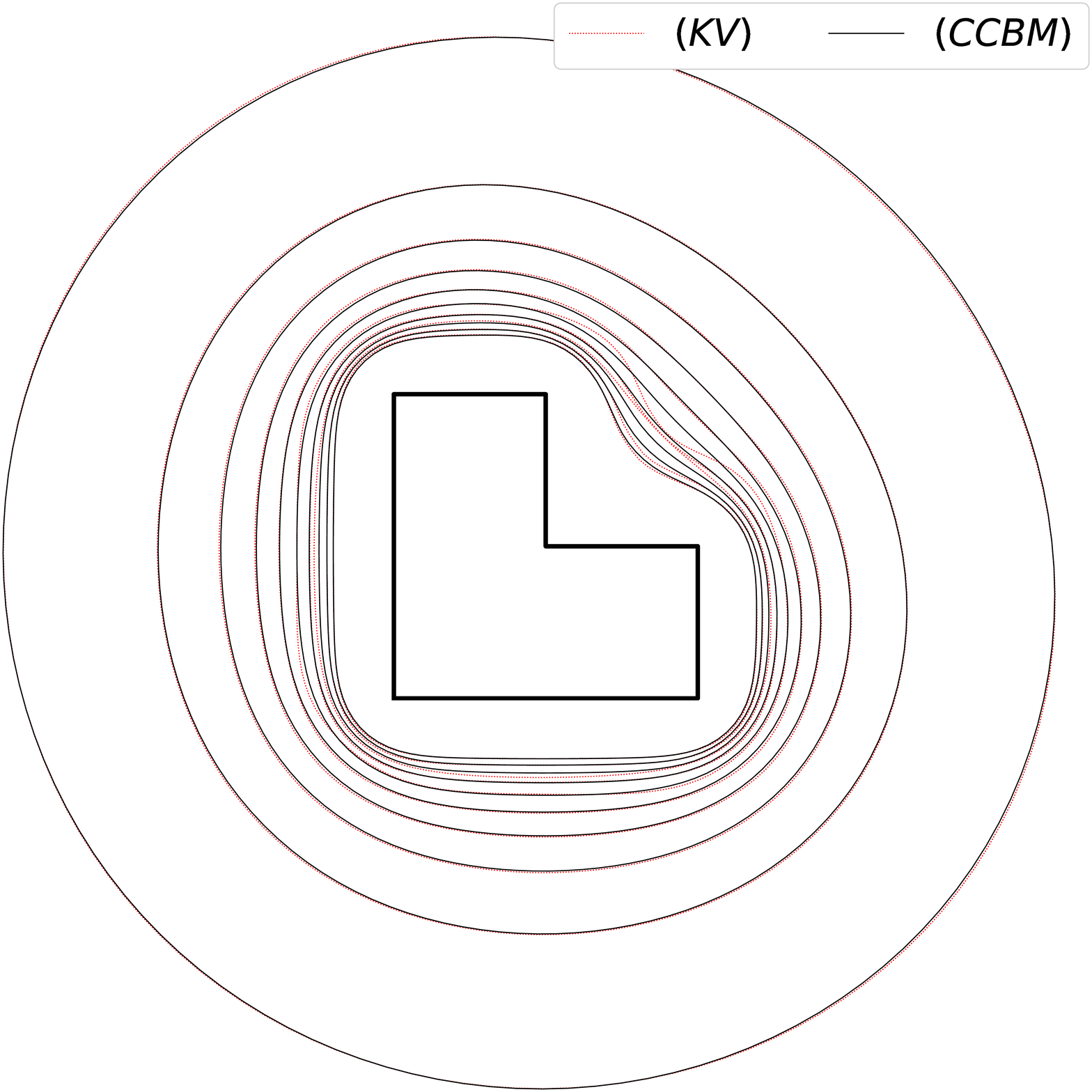}}
\resizebox{0.32\linewidth}{!}{\includegraphics{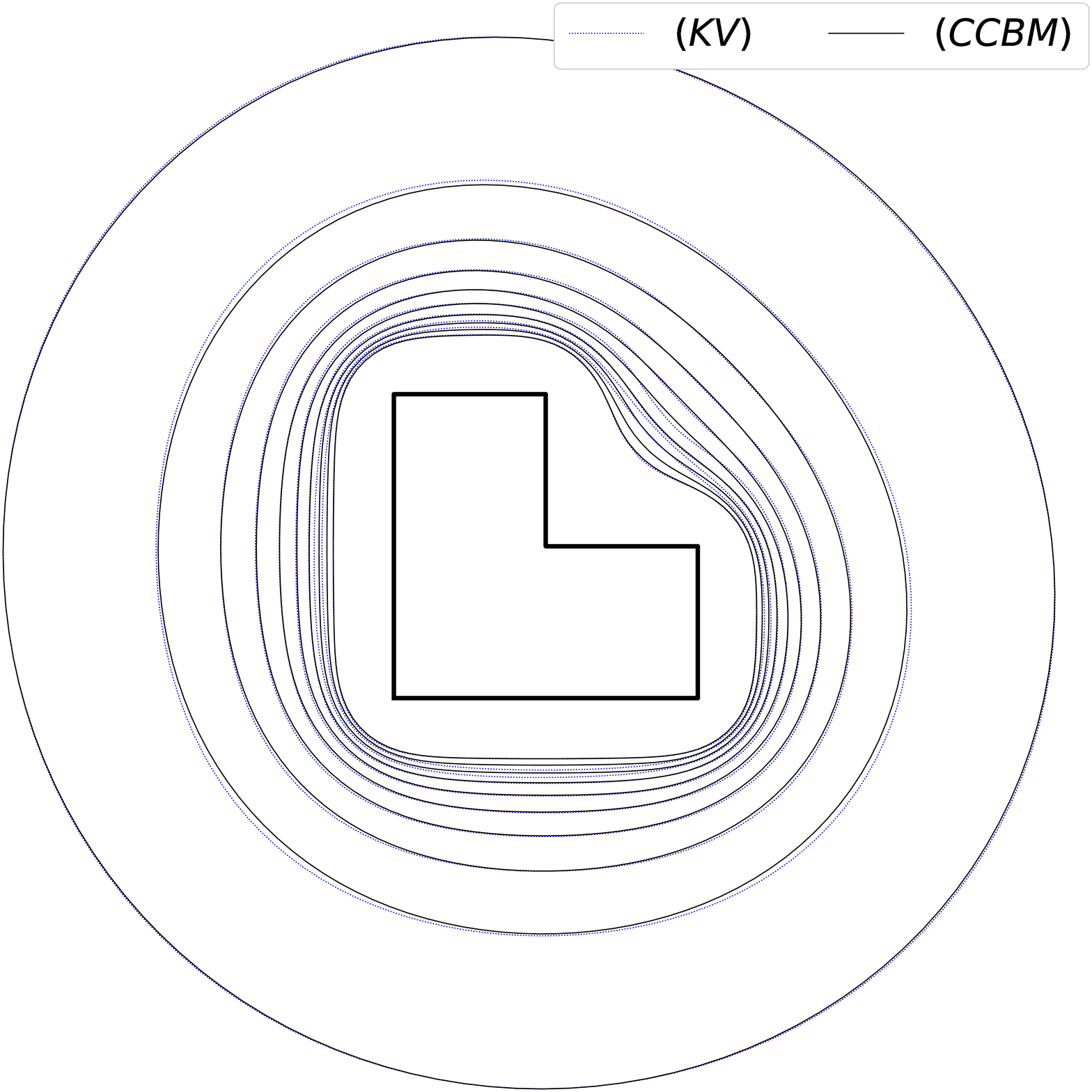}}
\resizebox{0.32\linewidth}{!}{\includegraphics{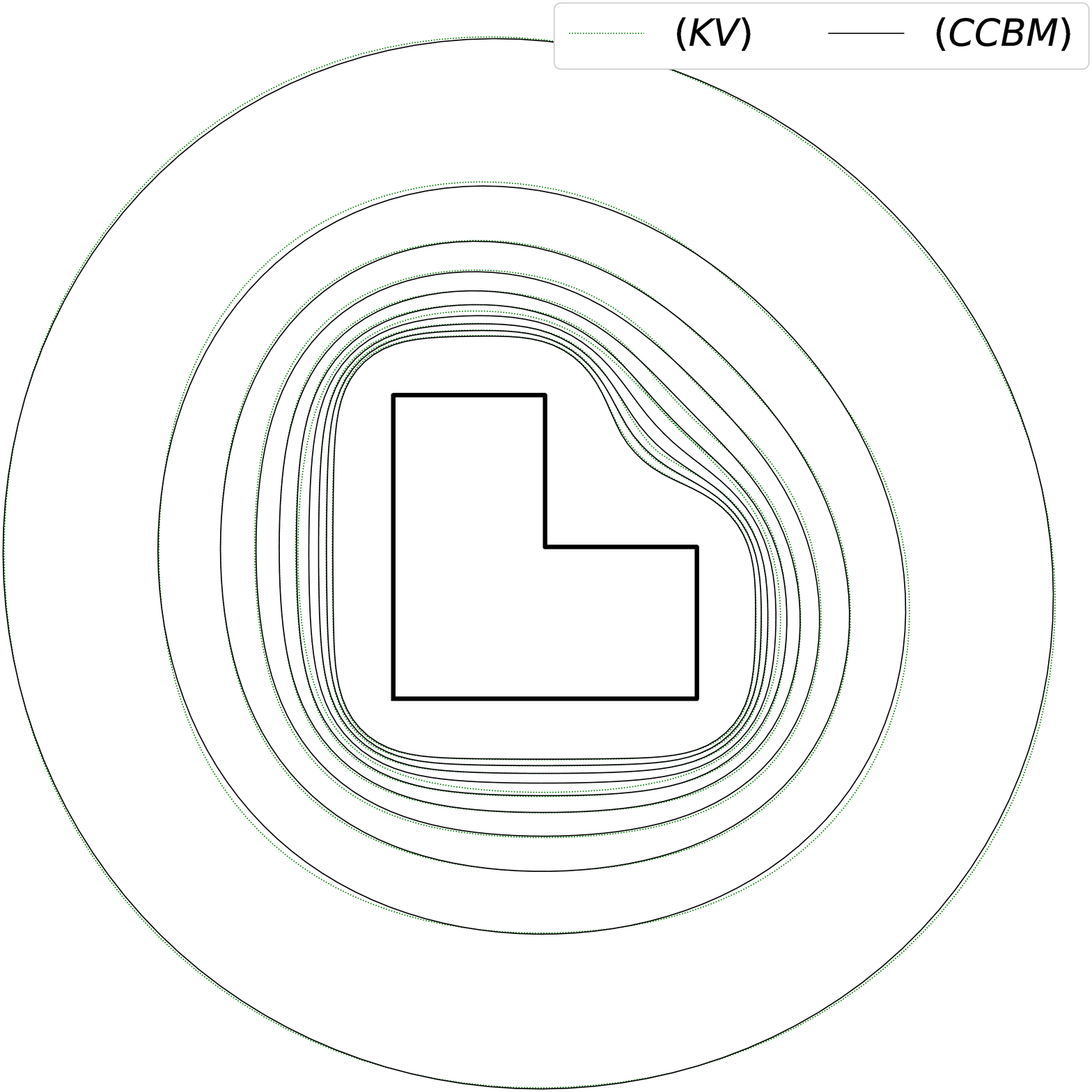}}\\
\resizebox{0.32\linewidth}{!}{\includegraphics{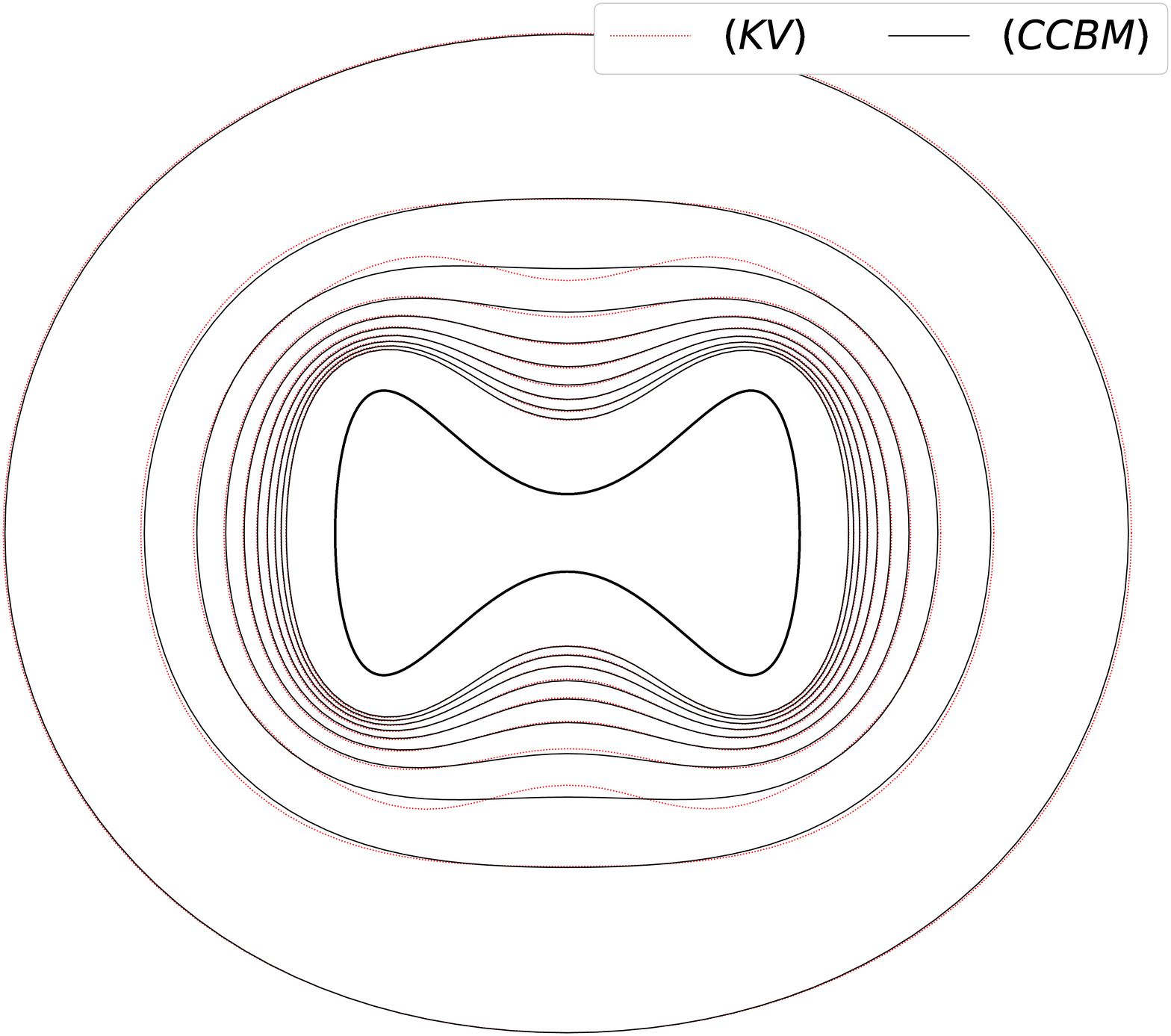}}
\resizebox{0.32\linewidth}{!}{\includegraphics{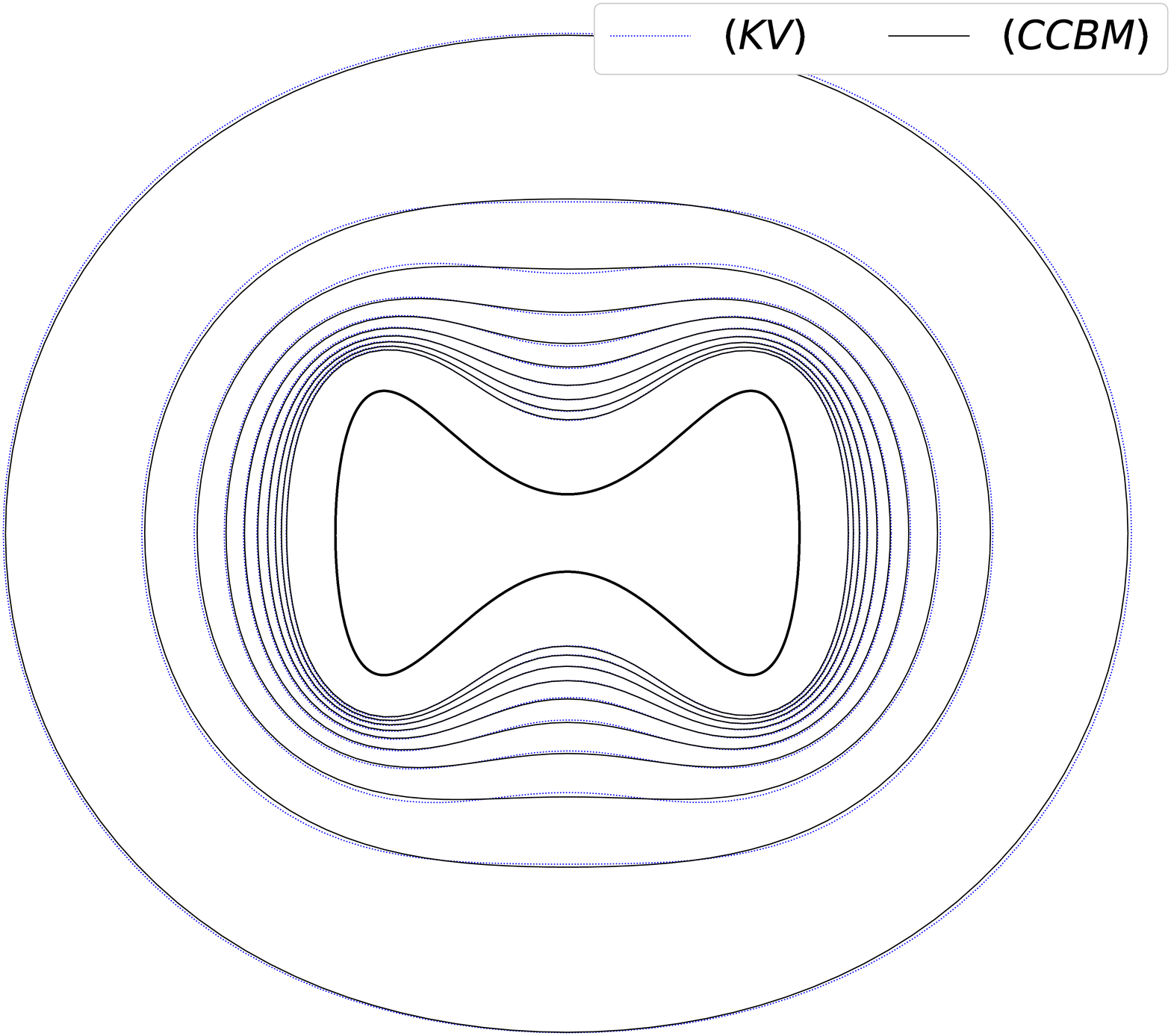}}
\resizebox{0.32\linewidth}{!}{\includegraphics{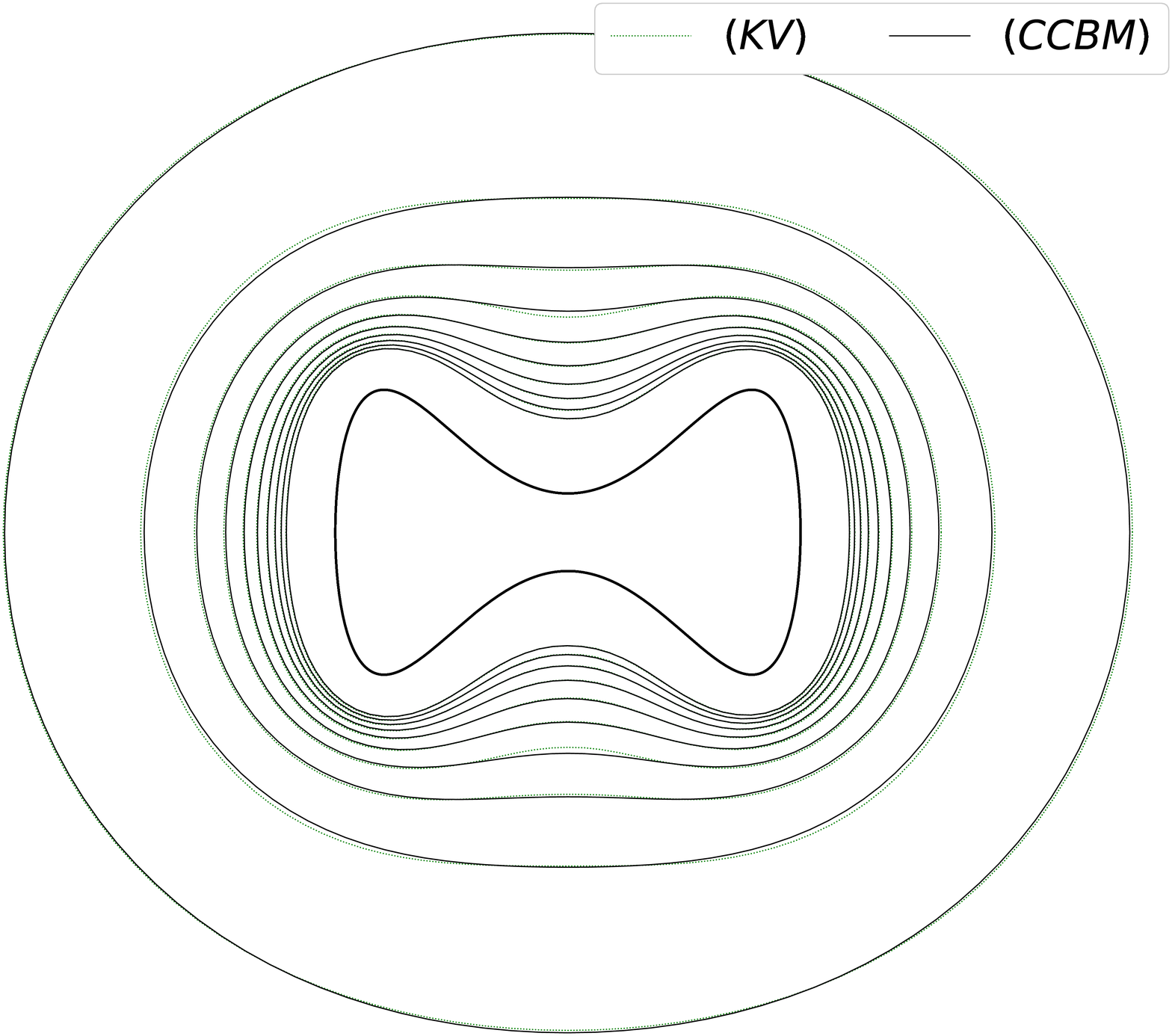}}
\caption{Computational results for Example \ref{example2d2} (first row) and Example \ref{example2d3} (second row)}
\label{fig2D}
\end{figure}

%
%
\begin{figure}[htp!]
\centering
\resizebox{0.19\linewidth}{!}{\includegraphics{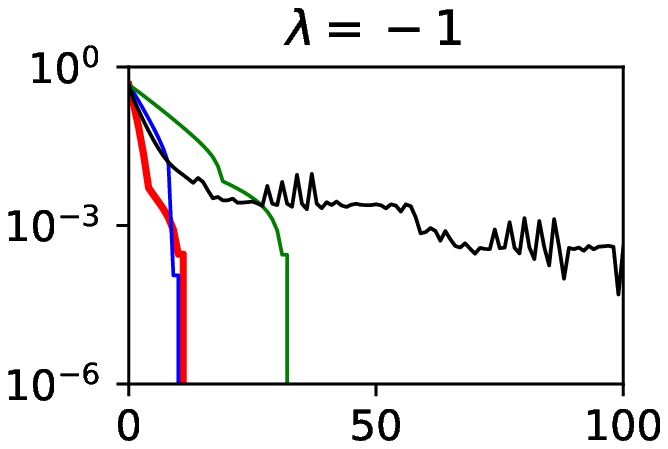}}
\resizebox{0.19\linewidth}{!}{\includegraphics{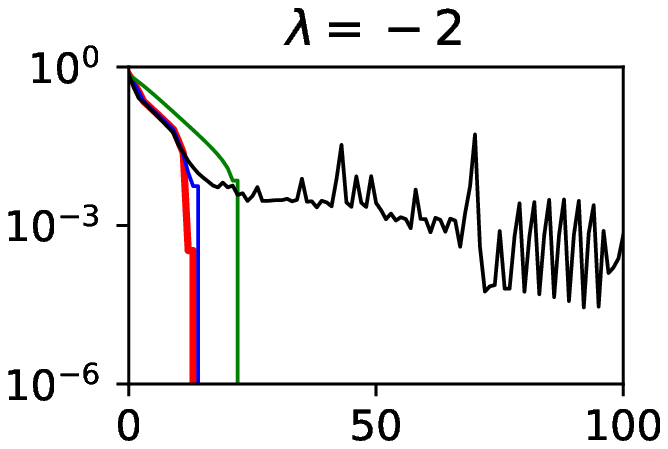}}
\resizebox{0.19\linewidth}{!}{\includegraphics{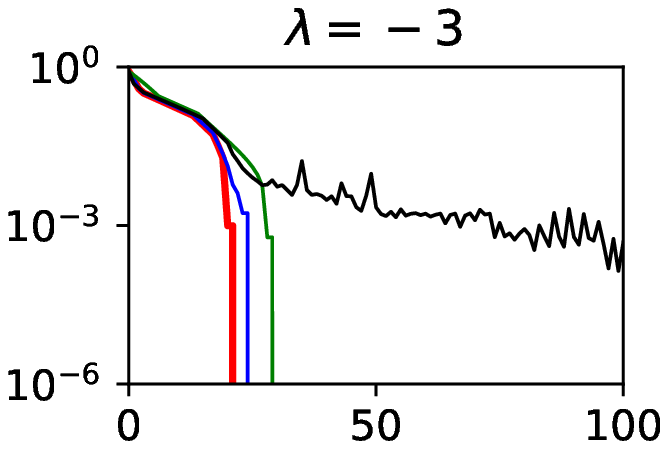}}
\resizebox{0.19\linewidth}{!}{\includegraphics{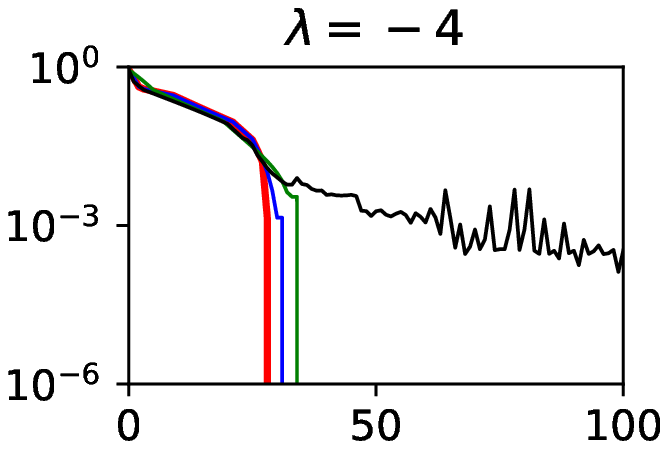}}
\resizebox{0.19\linewidth}{!}{\includegraphics{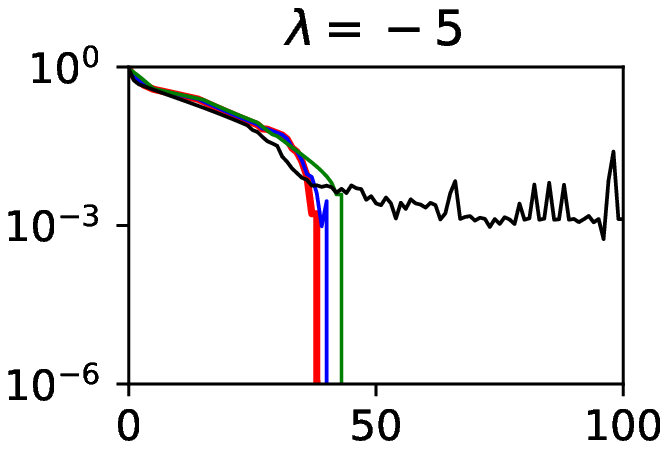}}\\
\resizebox{0.19\linewidth}{!}{\includegraphics{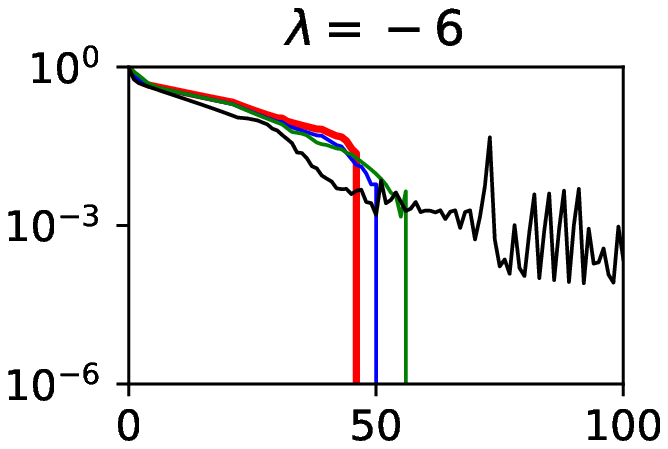}}
\resizebox{0.19\linewidth}{!}{\includegraphics{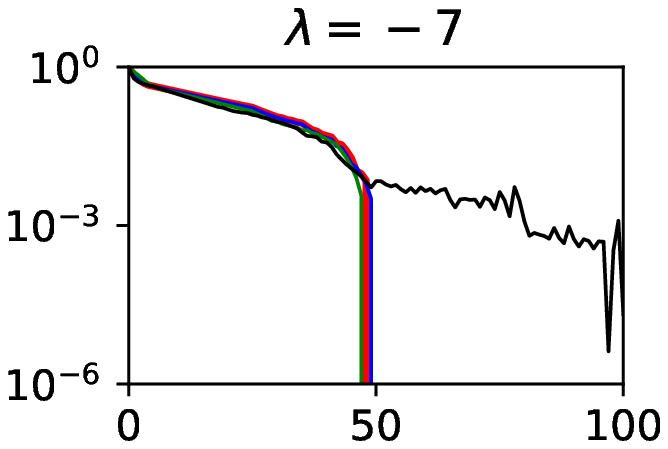}}
\resizebox{0.19\linewidth}{!}{\includegraphics{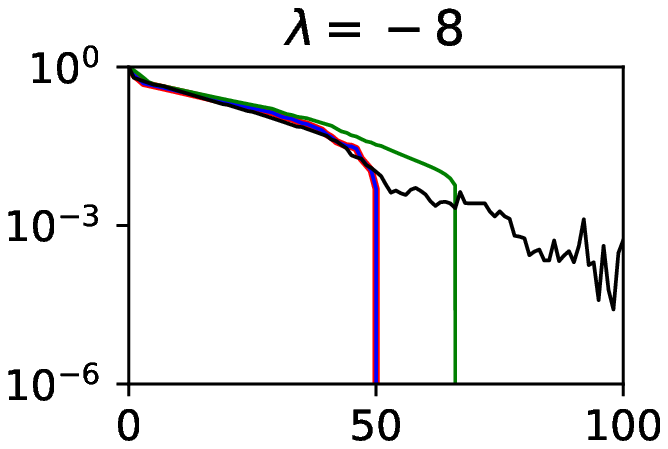}}
\resizebox{0.19\linewidth}{!}{\includegraphics{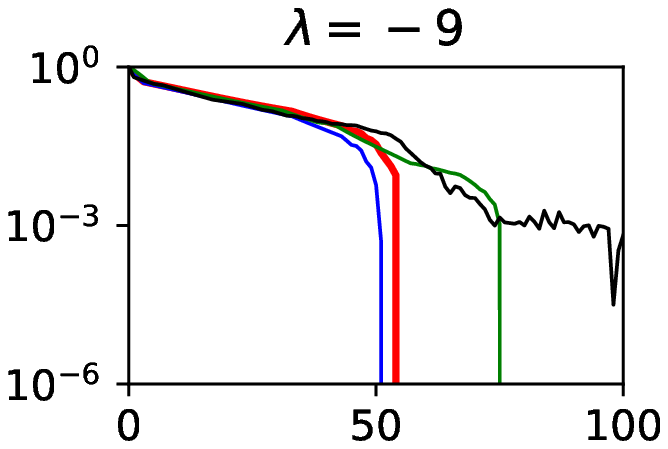}}
\resizebox{0.19\linewidth}{!}{\includegraphics{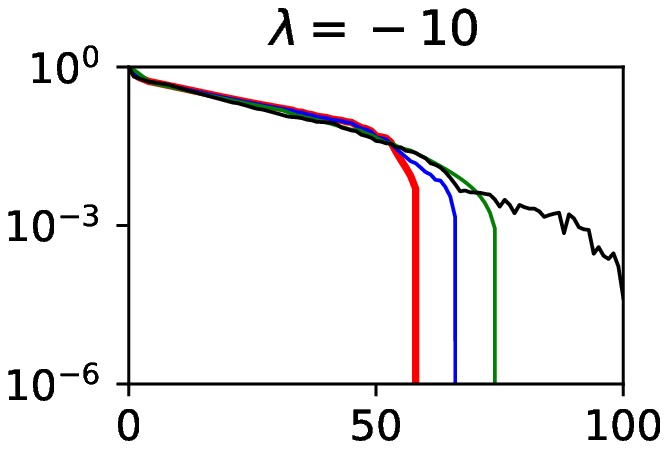}}
\caption{iteration $k$ vs distance $d_{H}(\Sigma^{k},\Sigma_{100})$ \textnormal{(legend: {\color{red}---} (KV)$_{a}$\ {\color{blue}---} (KV)$_{b}$\ {\color{green}---} (KV)$_{c}$\ {\color{black}---}\ (CCBM))}}
\label{fig2D:hd_Lshape}
\end{figure}

%
%
%
\begin{figure}[htp!]
\centering
\resizebox{0.09\linewidth}{!}{\includegraphics{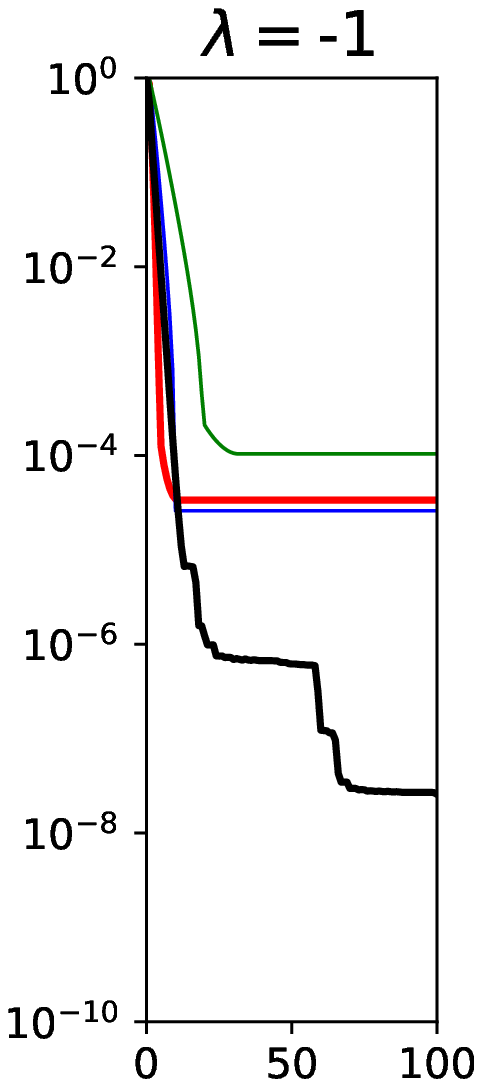}}
\resizebox{0.09\linewidth}{!}{\includegraphics{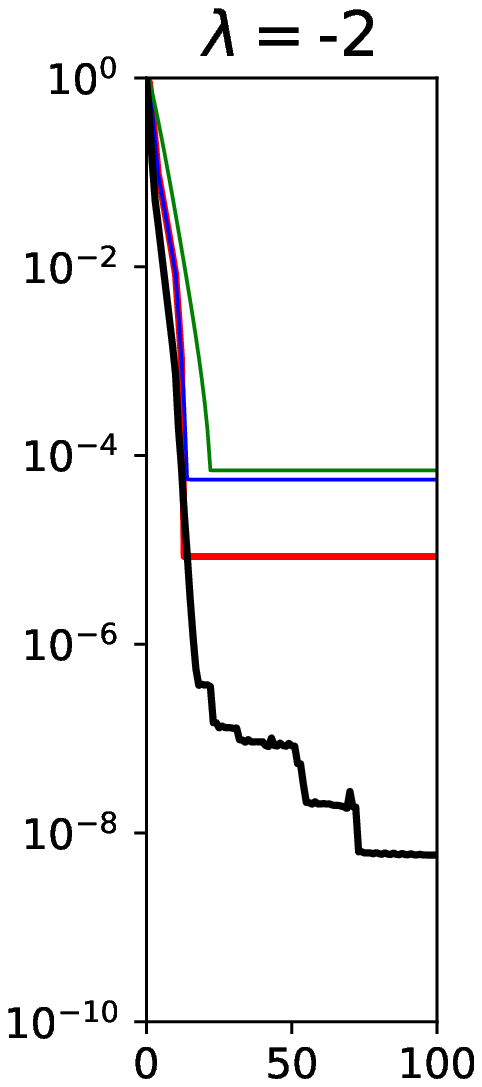}}
\resizebox{0.09\linewidth}{!}{\includegraphics{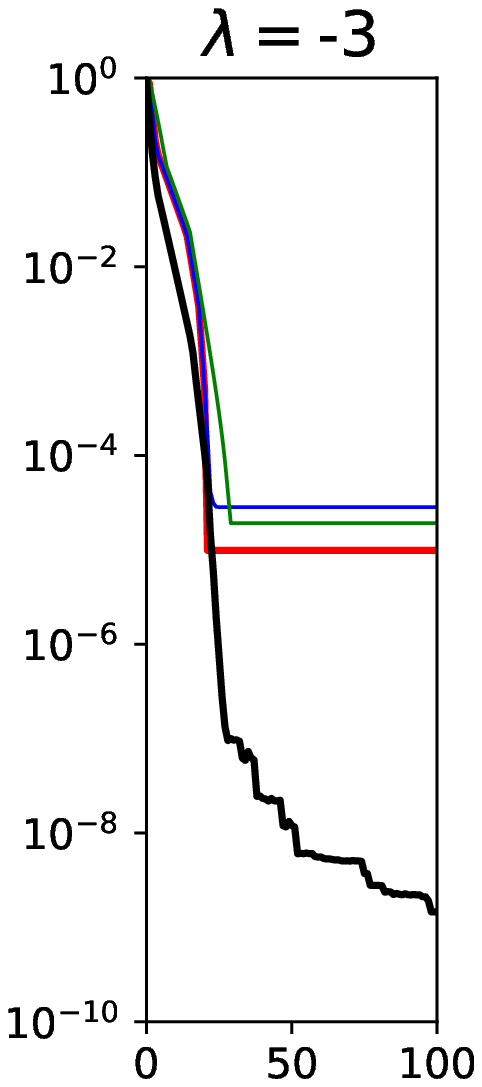}}
\resizebox{0.09\linewidth}{!}{\includegraphics{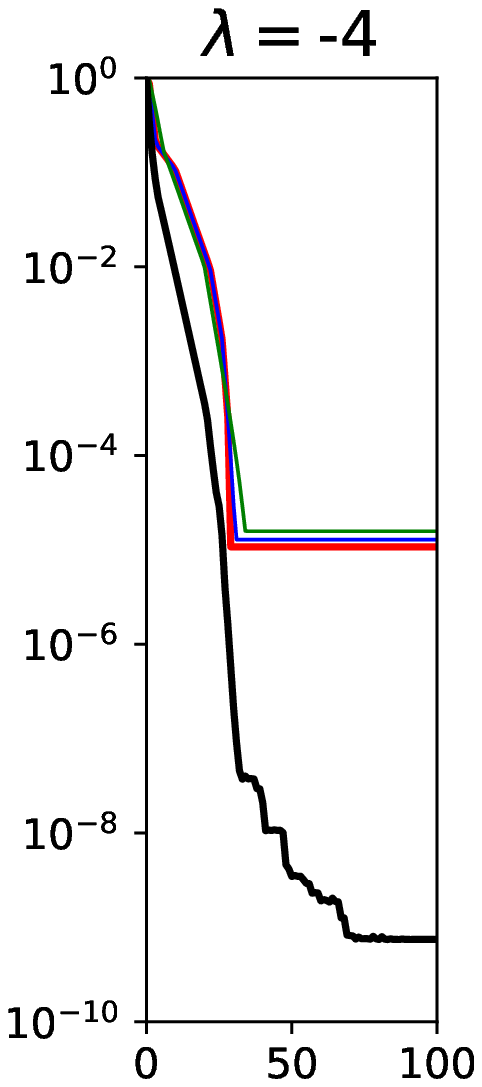}}
\resizebox{0.09\linewidth}{!}{\includegraphics{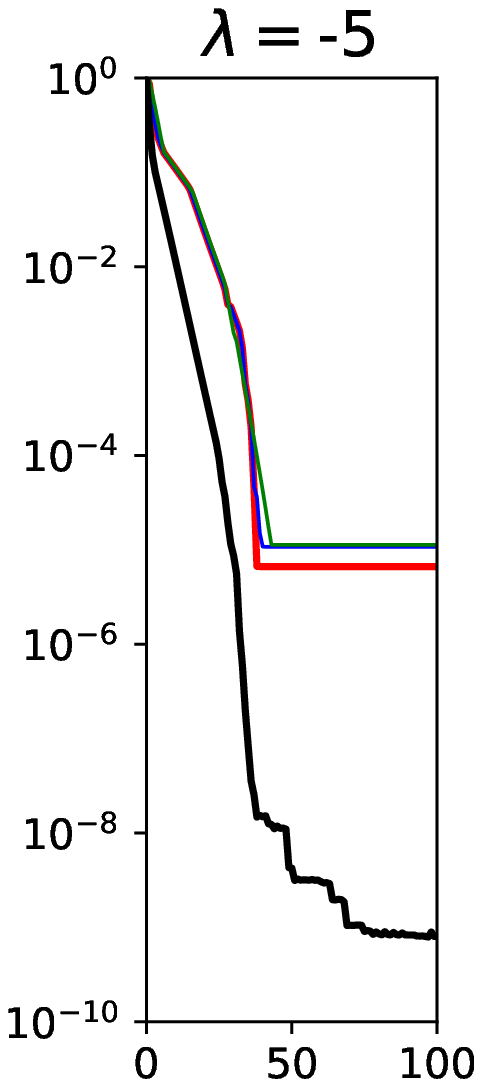}}
\resizebox{0.09\linewidth}{!}{\includegraphics{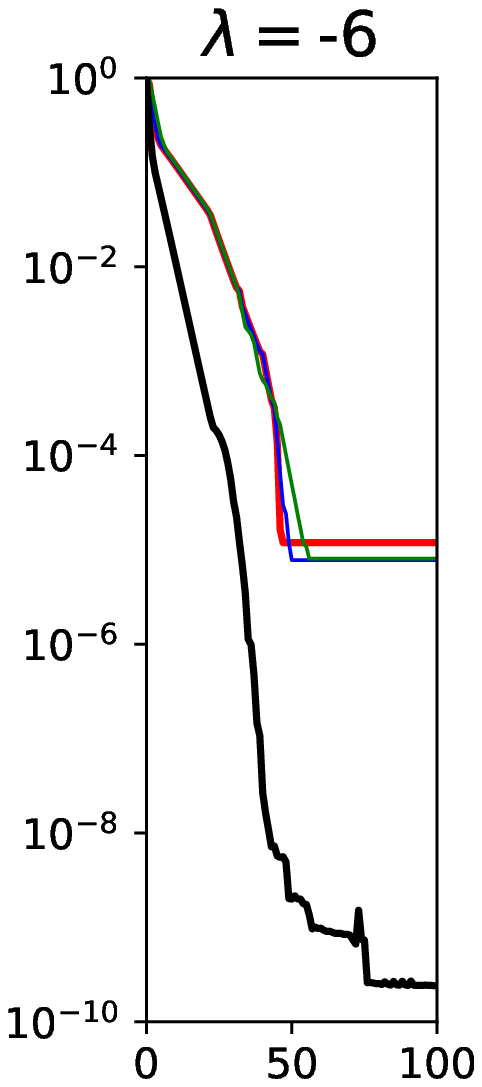}}
\resizebox{0.09\linewidth}{!}{\includegraphics{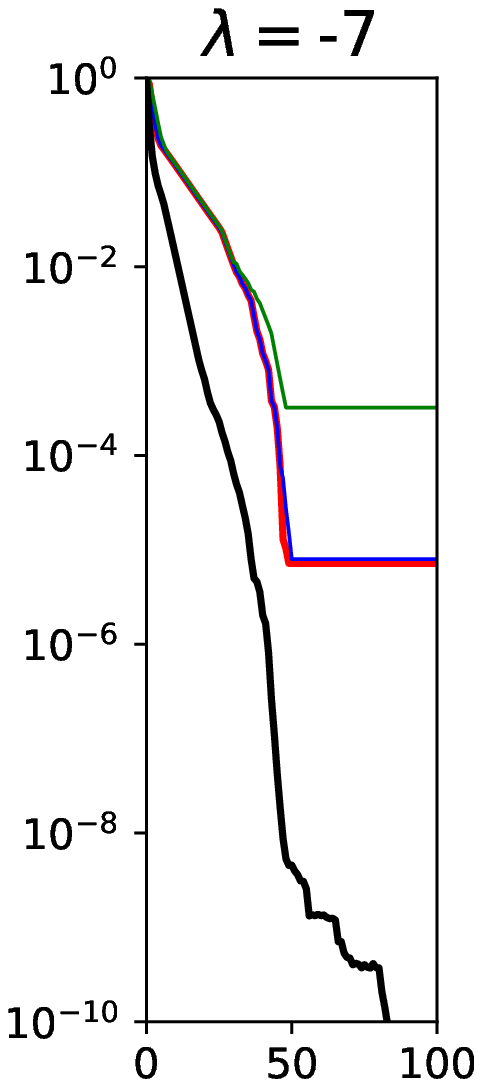}}
\resizebox{0.09\linewidth}{!}{\includegraphics{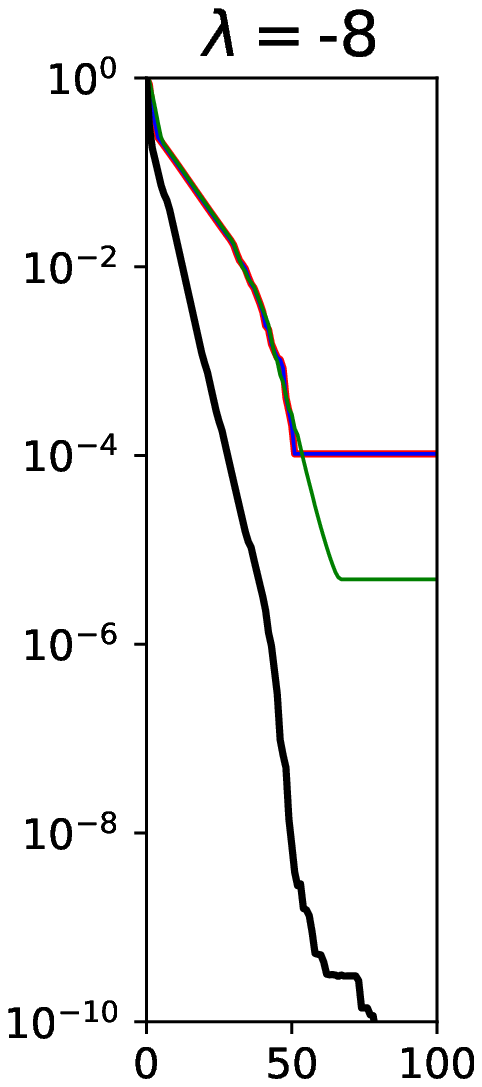}}
\resizebox{0.09\linewidth}{!}{\includegraphics{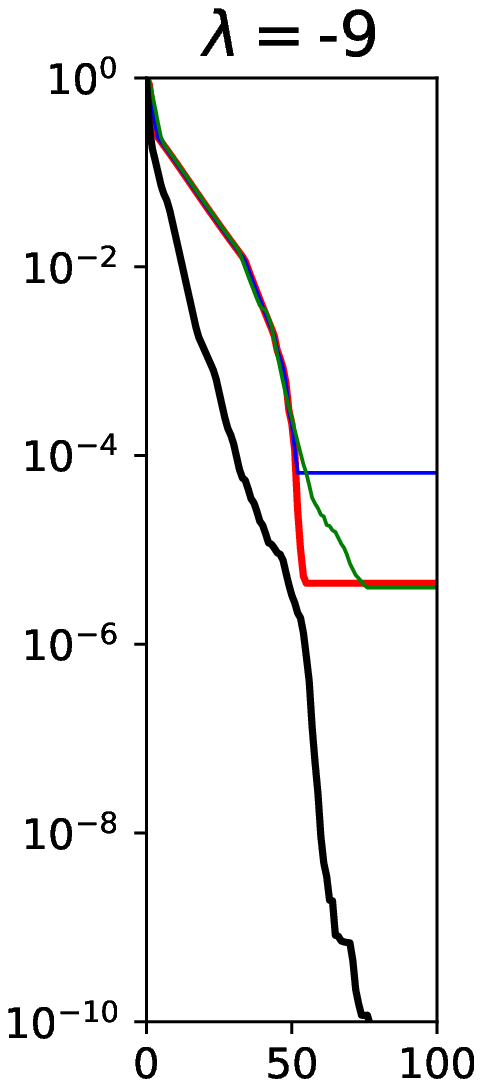}}
\resizebox{0.09\linewidth}{!}{\includegraphics{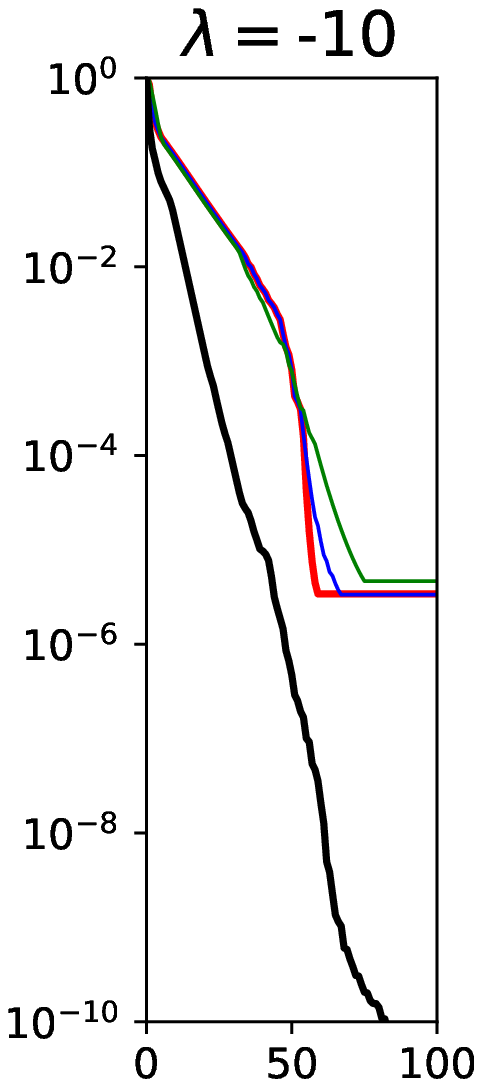}}
\caption{iteration $k$ vs cost values \textnormal{(legend: {\color{red}---} (KV)$_{a}$\ {\color{blue}---} (KV)$_{b}$\ {\color{green}---} (KV)$_{c}$\ {\color{black}---}\ (CCBM))}}
\label{fig2D:J_Lshape}
\end{figure}
%
%
\begin{figure}[htp!]
\centering
\resizebox{0.19\linewidth}{!}{\includegraphics{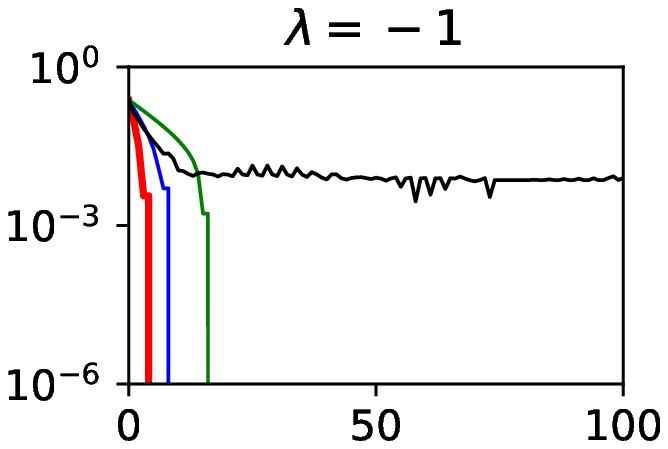}}
\resizebox{0.19\linewidth}{!}{\includegraphics{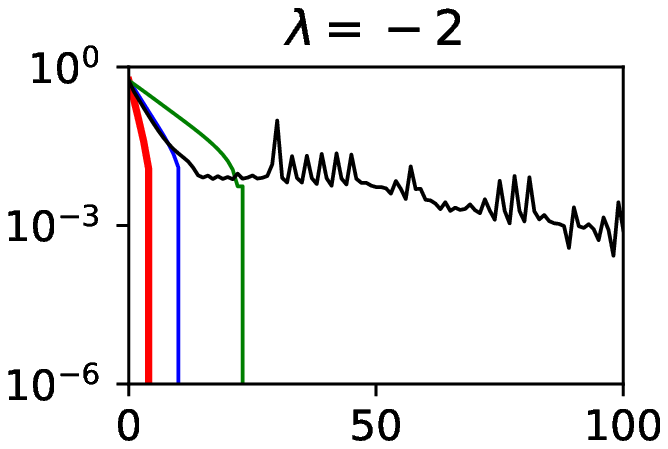}}
\resizebox{0.19\linewidth}{!}{\includegraphics{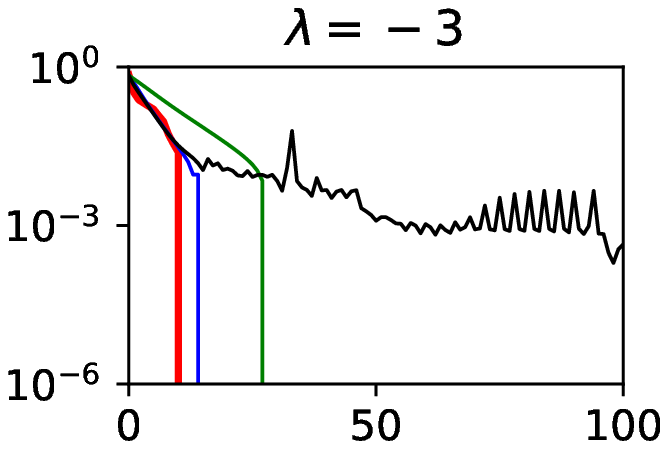}}
\resizebox{0.19\linewidth}{!}{\includegraphics{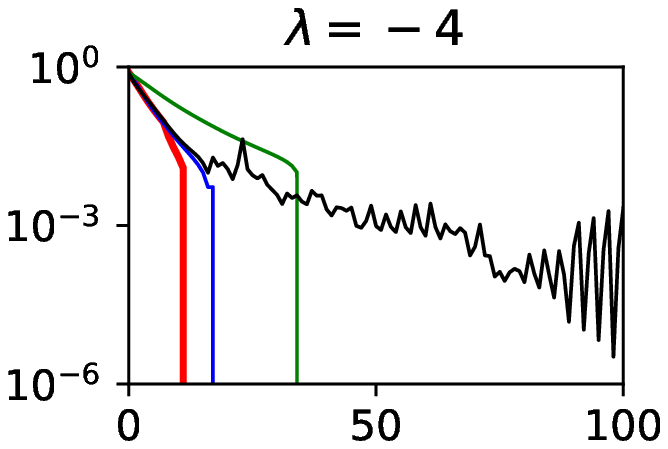}}
\resizebox{0.19\linewidth}{!}{\includegraphics{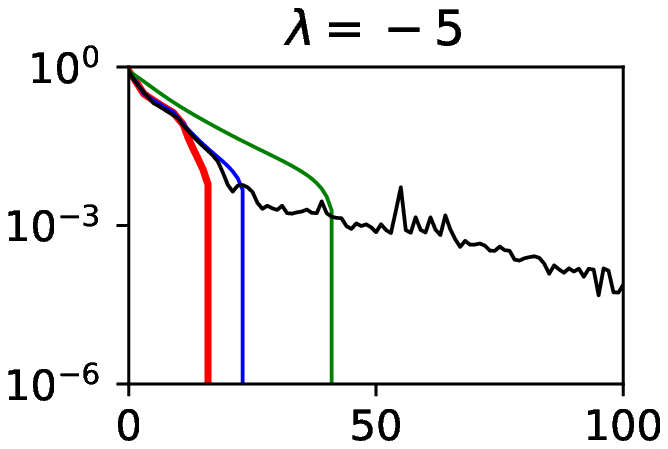}}\\
\resizebox{0.19\linewidth}{!}{\includegraphics{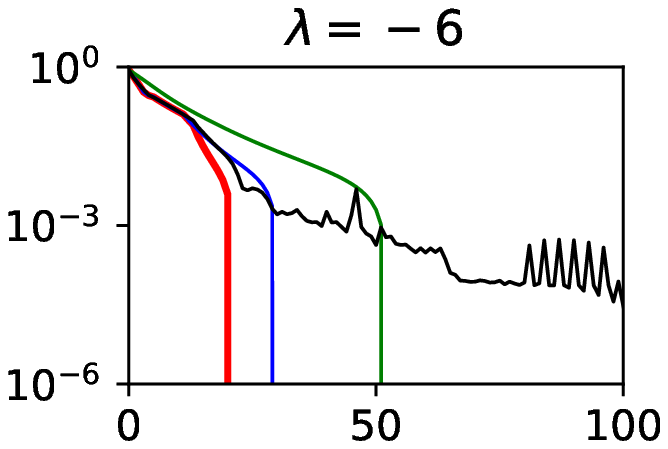}}
\resizebox{0.19\linewidth}{!}{\includegraphics{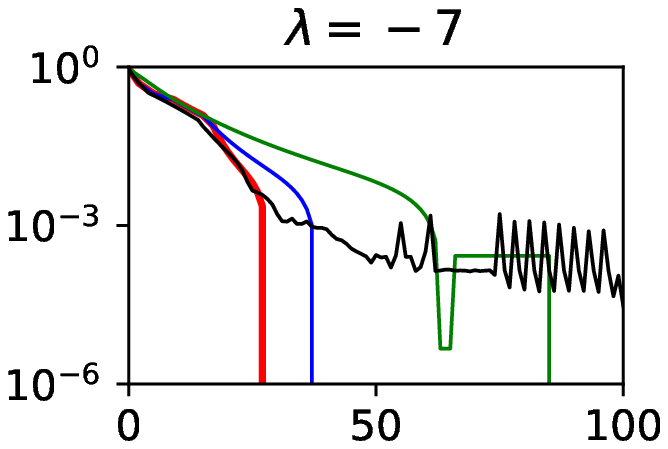}}
\resizebox{0.19\linewidth}{!}{\includegraphics{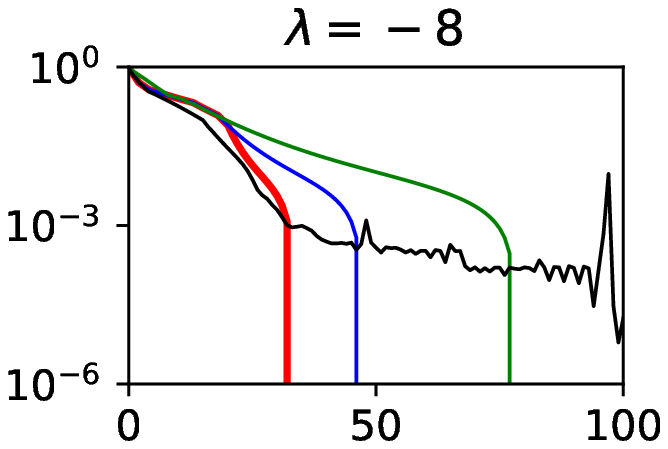}}
\resizebox{0.19\linewidth}{!}{\includegraphics{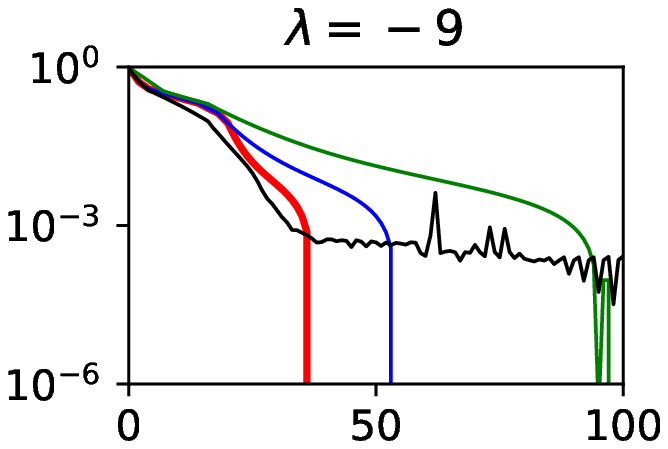}}
\resizebox{0.19\linewidth}{!}{\includegraphics{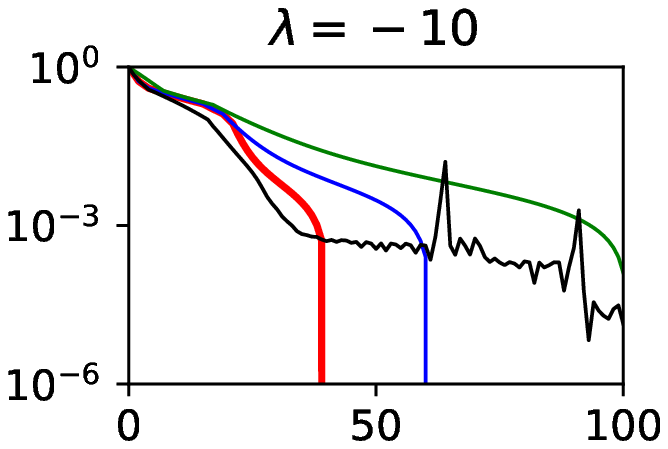}}
\caption{iteration $k$ vs distance $d_{H}(\Sigma^{k},\Sigma_{100})$ \textnormal{(legend: {\color{red}---} (KV)$_{a}$\ {\color{blue}---} (KV)$_{b}$\ {\color{green}---} (KV)$_{c}$\ {\color{black}---}\ (CCBM))}}
\label{fig2D:hd_Rshape}
\end{figure}
%
%
\begin{figure}[htp!]
\centering
\resizebox{0.09\linewidth}{!}{\includegraphics{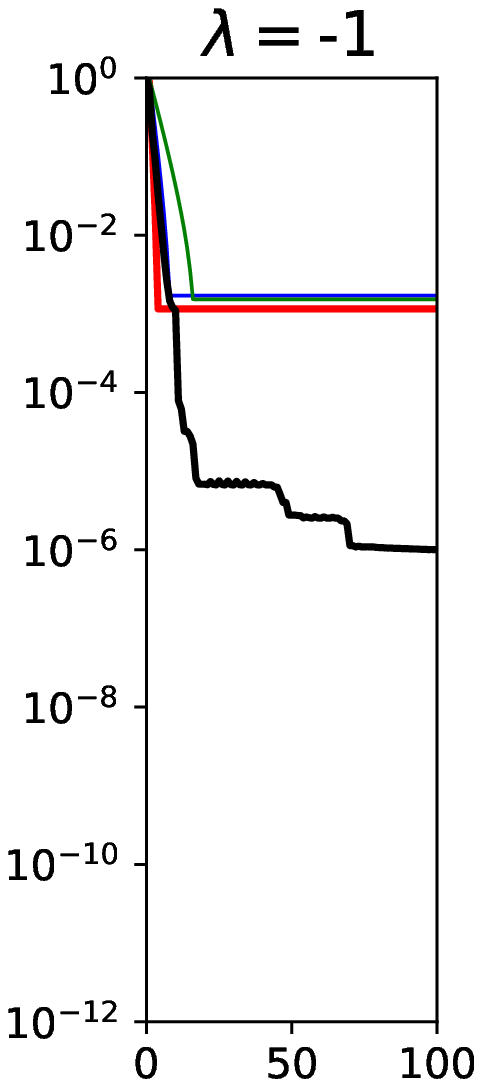}}
\resizebox{0.09\linewidth}{!}{\includegraphics{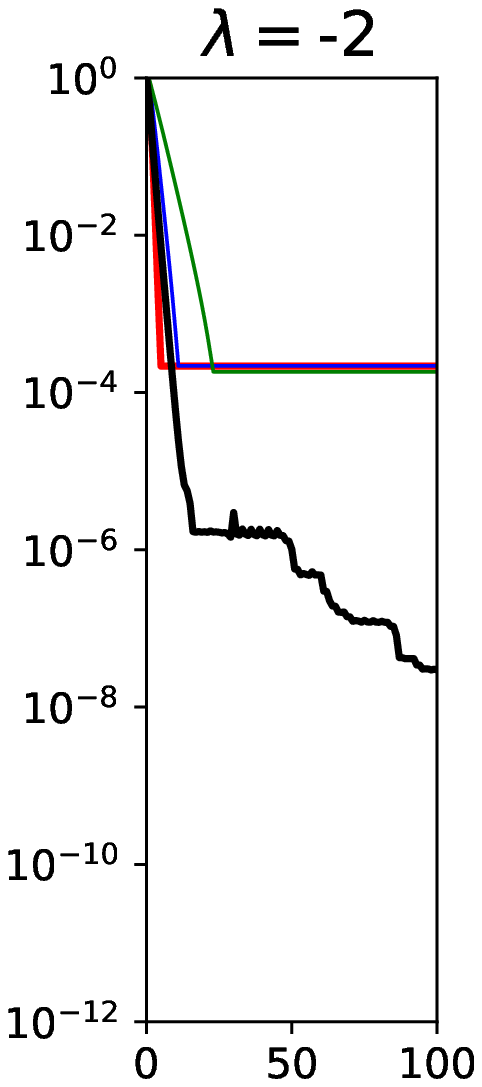}}
\resizebox{0.09\linewidth}{!}{\includegraphics{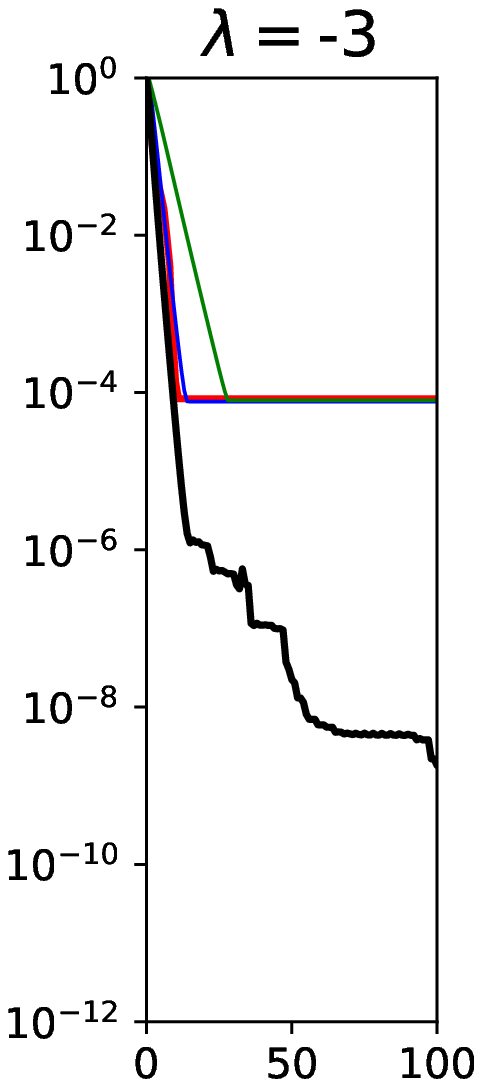}}
\resizebox{0.09\linewidth}{!}{\includegraphics{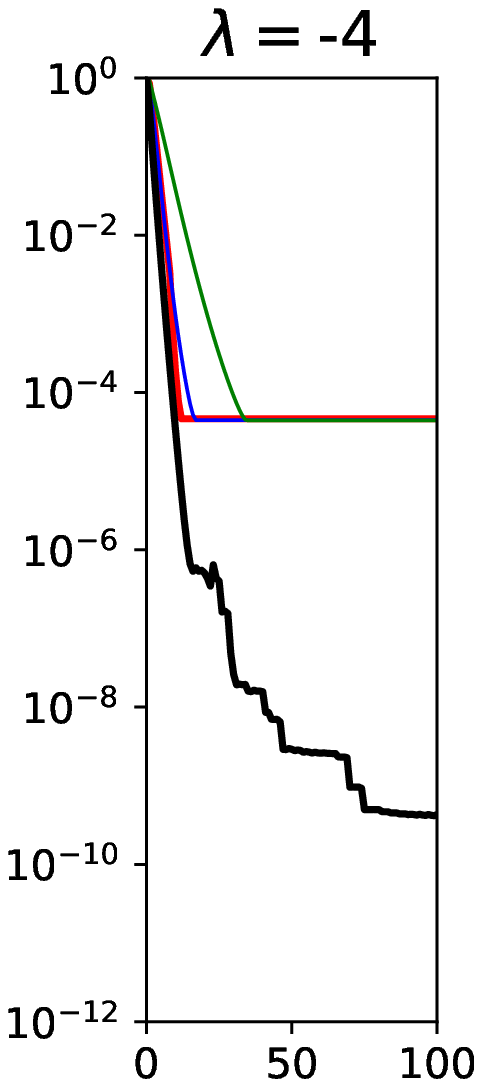}}
\resizebox{0.09\linewidth}{!}{\includegraphics{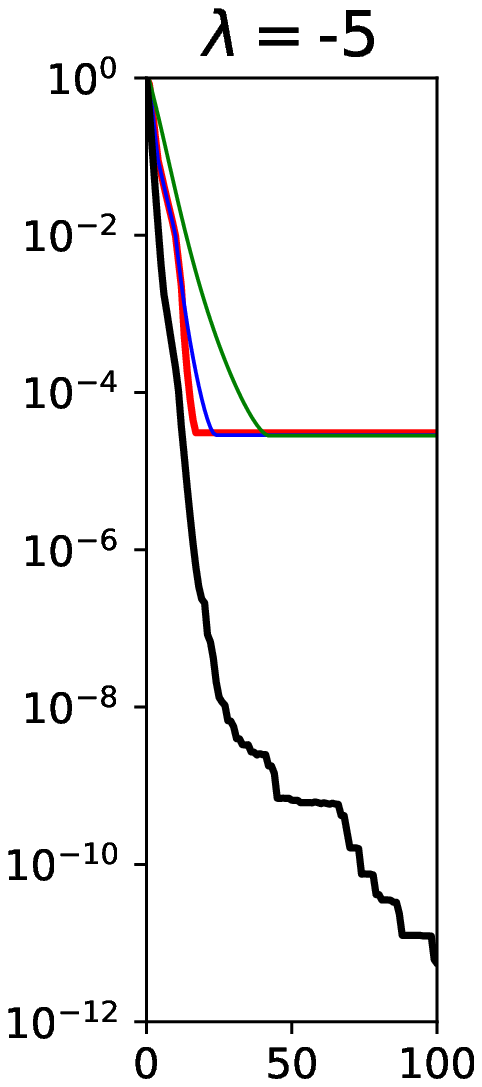}}
\resizebox{0.09\linewidth}{!}{\includegraphics{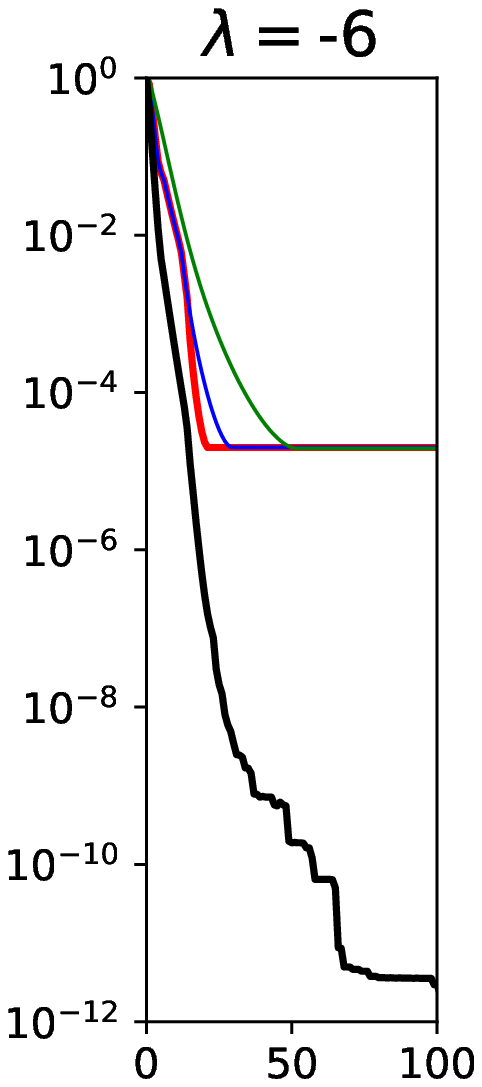}}
\resizebox{0.09\linewidth}{!}{\includegraphics{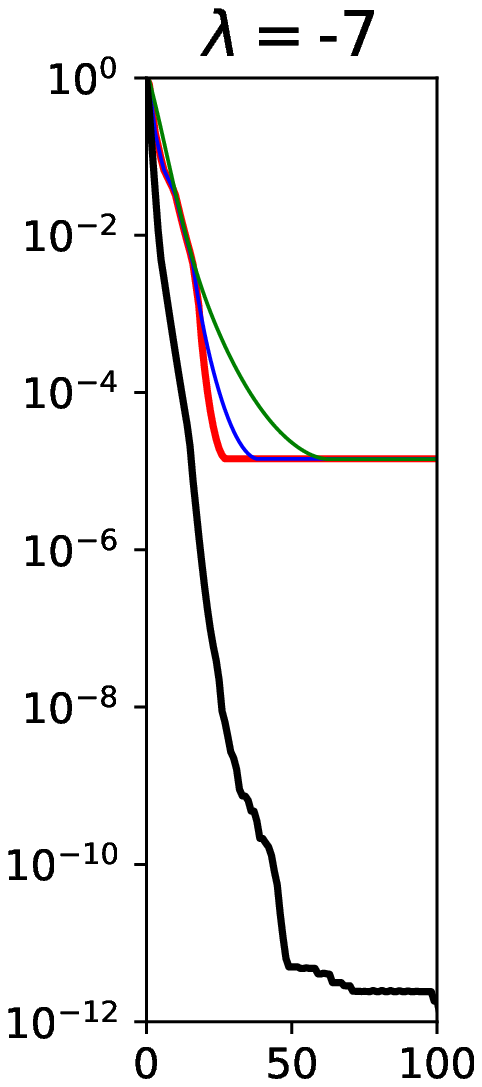}}
\resizebox{0.09\linewidth}{!}{\includegraphics{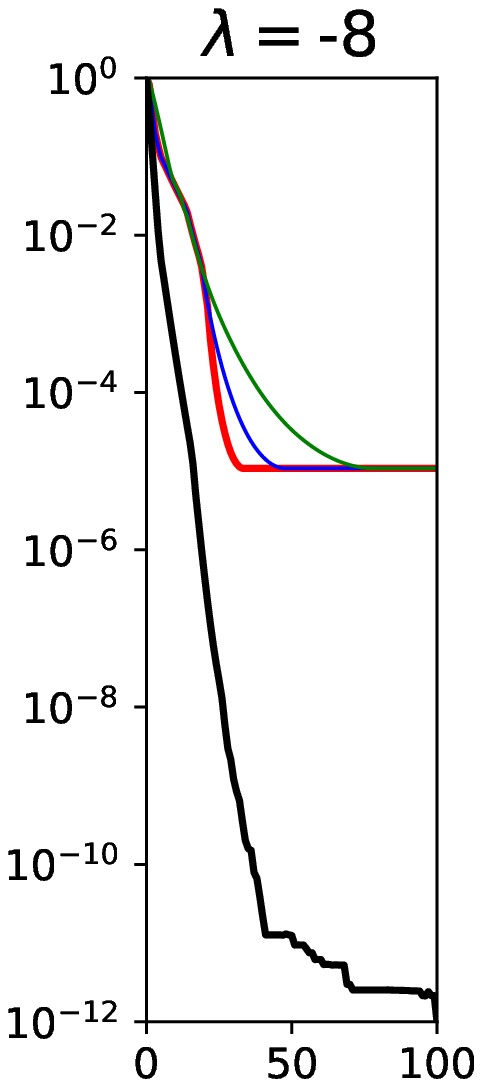}}
\resizebox{0.09\linewidth}{!}{\includegraphics{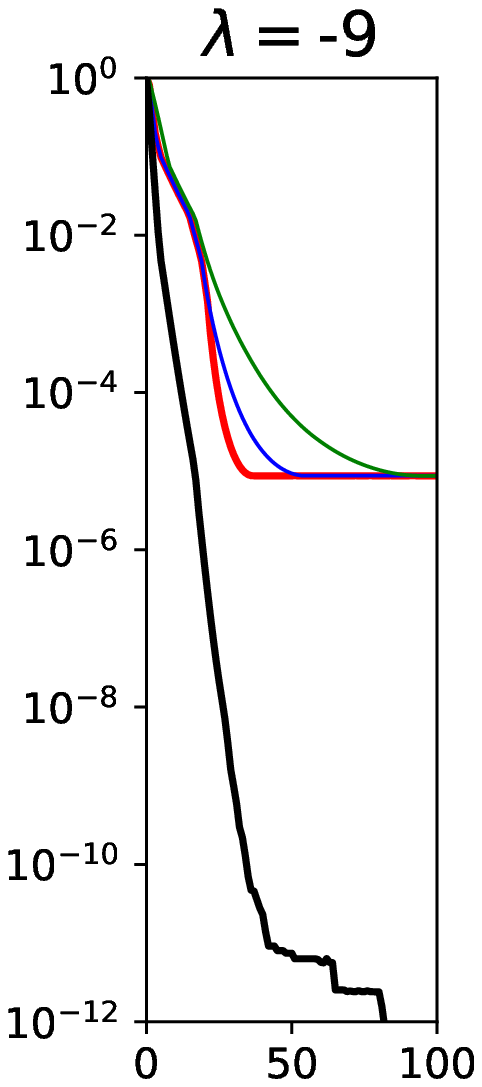}}
\resizebox{0.09\linewidth}{!}{\includegraphics{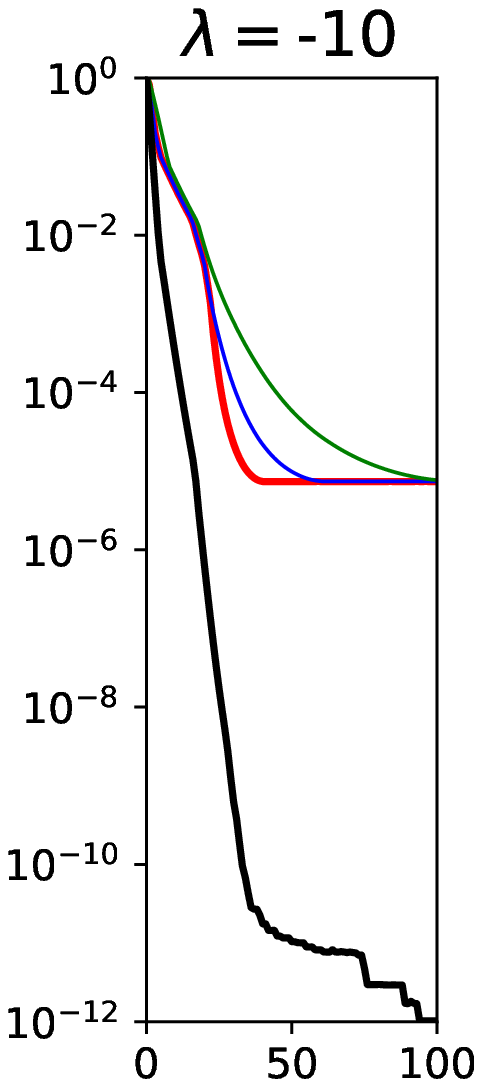}}
\caption{iteration $k$ vs cost values \textnormal{(legend: {\color{red}---} (KV)$_{a}$\ {\color{blue}---} (KV)$_{b}$\ {\color{green}---} (KV)$_{c}$\ {\color{black}---}\ (CCBM))}}
\label{fig2D:J_Rshape}
\end{figure}

\begin{figure}[htp!]
\centering
\resizebox{0.32\linewidth}{!}{\includegraphics{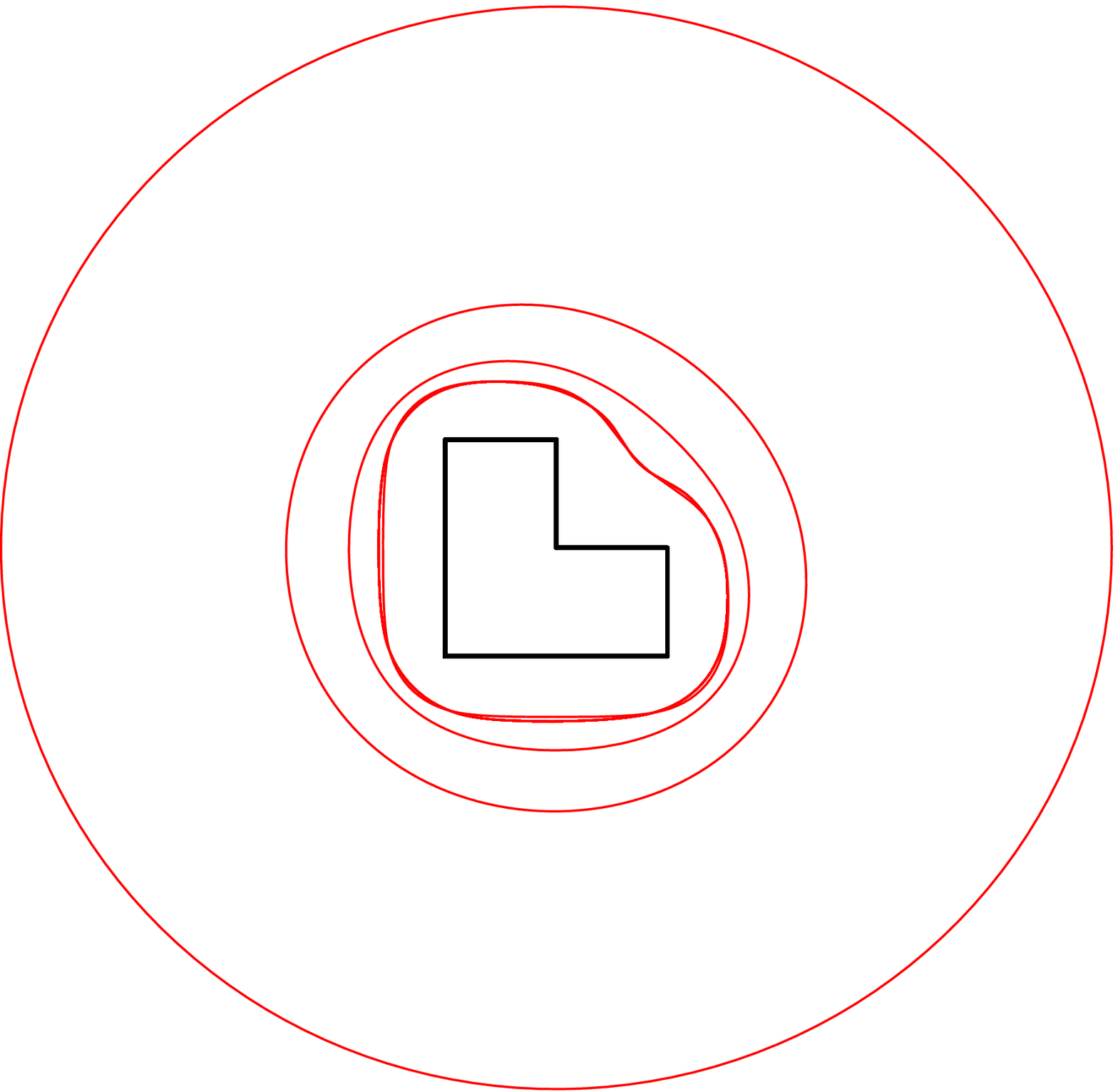}}\hfill
\resizebox{0.32\linewidth}{!}{\includegraphics{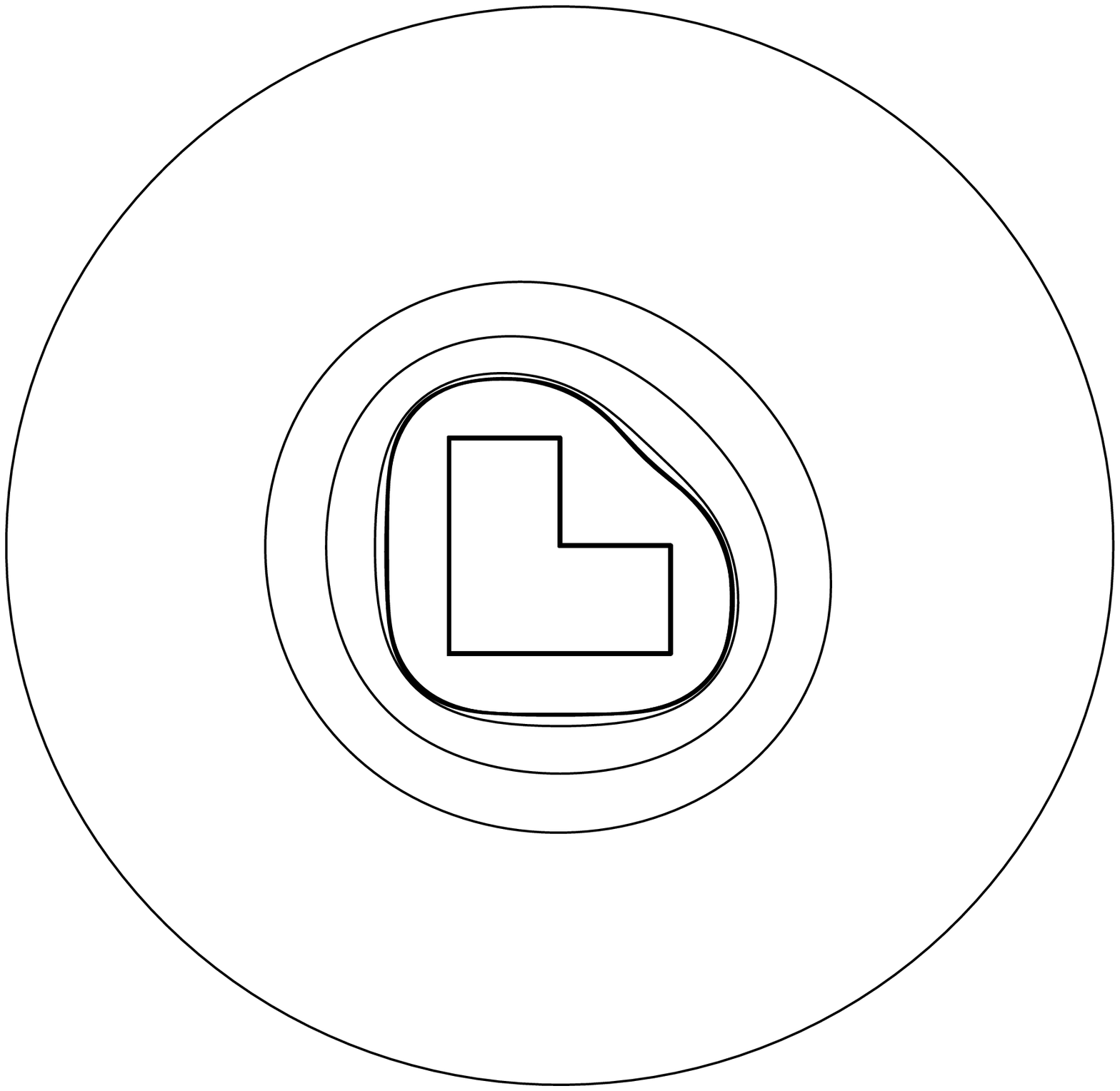}}\hfill
\resizebox{0.31\linewidth}{!}{\includegraphics{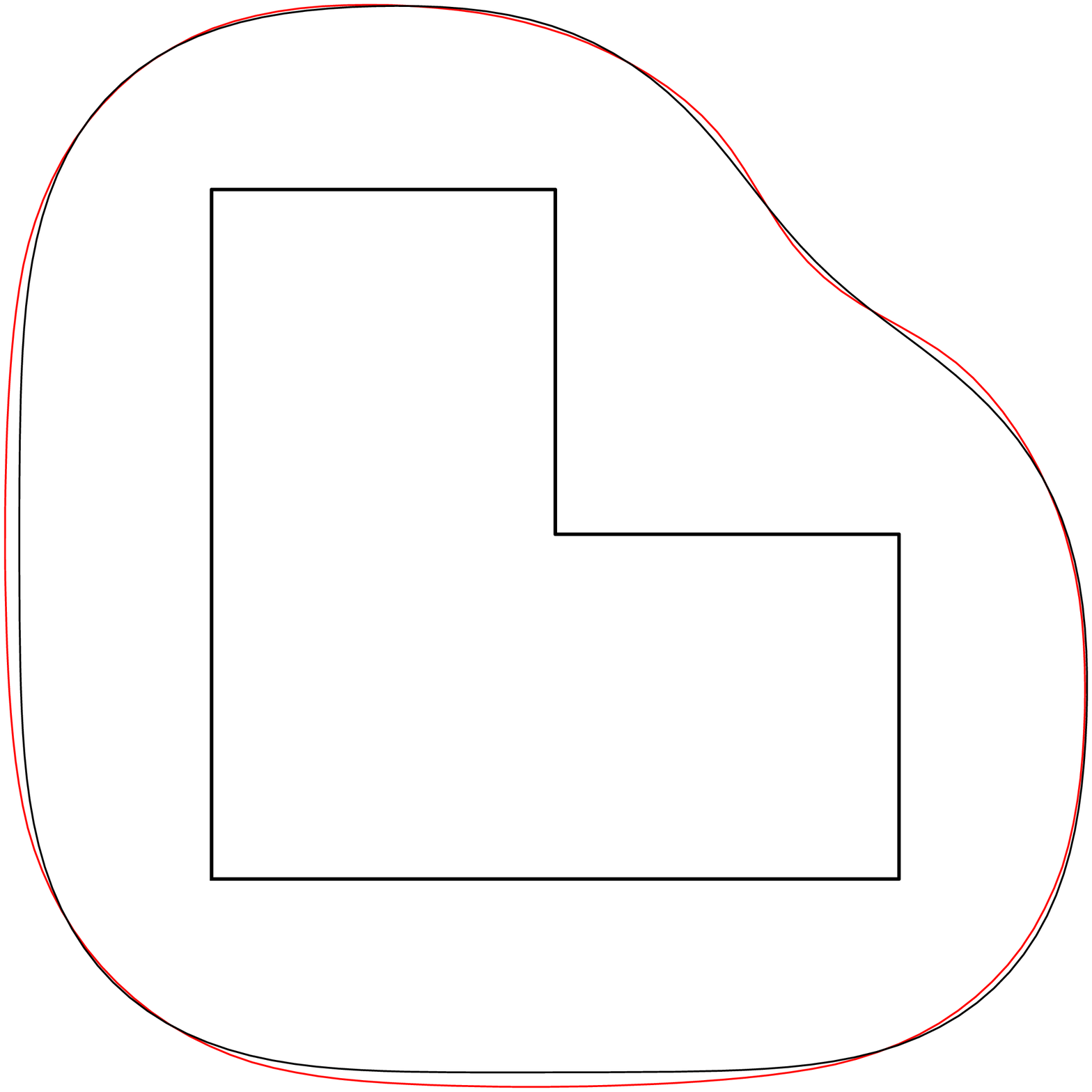}}
\caption{Evolution of shapes (plotted at every ten iterates and were obtained with step size parameter value $\mu = 2.0$) for Example \ref{example2d2} when $\lambda = -7$ using KVM (left-most plot) and CCBM (middle plot), and a direct comparison with the computed optimal shapes (right-most plot)}
\label{fig2D_illustration}
\end{figure}

\begin{figure}[htp!]
\centering
\resizebox{0.48\linewidth}{!}{\includegraphics{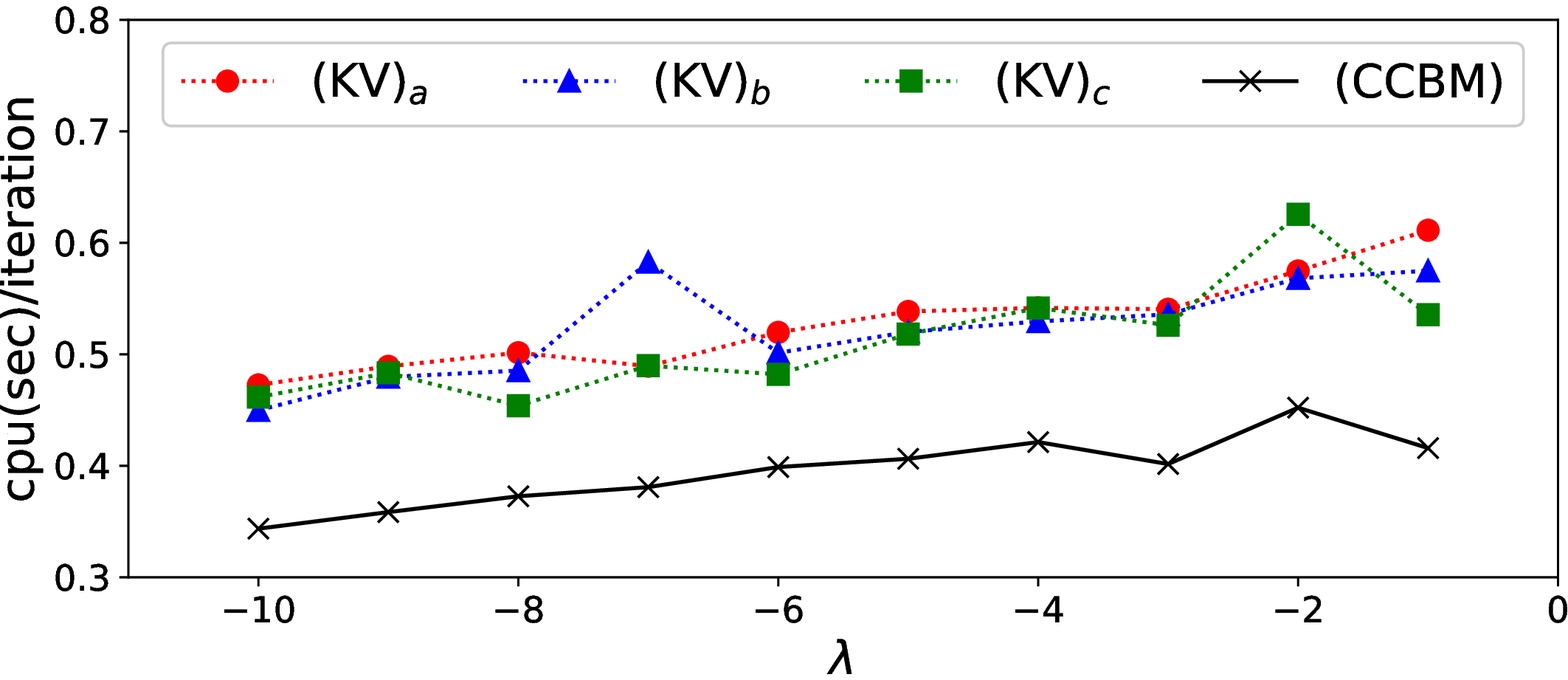}}\quad
\resizebox{0.48\linewidth}{!}{\includegraphics{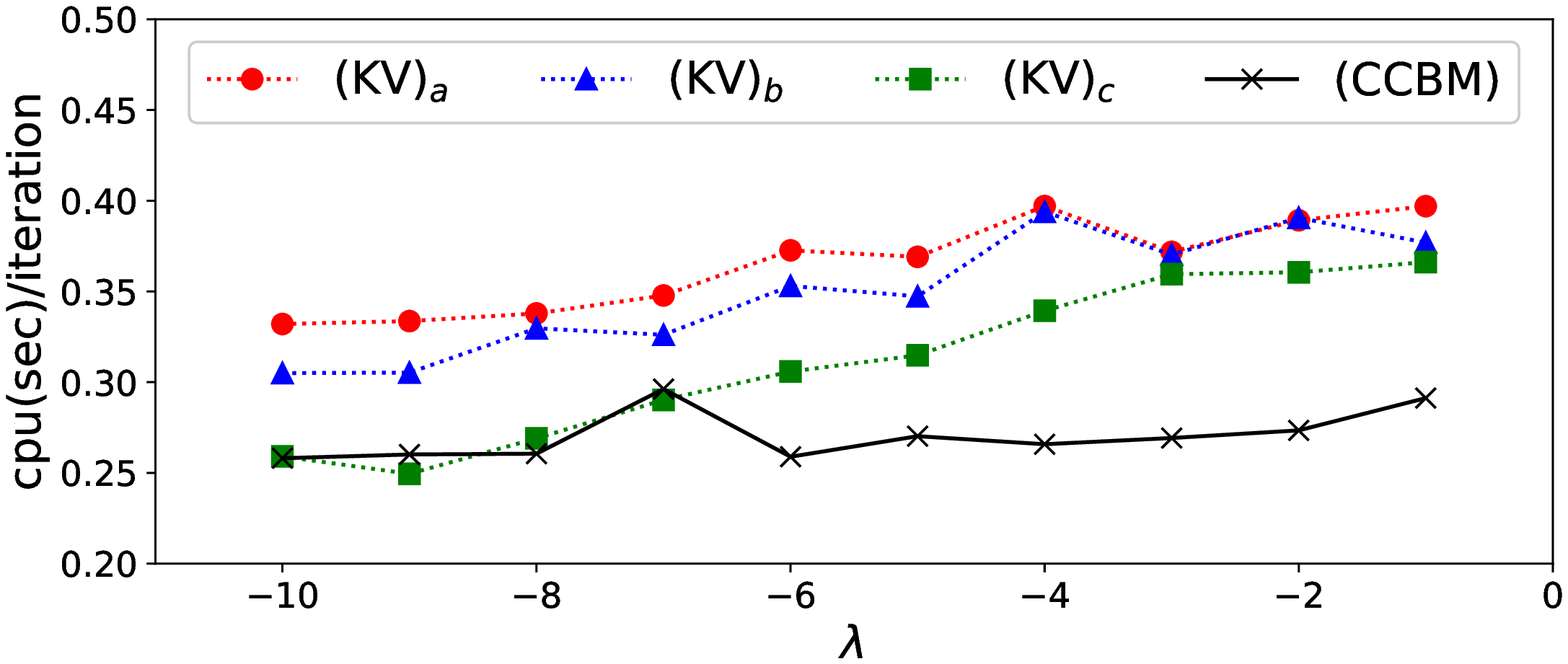}} 
\caption{$\lambda$ vs computational-time-per-iteration for Example \ref{example2d2} (left) and Example \ref{example2d3} (right)}
\label{fig2D:timeGraphs}
\end{figure}
The rest of our examples will focus on three dimensional cases using CCBM.
\begin{example}[Axisymmetric case in 3D]\label{example3d1}
Let us first test the method to a simple axisymmetric 3D-case with an analytical solution.
On this purpose, we consider the spheres $\mathcal{S}(\vect{0},r):=\{\xi \in \mathbb{R}^3 : |\xi| = r\}$ and $\mathcal{S}(\vect{0},R):= \{\xi \in \mathbb{R}^3 : |\xi| = R\}$ centered at $\vect{0}$ with radius $r > 0$ and $R > r$, respectively.
With $u(r) = 1$ and $u(R) = 0$, the solution to the Dirichlet problem \eqref{eq:state_ud} is exactly given by $u(\rho) = r (R - \rho) / [\rho (R-r)] $, $\rho \in (r,R)$ with normal derivative $\partial_{\rho} u(\rho)  =  -  Rr/[\rho^2(R-r)]$.
So, on the exterior surface, we have $\partial_{\rho} u(R)  = -r/[R(R-r)] =: \lambda$.
Thus, problem \eqref{eq:Bernoulli_problem} with $\Gamma =  \mathcal{S}(\vect{0},r)$ and $\lambda = -r/[R(R-r)] $, $0 < r < R$, has the unique exact free boundary solution $\Sigma^\ast = \mathcal{S}(\vect{0},R^\ast)$. 
For a concrete example, we let $r=0.3$ and $R^\ast=0.5$, giving us $\lambda = -3$, and take $\Sigma^{0}=\mathcal{S}(\vect{0},0.6)$ as the initial guess.
With $\texttt{Tol} = 10^{-6}$, initial maximum mesh size $h_{\max} = 0.1$ on the surfaces, and maximum volume $0.001$ for the tetrahedra, the procedure is completed after $53$ sec.
The nodes on $\Sigma_{f}$ have mean radii of $\bar{R} = 0.4787$ which gives a $4.26\%$ error with respect to the exact radius $R^\ast = 0.5$.
%
%
%
\end{example}
In the next three examples, the algorithm is terminated after 600 iterations in addition to setting $\texttt{Tol} = 10^{-8}$ in the stopping condition (see Remark \ref{rem:stopping_condition}).
\begin{example}[Perturbed sphere]\label{example3d2}
Let us define $\Gamma$ as the surface of a perturbed sphere having strict concave regions.
With $\Sigma^{0} = \mathcal{S}(\vect{0},1.5)$ as the initial guess, the results of the computation are summarized in Figure \ref{fig3D} (first row), including the plots for histories of cost values and Sobolev gradient norms.
\end{example}
\begin{example}[Torus]\label{example3d3}
We also look at the case where the fixed surface $\Gamma$ is given by a torus and set $\Sigma^{0} = \mathcal{S}(\vect{0},0.8)$ as our initial guess.
The computational results for this example are shown in Figure \ref{fig3D} (second row).
\end{example}
\begin{example}[Four disjoint spheres]\label{example3d4}
Let us also consider the case where $\Gamma$ is the union of four disjoints spheres having the same exact radius $r = 0.25$, and let $\Sigma^{0} = \mathcal{S}(\vect{0},0.9)$.
The results for this test case are summarized in Figure \ref{fig3D} (third row).
\end{example}
A summary of mesh details (number of boundary elements, triangles, and vertices) used in the last three examples are tabulated in Table \ref{table}.
The computed cost at final iterate and over-all cpu times for the three examples are also shown in the table.
%
%
\begin{figure}[hp!]
\centering
\resizebox{0.66\linewidth}{!}{\includegraphics{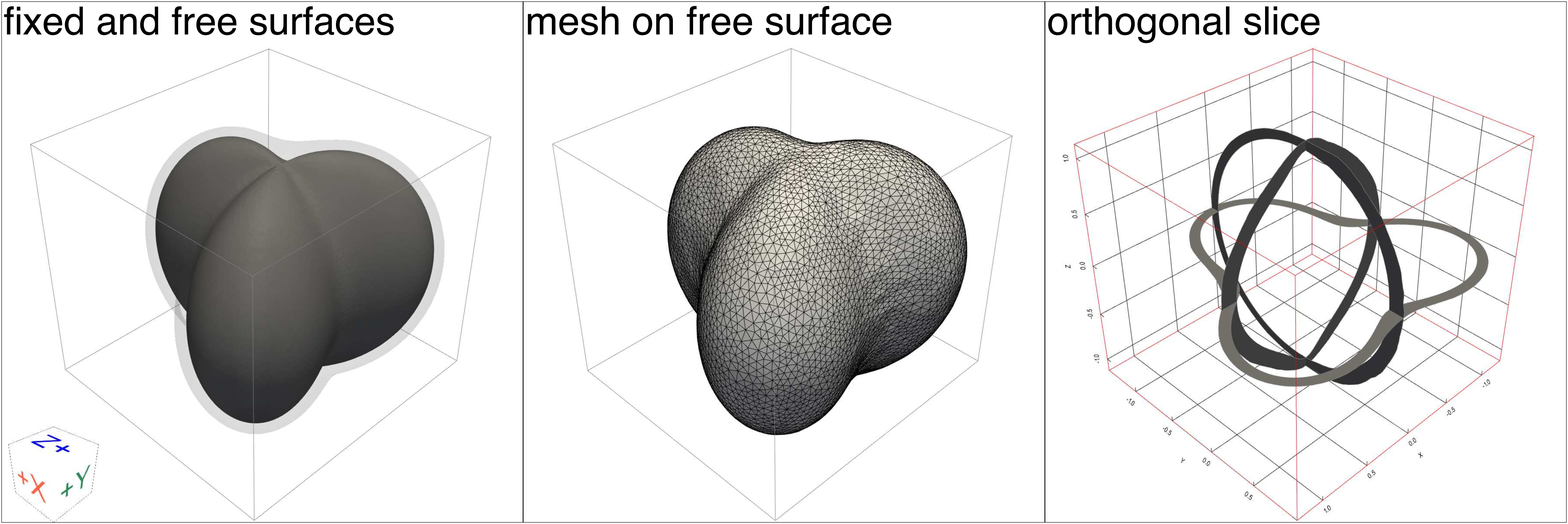}} \resizebox{0.24\linewidth}{!}{\includegraphics{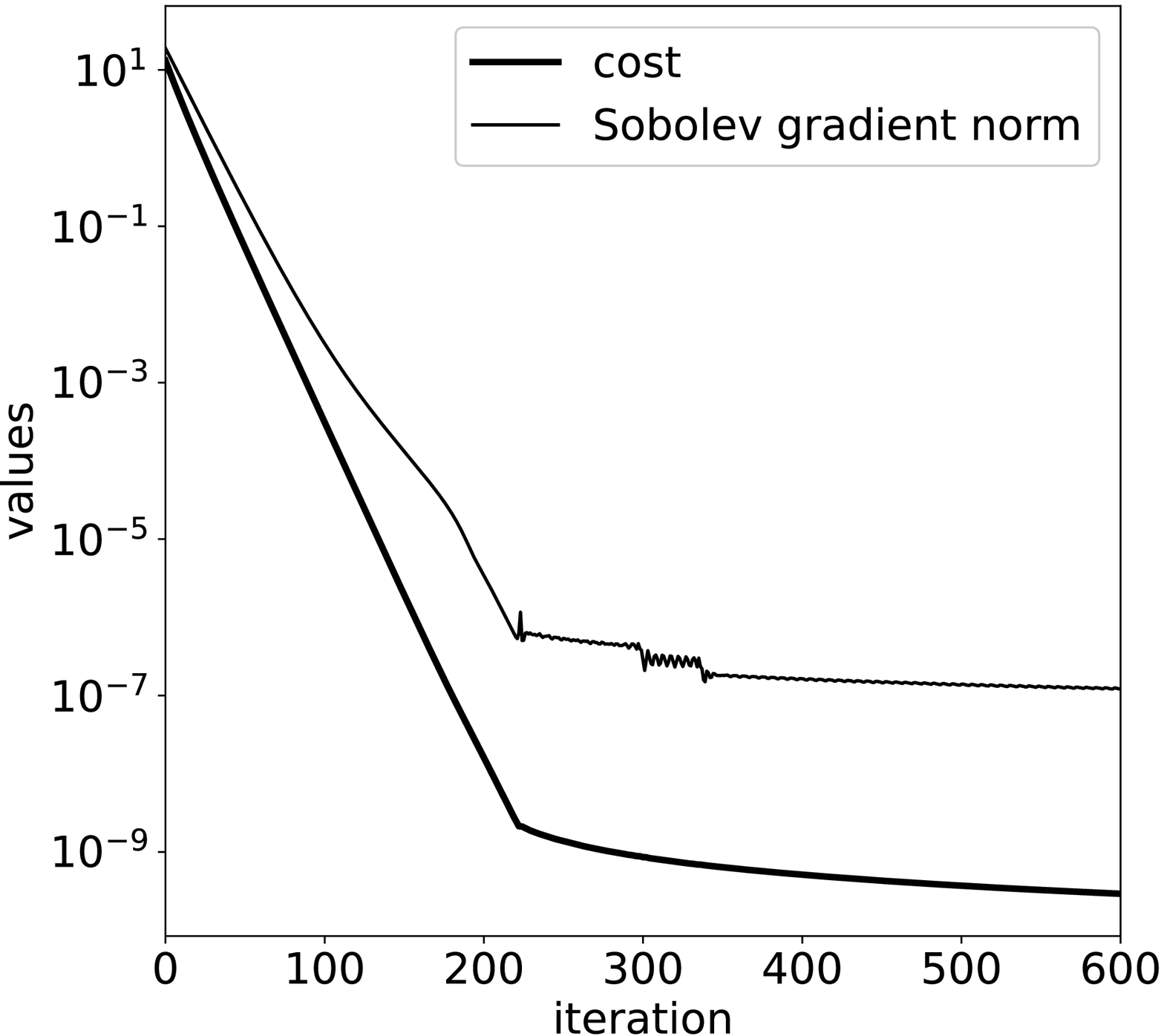}}\\
\resizebox{0.66\linewidth}{!}{\includegraphics{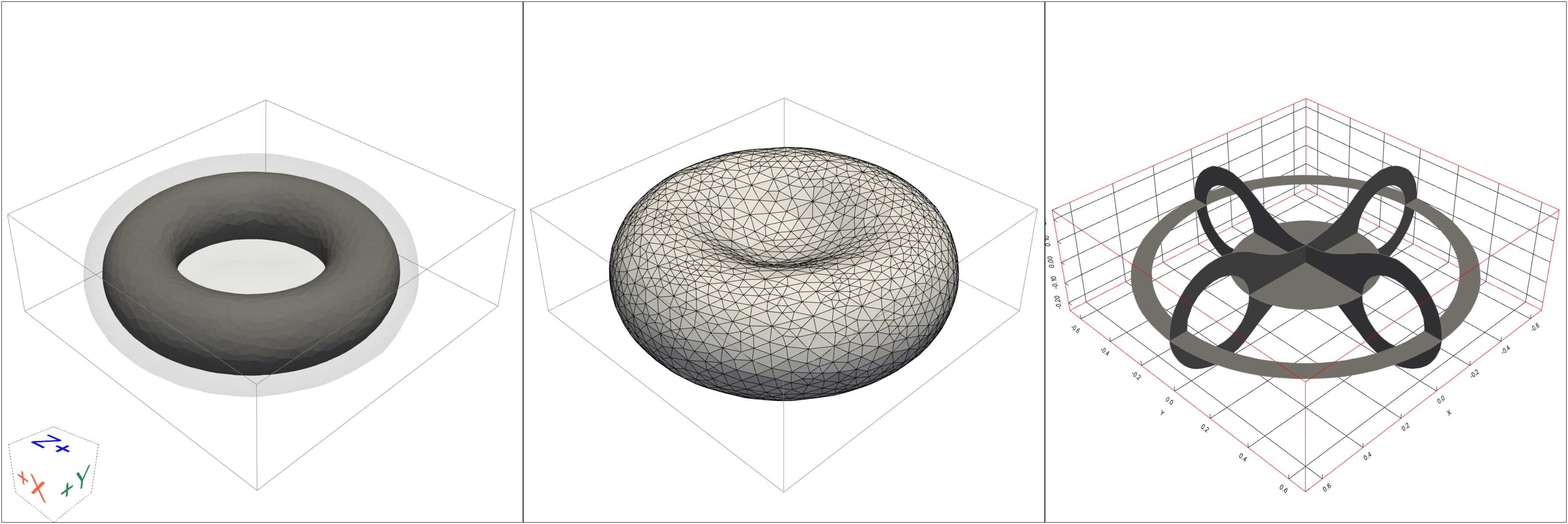}} \resizebox{0.24\linewidth}{!}{\includegraphics{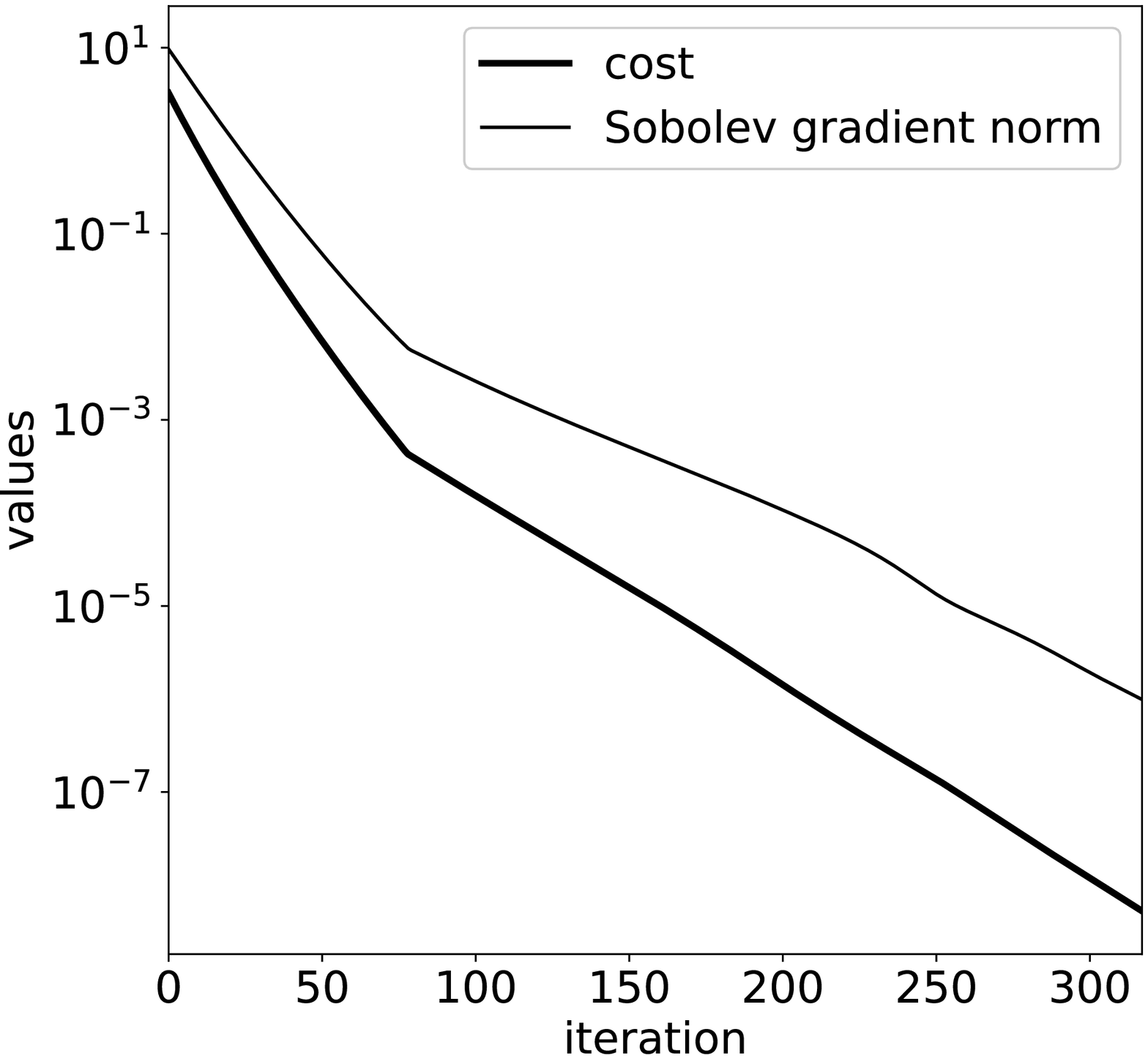}}\\
\resizebox{0.66\linewidth}{!}{\includegraphics{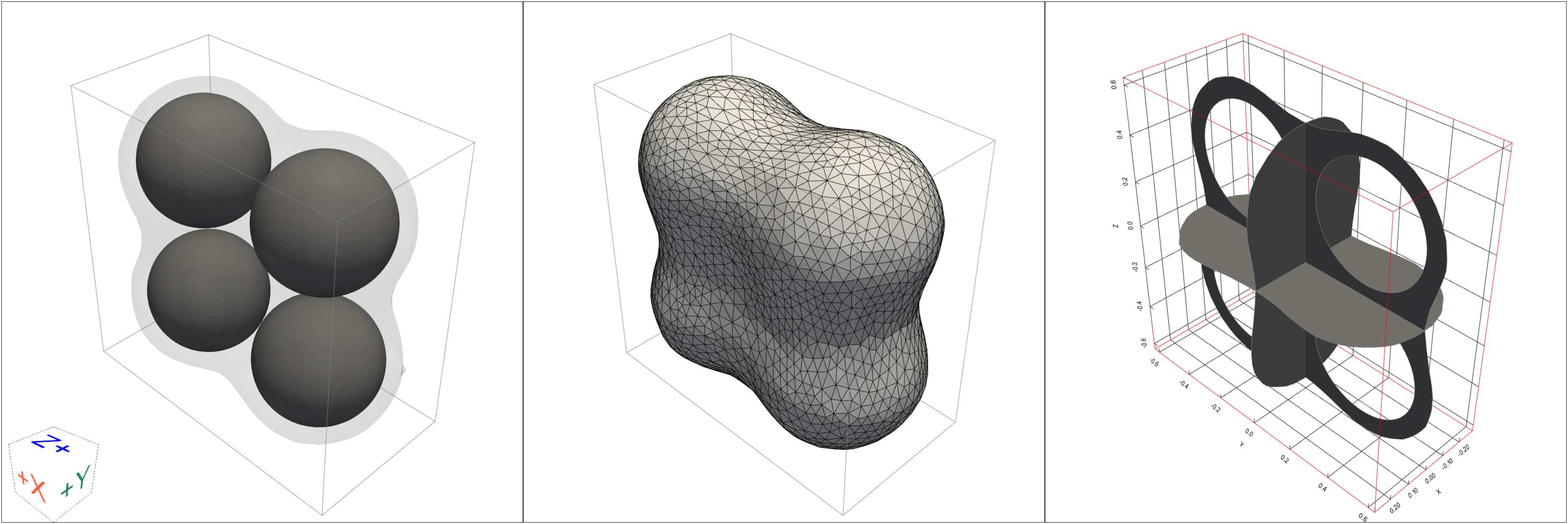}} \resizebox{0.24\linewidth}{!}{\includegraphics{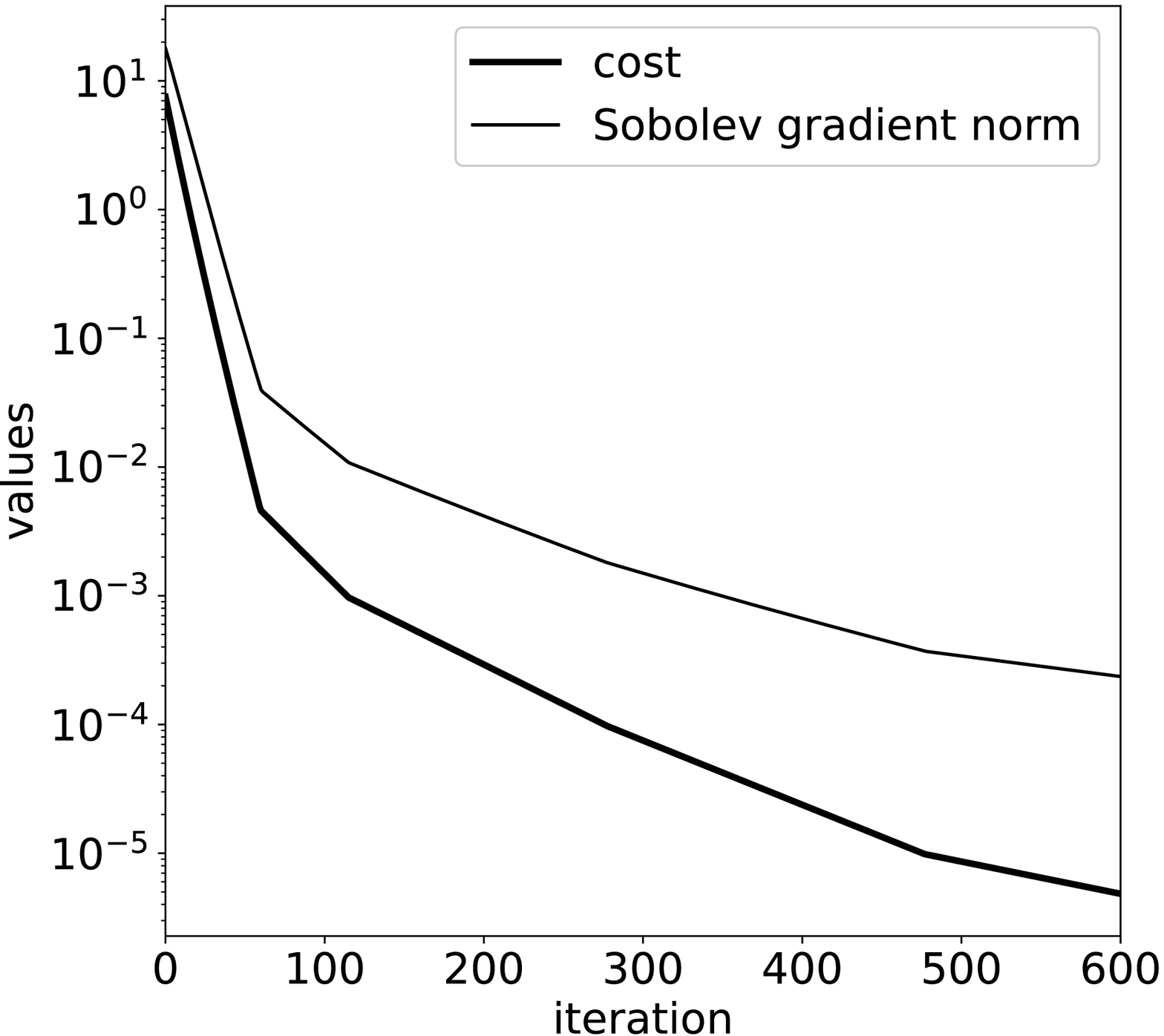}}
\caption{Computational results for Examples \ref{example3d2}--\ref{example3d4}}
\label{fig3D}
\end{figure}
\begin{table}
\centering
{\begin{tabular}{|l|l|l|l|}
\hline
	& Example \ref{example3d2} & Example \ref{example3d3}  & Example \ref{example3d4}\\\hline
number of boundary elements	&30,762	&9,232	&20,838\\
number of triangles 		&65,272	&21,549	&49,900\\
number of vertices 		&18,879	&5,905	&13,629\\
cost 					&2.92e-10	&5.26e-09	&4.84e-06\\
cpu time 				&14,436 s	&2,212 s	&9,680 s\\
\hline
\end{tabular}}
\caption{Mesh details and additional computational results for Examples \ref{example3d2}--\ref{example3d4}}
\label{table}
\end{table}
%
%
\begin{example}[\texttt{L}-block figure]\label{example3d5} Finally, we consider $\Sigma$ as the surface of an \texttt{L}-block figure, and consider the values $\lambda = -1, -7, -10$.
The results of the computations are shown in Figure \ref{fig3D:Lblock}.
The over-all cpu times for the case $\lambda = -7$ and $-10$ are less than $2,200$ sec, while the iteration process was completed after $6,200$ sec for $\lambda = -1$. 
\end{example}
\begin{figure}[hp!]
\centering
\resizebox{0.66\linewidth}{!}{\includegraphics{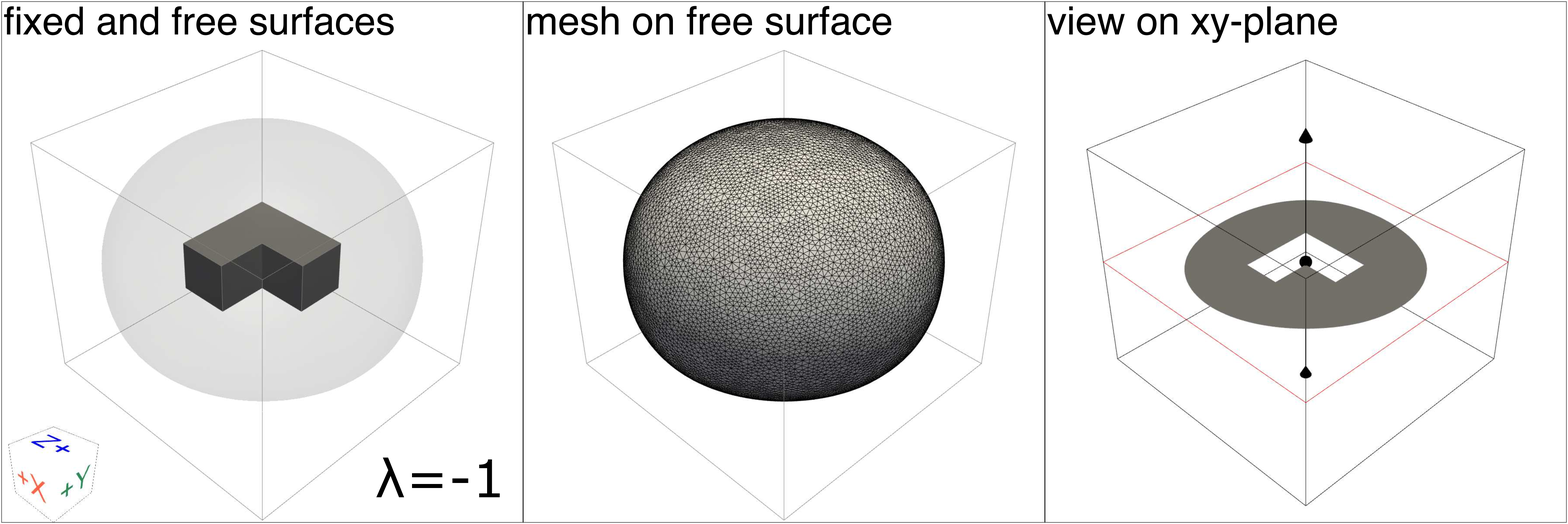}} \resizebox{0.24\linewidth}{!}{\includegraphics{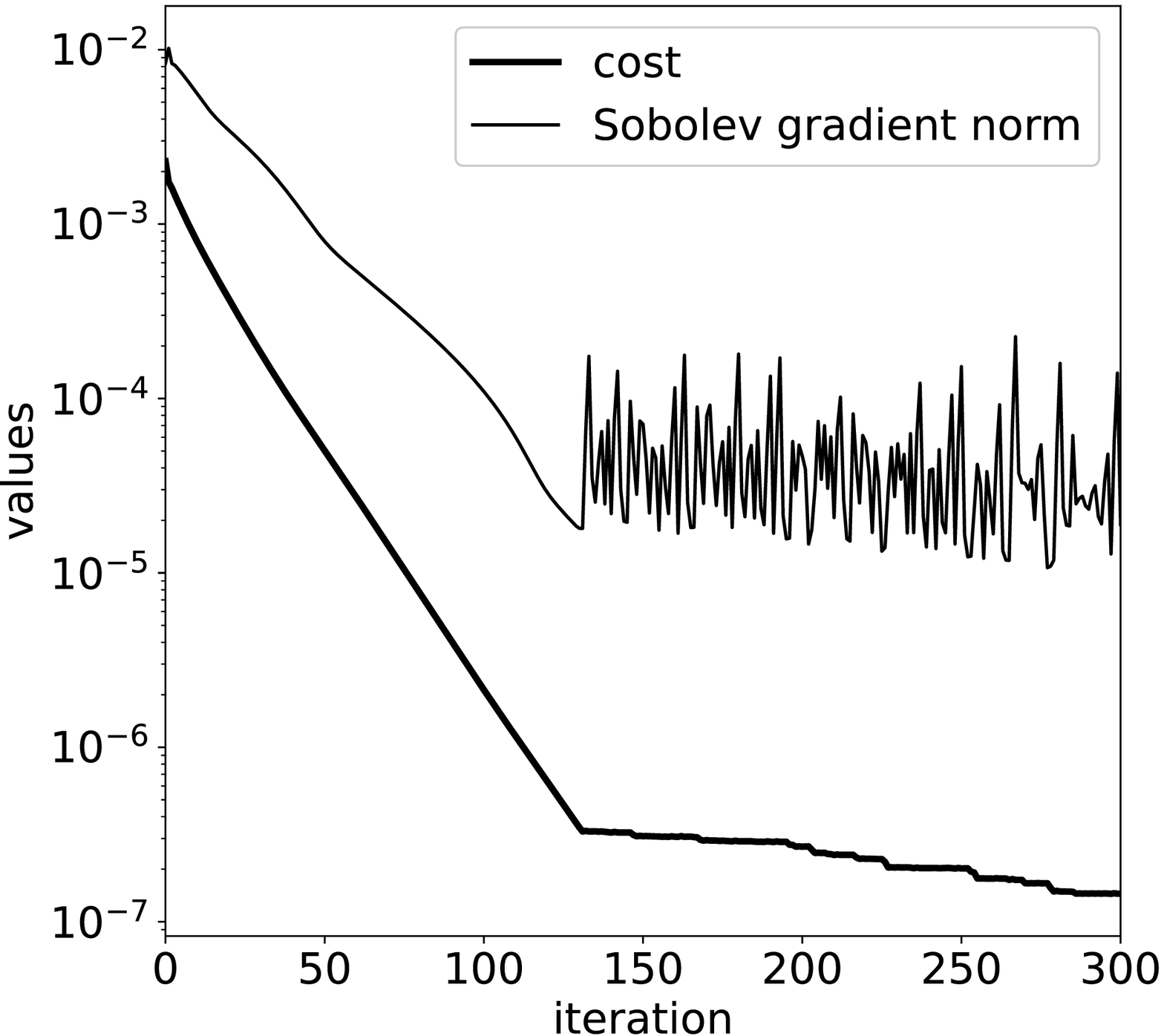}} 
\resizebox{0.66\linewidth}{!}{\includegraphics{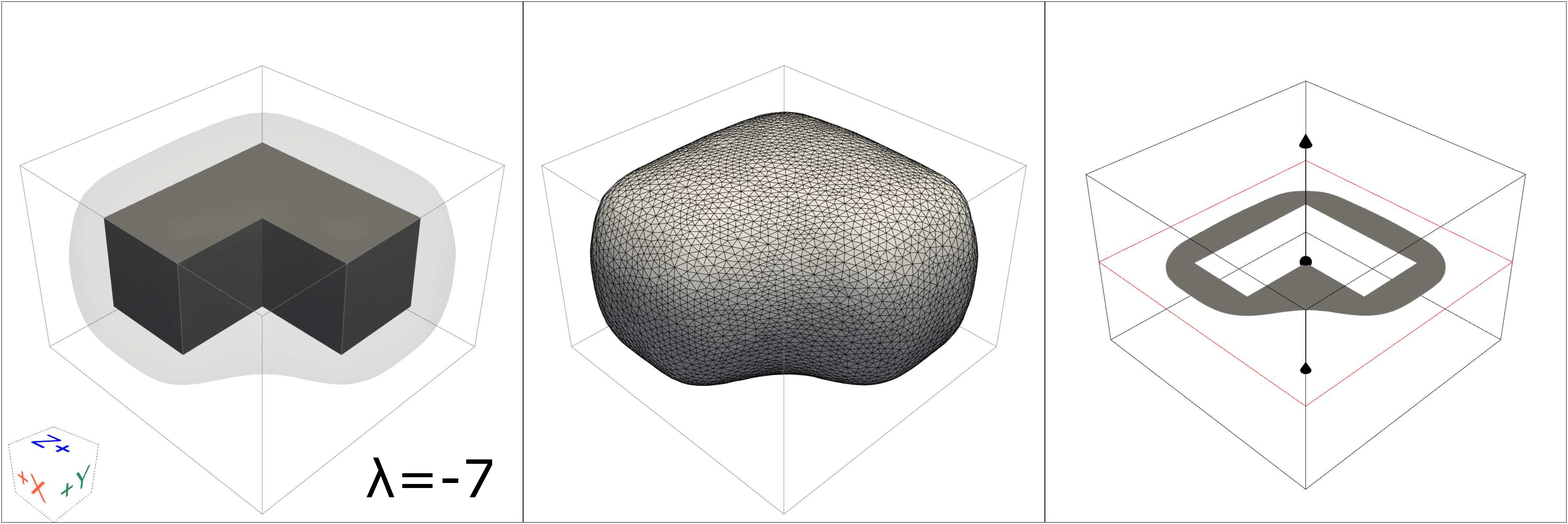}} \resizebox{0.24\linewidth}{!}{\includegraphics{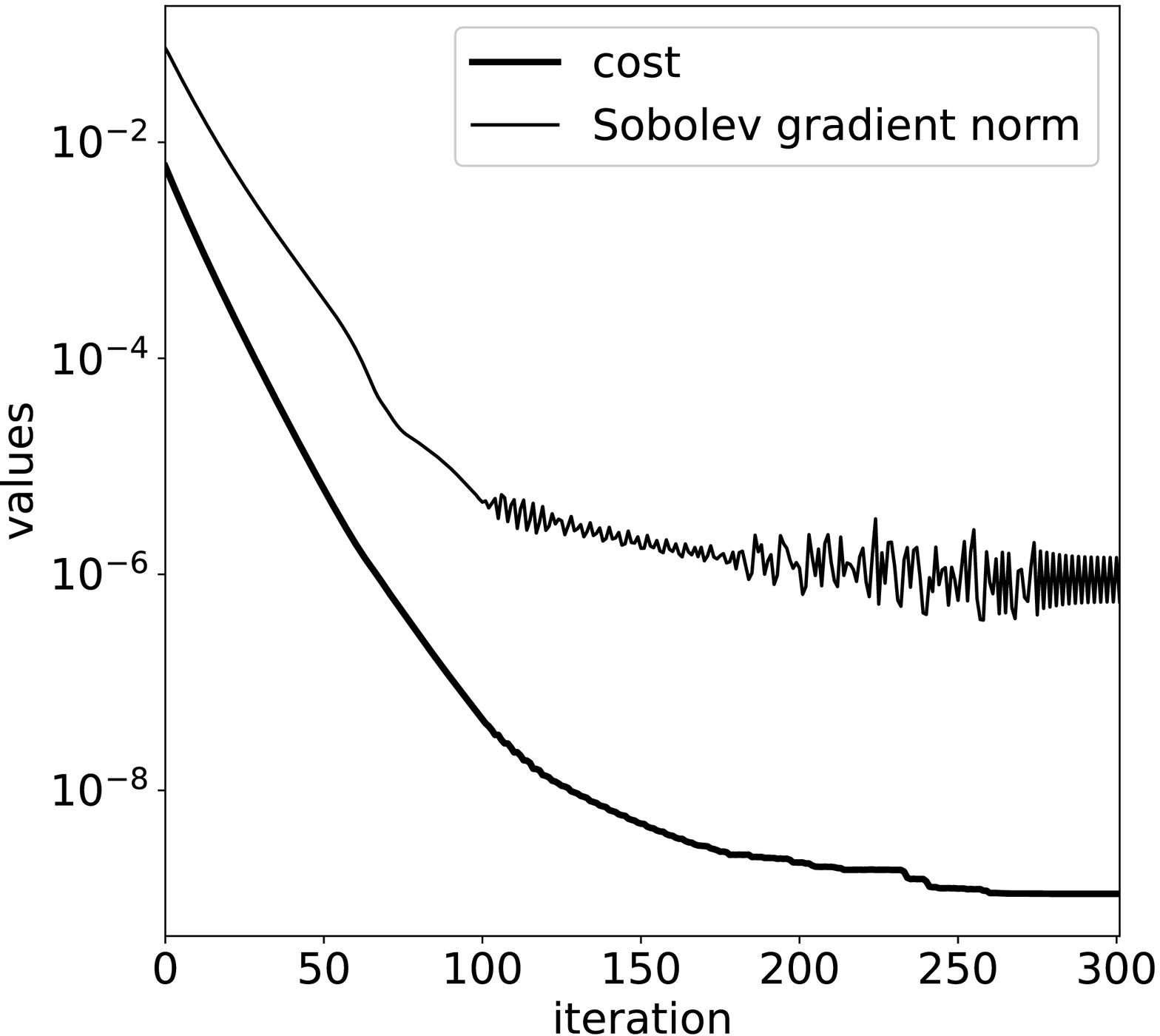}} 
\resizebox{0.66\linewidth}{!}{\includegraphics{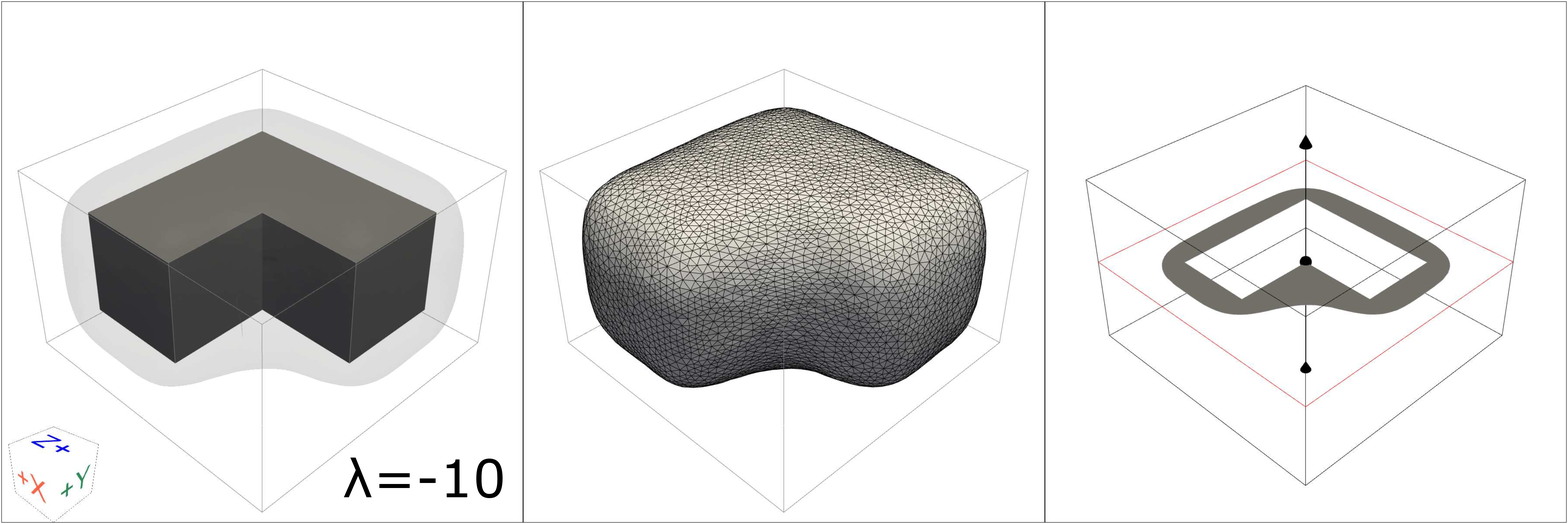}} \resizebox{0.24\linewidth}{!}{\includegraphics{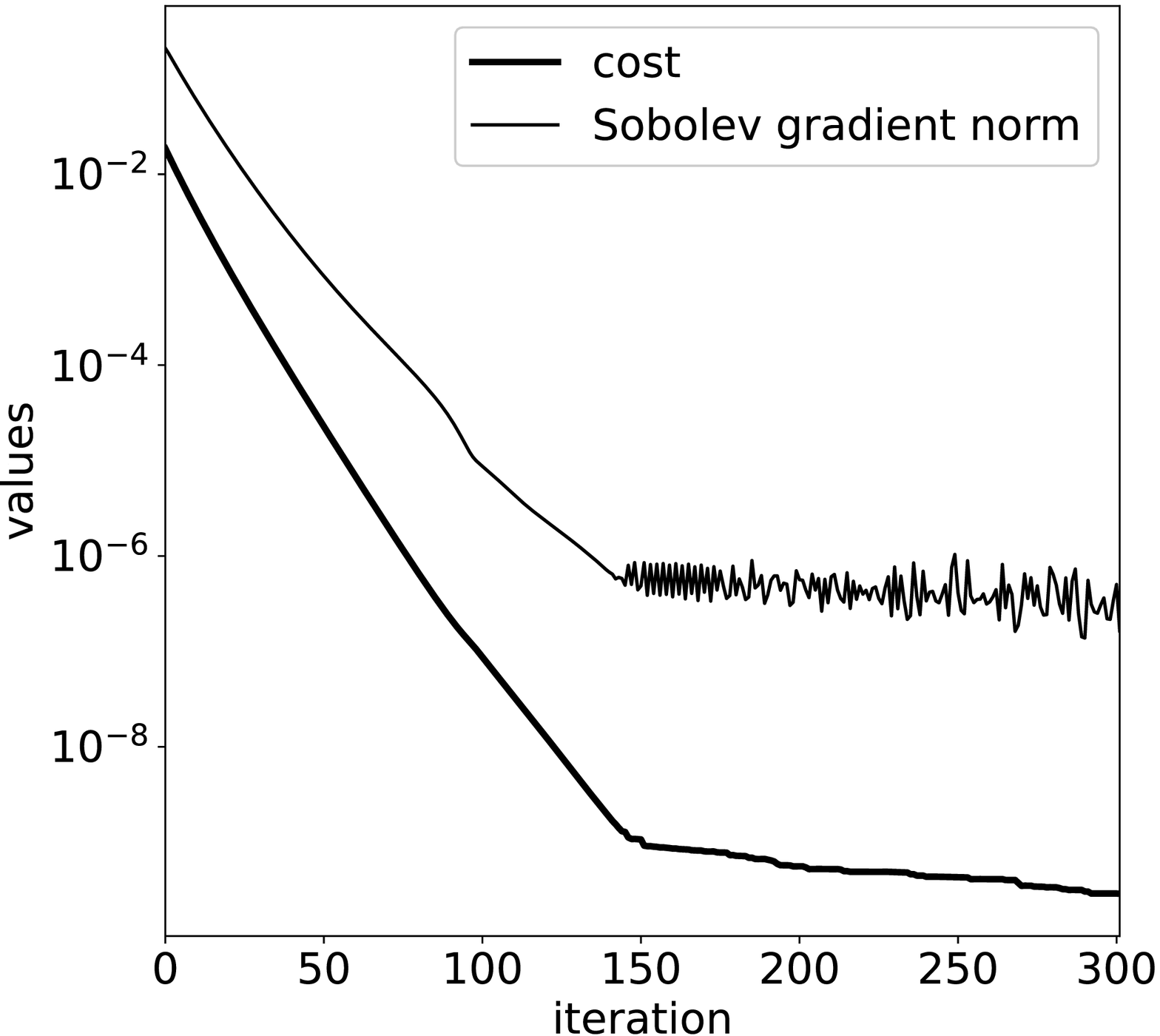}} 
\caption{Computational results for Example \ref{example3d5}}
\label{fig3D:Lblock}
\end{figure}
\section{Conclusions and Future Works}
We have proposed here a complex coupled boundary method in shape optimization framework as a numerical resolution to the exterior Bernoulli problem.
The shape gradient of the cost corresponding to the formulation is computed under a mild regularity assumption on the domain, and by using only the weak form of the equation satisfied by the material derivative of the state problem whose existence is shown in a rigorous manner.
The shape Hessian at a critical shape is also characterized through the chain rule approach under enough smoothness assumption on the domain. 
Also, by the same technique used to derive the shape gradient, the same expression is recomputed, but now with a weaker assumption on the regularity of the domain. 
The aforesaid expression is then analyzed in order to study the algebraic ill-posedness of the proposed method which is done by showing that the Riesz operator associated to the quadratic shape Hessian is compact.
Using the shape gradient information, a Sobolev gradient-based descent scheme was formulated in order to solve the problem numerically via finite element method.
Some numerical experiments in two dimensions are exhibited and are compared with those obtained using the Kohn-Vogelius approach.
Numerical results showed that the new method has some advantages when compared to the conventional KV approach since (1) it requires less overall computational-time-per-iteration when utilized in a Sobolev gradient based algorithm -- at least in the case of the conducted experiments, (2) is less prone to premature convergence under large domain variations, and (3) provides more accurate approximation of the optimal shape for coarser meshes.
Nonetheless, in general, KVM requires less number of iterations than CCBM for the present algorithm to converge.
Even so, for small step sizes, the optimal solutions obtained from the two methods coincide.
The new method is also tested in three dimensions, and various test cases were considered to further illustrate the feasibility and efficiency of the proposed method.
Further application of CCBM in solving free surface problems under shape optimization settings is the subject of our next investigation.
Also, in our future work, we will further study the issue of convergence of an evolving domain to a stationary point under the vanishing of the normal speed of the moving boundary.  \\ 

\textbf{Acknowledgements.}
The author wishes to thank the referees for their helpful comments, suggestions, and remarks which greatly improve the quality of this manuscript.
He is especially grateful to one of the referees for pointing out several inaccuracies and for bringing to his attention references \cite{DambrineLamboley2019} and \cite{Harbrecht2008}.
Lastly, the author acknowledges the support from JST CREST Grant Number JPMJCR2014.

%
%


\appendix

\section{Appendices}
\subsection{Proof of Lemma \ref{lem:tiihonen}}
\label{subsec:shape_derivative_of_the_state}

\begin{proof}[Proof of Lemma \ref{lem:tiihonen}]
	The proof proceeds in three classical steps (sketch here) after formulating the problem onto the fixed domain.
	First, we prove its weak convergence to the material derivative followed then by its convergence in strong sense.
	Afterwards, we deduce the shape derivative of the state using the identity $u'=\dot{u} - \nabla u \cdot \VV$.
	
	{\underline{Step 1.}} We recall from Lemma \ref{lem:transported_problem} the transported problem, and subtract from it the original one to obtain $(1/t)({u^t} - {u}) = 0$ on $\Gamma$, and for all $\overline{v} \in \HHg(\Omega)$, the equation
	\begin{equation}\label{eq:material_derivative_equation}
	\begin{aligned}
	&\intO{A_t \left(\frac{\nabla ({u^t} - {u})}{t}\right) \cdot \nabla \overline{v}} + i \intS{B_t \left( \frac{{u^t} - {u}}{t}\right) \overline{v} } \\
		&\ = \intO{ \left(\frac{\vect{I} - A_t}{t}\right) \nabla {u} \cdot \nabla \overline{v}} + i \intS{ \left(\frac{1 - B_t}{t}\right) {u} \overline{v} } 
					+ \lambda \intS{\left(\frac{B_t -1}{t}\right) \overline{v}}.
	\end{aligned}
	\end{equation}
	By taking $\frac1t({u^t} - {u}) \in \HHg(\Omega)$ as the test function, and using the continuity of the maps $t \mapsto A_t$ and $t \mapsto B_t$ at $t = 0$, and the fact that $A_t$ and $B_t$ are bounded for sufficiently small $t>0$, it can be verified that
	\[
	\vertiii{\frac{{u^t} - {u}}{t}}_{\HHg(\Omega)} 
	\lesssim \max\left\{ \left( \left\|\frac{A_t - \vect{I}}{t} \right\|_{\infty} + \left\|\frac{B_t -1}{t} \right\|_{\infty} \right) \vertiii{\nabla {u}}_{\QQ},  |\Sigma|^{1/2} \left\|\frac{B_t -1}{t} \right\|_{\infty} \right\}.
	\]
	Thus, $\left\{ \frac1t({u^t} - {u}) \right\}$ is bounded in $\HHg(\Omega)$.
	Therefore, the sequence is weakly convergent in $\HHg(\Omega)$ and its weak limit is the material derivative $\dot{u}$ of ${u}$.
	
	{\underline{Step 2.}} By passing to the limit $t \to 0$ in \eqref{eq:material_derivative_equation}, we see that $\dot{u}$ solves equation \eqref{eq:Lagrangian_derivative_of_the_state}.
	We use the said equation to prove the strong convergence in $\HHg(\Omega)$.
	Indeed, setting $v = \frac1t({u^t} - {u}) =: w^t \in \HHg(\Omega)$ in \eqref{eq:material_derivative_equation}, and noting that, on $\Omega$, we have ${u} = 1$ on $\Gamma$ and $\intO{ \nabla {u} \cdot \nabla \overline{v}} + i \intS{ {u} \overline{v} } 
		= \lambda \intS{\overline{v}}$, for all $\overline{v} \in \HHg(\Omega)$, we then get the equation
\begin{align*}
	& \intO{ \nabla  {w}^t \cdot \nabla \overline{w}^t} + i \intS{ {w}^t \overline{w}^t }\\ 
	 	&\ =  \left\{ - \intO{ (A_t - \vect{I} ) \nabla {w}^t \cdot \nabla \overline{w}^t} - i \intS{ (B_t - 1) {w}^t \overline{w}^t }  \right\} \\
		&\quad + \left\{ \intO{ \left(\frac{\vect{I} - A_t}{t}\right) \nabla {u} \cdot \nabla \overline{w}^t} + i \intS{ \left(\frac{1 - B_t}{t}\right) {u} \overline{w}^t } 
			+  \lambda \intS{\left(\frac{B_t -1}{t}\right) \overline{w}^t} \right\}\\
		&\ =: \mathbb{I}_1(t) + \mathbb{I}_2(t).
\end{align*}	
	Using the weak convergence result for the sequence $\{ {w}^t \}$ obtained in the previous step, we easily deduce that $ \lim_{t \to 0} \mathbb{I}_1(t) = 0 $ and	$ \lim_{t \to 0} \mathbb{I}_2(t) = - \intO{ A \nabla {u} \cdot \nabla \overline{\dot{u}}} - i \intS{ ({\operatorname{div}}_{\Sigma} \VV) {u} \overline{\dot{u}} } 
					+ \lambda \intS{ ({\operatorname{div}}_{\Sigma} \VV) \overline{\dot{u}}}$.
	Now, from \eqref{eq:Lagrangian_derivative_of_the_state} in Proposition \ref{prop:umaps}, we conclude that $\mathbb{I}_2(t) \to \intO{ \nabla \dot{u} \cdot \nabla \overline{\dot{u}}} + i \intS{ \dot{u}\overline{\dot{u}} }$ as $t \to 0$.
	This shows the strong convergence of $\nabla {w}^t$ to $\nabla {\dot{u}}$ in $\HHg(\Omega)$.
	The equivalence of norms between the $\HHg(\Omega)$-norm and the usual $\HH^{1}(\Omega)$-norm implies the strong convergence of ${w}^t$ to ${\dot{u}}$ in $\HHg(\Omega)$.
	
	{\underline{Step 3.}} In the last step, we deduce the equations satisfied by the shape derivative of $u$ using the identity $u' = \dot{u} - \VV \cdot \nabla u$.
	Let us first note that $\dot{u} \in \HHg(\Omega)$.
	So, $u' = \dot{u} - \VV \cdot \nabla u = 0$ on $\Gamma$ because $\VV = 0$ on $\Gamma$.
	Next, we let the trace of the normal derivative of the (complex conjugate of the) test function ${v}$ on $\Sigma$ in equation \eqref{eq:Lagrangian_derivative_of_the_state} be zero.
	Then, by expansion \eqref{eq:expansion}, and integration by parts, we get
	$-\intO{\nabla \dot{u} \cdot \nabla \overline{v} } = \intO{ \Delta {u} (\VV \cdot \nabla \overline{v})} + \intO{ \Delta \overline{v} (\VV \cdot \nabla {u})}  
		= -\intO{\nabla (\VV \cdot \nabla {u}) \cdot \nabla \overline{v} }$,
	for any smooth function ${v}$ with compact support on $\Omega$ and such that $\dn{\overline{v}} = 0$ on $\Sigma$.
	Since $-\Delta {u} = 0$ in $\Omega$, we easily find that $\intO{\nabla (\dot{u} - \VV \cdot \nabla {u}) \cdot \nabla \overline{v} } = -\intO{ (\Delta {u}')\overline{v} } = 0$. 
	Varying ${v}$, we obtain $\Delta {u}' = 0$ in $\Omega$. 
	Let us now choose ${v} \in \HH^2(\Omega) \cap \HHg(\Omega)$ such that $\dn{\overline{v}} = 0$ on $\Sigma$.
	Applying Green's theorem and the tangential Green's formula \eqref{eq:tangential_Greens_formula} to equation \eqref{eq:Lagrangian_derivative_of_the_state}, and noting that $\dn{u} + i{u} = \lambda$ on $\Sigma$, we obtain $\intS{ (\dn{{u}'} + i {u}') \overline{v} }  = - \intS{ (\nabla {\overline{v}} \cdot \nabla {u}) \Vn } - \intS{ \overline{v} \left[ i(\dn{u} + \kappa {u}) - \lambda \kappa \right] \Vn }$.
	Since $\dn{\overline{v}} = 0$ on $\Sigma$, then we can write $\intS{ (\nabla {\overline{v}} \cdot \nabla {u}) \Vn } = \intS{ (\nabla_{\Sigma} {\overline{v}} \cdot \nabla_{\Sigma} {u}) \Vn } =  - \intS{ \overline{v} \operatorname{div}_{\Sigma}( \Vn \nabla_{\Sigma} {u}) }$,
	where the second equality follows again from the tangential Green's formula \eqref{eq:tangential_Greens_formula} together with the fact that $(\Vn \nabla_{\Sigma} {u}) \cdot \nn = 0$ on $\Sigma$.
	Putting the computed identity to the previous equation above leads to
	\[
	\intS{ (\dn{{u}'} + i {u}') \overline{v} } 
	= \intS{ {\overline{v}} \left[ \operatorname{div}_{\Sigma}( \Vn \nabla_{\Sigma} {u})  - i(\dn{u} + \kappa {u}) \Vn + \lambda \kappa \Vn \right]  }.
	\]
	Varying ${v}$ yields the equation $\dn{{u}'} + i {u}' = \operatorname{div}_{\Sigma}( \Vn \nabla_{\Sigma} {u})  - i(\dn{u} + \kappa {u}) \Vn + \lambda \kappa \Vn$ on $\Sigma$.
	After collecting all equations for the shape derivative ${u}'$, we finally obtain \eqref{eq:shape_derivative_of_the state}.
\end{proof}
\subsection{Alternative computation of the shape gradient}
\label{subsec:shape_derivatives_of_the_cost}
We give below the computation of the shape gradient \eqref{eq:shape_gradient} under a $\mathcal{C}^{2,1}$ regularity assumption on the domain.
With the given regularity, the existence of the shape derivative of the state is guaranteed and the shape gradient of the cost is easily obtained using Hadamard's domain differentiation formula \eqref{eq:Hadamard_domain_formula} -- assuming the perturbation of $\Omega$ preserves its regularity. 
\begin{proposition}
	Let $\Omega \in \mathcal{C}^{2,1}$ and $\VV \in \sfTheta^2$.
	Then, the shape derivative of $J$ at $\Omega$ along $\VV$ is given by $dJ(\Omega)[\VV] = \intS{\GG{\Vn}}$, where $G$ is the expression in \eqref{eq:shape_gradient}.
\end{proposition}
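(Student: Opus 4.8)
The plan is to exploit the stronger $\mathcal{C}^{2,1}$ hypothesis, under which the state $u$ is shape differentiable by Lemma \ref{lem:tiihonen}; I would therefore work directly with the Eulerian derivative $\up$ and its governing boundary value problem \eqref{eq:shape_derivative_of_the state}, rather than with the Lagrangian derivative as in Theorem \ref{prop:Jmaps}. Throughout, $\VV \in \sfTheta^2$ vanishes on $\Gamma$, so $\Vn = 0$ there and every boundary integral collapses to an integral over the free boundary $\Sigma$.

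First I would differentiate $J(\Omega) = \frac12\intO{|\ui|^2}$ using Hadamard's domain differentiation formula \eqref{eq:Hadamard_domain_formula} with integrand $f(t,x) = \frac12|\ui|^2$. Since the $t$-dependence enters only through $\ui$, this gives
\begin{equation*}
	{\operatorname{\mathnormal{d}}}J(\Omega)[\VV] = \intO{\ui \uip} + \frac12 \intS{|\ui|^2 \Vn},
\end{equation*}
where $\uip = \Im \up$. The boundary term already supplies the $\frac12|\ui|^2$ contribution to $\GG$, so the crux is to convert the volume term $\intO{\ui \uip}$ into a boundary integral over $\Sigma$.

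To remove $\uip$ I would apply the adjoint method. Testing the weak adjoint problem \eqref{eq:adjoint_equation} with $\varphi = \up \in \HHg(\Omega)$ yields $\intO{\ui\,\overline{\up}} = \intO{\nabla p \cdot \nabla \overline{\up}} - i\intS{p\,\overline{\up}}$, while the weak form of \eqref{eq:shape_derivative_of_the state}, tested with $p \in \HHg(\Omega)$, reads $\intO{\nabla \up \cdot \nabla \overline{p}} + i\intS{\up\,\overline{p}} = \intS{\Upsilon(u)[\Vn]\,\overline{p}}$. Conjugating the latter and comparing the two left-hand sides, which coincide term by term, produces the clean identity $\intO{\ui\,\overline{\up}} = \intS{\overline{\Upsilon(u)[\Vn]}\, p}$. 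As $\ui$ is real and $\overline{\up} = \urp - i\uip$, the imaginary part of the left side is $-\intO{\ui\uip}$, whence $\intO{\ui\uip} = -\,\Im\!\left(\intS{\overline{\Upsilon(u)[\Vn]}\, p}\right)$.

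The final and most delicate step is to expand this boundary integral and extract its imaginary part. I would substitute the explicit form of $\Upsilon(u)[\Vn]$ from Lemma \ref{lem:tiihonen}, integrate the $\operatorname{div}_{\Sigma}(\Vn\nabla_{\Sigma} u)$ term by parts over the closed surface $\Sigma$ via the tangential Green's formula \eqref{eq:tangential_Greens_formula} (the curvature and boundary contributions dropping out because $\Vn\nabla_{\Sigma} u$ is tangential), and then decompose $u = \ur + i\ui$ and $p = \vr + i\vi$. Tracking the imaginary parts of $-\Vn\,\nabla_{\Sigma}\overline{u}\cdot\nabla_{\Sigma} p$, of $i(\dn{\overline{u}}+\kappa\overline{u})\Vn p$, and of $\lambda\kappa\Vn p$ reproduces precisely the bracketed expression in \eqref{eq:shape_gradient}. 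Adding this to the Hadamard boundary term then yields ${\operatorname{\mathnormal{d}}}J(\Omega)[\VV] = \intS{\GG\Vn}$. I expect this last bookkeeping to be the main obstacle: one must orchestrate the real/imaginary splitting together with the tangential integration by parts without sign errors, and implicitly use that $\up, p \in \HHg(\Omega)$ with sufficient trace regularity on $\Sigma$ (the smoothness of $p$ coming from Remark \ref{rem:higher_regularity_of_the_adjoint}) for all the tangential gradients and normal traces to be well defined.
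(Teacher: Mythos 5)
Your proposal is correct and follows essentially the same route as the paper's own proof: Hadamard's domain differentiation formula \eqref{eq:Hadamard_domain_formula} for the two terms, the adjoint pairing of $p$ with $\up$ via \eqref{eq:adjoint_equation} and \eqref{eq:shape_derivative_of_the state} to obtain $\intO{\ui \up} = \intS{\overline{p}\,\Upsilon(u)[\Vn]}$, and extraction of the imaginary part (with the tangential Green's formula \eqref{eq:tangential_Greens_formula} handling the $\operatorname{div}_{\Sigma}$ term) to recover the bracket in \eqref{eq:shape_gradient}. Your conjugation bookkeeping and sign tracking are consistent with the paper's identity, so the argument goes through as written.
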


\begin{proof}
	Let us assume that $\Omega$ is of class $\mathcal{C}^{2,1}$ and $\VV \in \sfTheta^{2}$. 
	By classical regularity theory, $\ur$, $\ui \in H^3(\Omega)$ and so, we can apply formula \eqref{eq:Hadamard_domain_formula} to obtain -- noting that $\VV\big|_{\Gamma} = 0$ -- the derivative 
	${\operatorname{\mathnormal{d}}}J(\Omega)[\VV]  = \intO{\ui \uip } + \frac12 \intS{ |\ui|^2 \Vn }=:\mathbb{I}_1+\mathbb{I}_2$.
We focus on rewriting $\mathbb{I}_1$ through the adjoint method.
To this end, we consider the adjoint problem \eqref{eq:adjoint_system}, multiply it by $\up \in \HHg(\Omega)$, and then apply integration by parts to obtain $\intO{ \nabla \overline{p} \cdot \nabla \up} + i \intS{\overline{p} \up } = \intO{\ui \up}$.
We do the same on \eqref{eq:shape_derivative_of_the state} with the multiplier $\overline{p} \in \HHg(\Omega)$ to obtain $\intO{\nabla \up \cdot \nabla \overline{p}} + i \intS{\up \overline{p}} = \intS{\overline{p} \Upsilon(u)[\Vn]}$.
The last two equations lead us to $\intO{\ui \up} = \intS{\overline{p} \Upsilon(u)[\Vn]}$.
Comparing the respective real and imaginary parts on both sides of this equation will give us the form for $\mathbb{I}_1$, which, upon adding to $\mathbb{I}_2$ finally yield the desired expression.
\end{proof}
\end{document}